\def\be{\begin{equation}}
\def\ee{\end{equation}}
\def\ba{\begin{array}}
\def\ea{\end{array}}
\def\bq{\begin{eqnarray}}
\def\eq{\end{eqnarray}}
\def\beq{\begin{eqnarray*}}
\def\eeq{\end{eqnarray*}}
\def\bi{\begin{itemize}}
\def\ei{\end{itemize}}
\def\bc{\begin{center}}
\def\ec{\end{center}}
\def\bdf{\begin{definition}}
\def\edf{\end{definition}}
\def\bal{\begin{aligned}}
\def\eal{\end{aligned}}
\def\bth{\begin{theorem}}
\def\eth{\end{theorem}}
\def\brm{\begin{remark}}
\def\erm{\end{remark}}
\def\la{\lambda}
\def\l{\ell}
\def\o{\omega}
\def\p{\partial}
\def\deg{\mbox{deg}}
\def\ov{\overline}
\newcommand{\is}{\mbox{\scriptsize{\sl \$}}}
\newcommand{\R}{\mathbb{R}}
\newcommand{\C}{\mathbb{C}}
\DeclareMathOperator{\SC}{\mathbb{S}}
\newcommand{\N}{\mathbb{N}}
\newcommand{\PP}{\mathbb{P}}
\title{The geometry of quadratic polynomial differential systems with a finite and an infinite saddle--node $(A,B)$}
\author{Joan C. Art\'es}
\address{Departament de Matem\`{a}tiques, Universitat Aut\`{o}noma de
Barcelona, 08193 Bellaterra, Barcelona, Spain\\E--mail: artes@mat.uab.cat}
\author{Alex C. Rezende$^1$ and Regilene D. S. Oliveira$^2$}
\address{Departamento de Matem\'{a}tica,
Universidade de S\~{a}o Paulo, \\13566--590, S\~{a}o Carlos,
S\~{a}o Paulo, Brazil, \\E--mail:
$^1$arezende@icmc.usp.br,
$^2$regilene@icmc.usp.br}
\date{}
\abstract{
Planar quadratic differential systems occur in many areas of applied mathematics. Although more than one thousand papers have been
written on these systems, a complete understanding of this family is still missing. Classical problems, and in particular, Hilbert's 16th problem \cite{Hilbert:1900,Hilbert:1902}, are still open for this family. Our goal is to make a global study of the family $Q\overline{sn}\overline{SN}$ of all real quadratic polynomial differential systems which have a finite semi--elemental saddle--node and an infinite saddle--node formed by the collision of two infinite singular points. This family can be divided into three different subfamilies, all of them with the finite saddle--node in the origin of the plane with the eigenvectors on the axes and $(A)$ with the infinite saddle--node in the horizontal axis, $(B)$ with the infinite saddle--node in the vertical axis and $(C)$ with the infinite saddle--node in the bisector of the first and third quadrants. These three subfamilies modulo the action of the affine group and time homotheties are three--dimensional and we give their bifurcation diagram with respect to a normal form, in the three--dimensional real space of the parameters of these forms. In this paper we provide the complete study of the geometry of the first two families, $(A)$ and $(B)$. The bifurcation diagram for the subfamily $(A)$ yields 29 phase portraits for systems in $Q\overline{sn}\overline{SN}(A)$ counting phase portraits with and without limit cycles, while the bifurcation diagram for the subfamily $(B)$ yields 16 phase portraits for systems in $Q\overline{sn}\overline{SN}(B)$ under the same conditions. Case $(C)$ will yield quite more cases and will have an independent paper in short. Algebraic invariants are used to construct the bifurcation set. The phase portraits are represented on the Poincar\'{e} disk. The bifurcation set of $Q\overline{sn}\overline{SN}(A)$ is not only algebraic due to the presence of a surface found numerically. All points in this surface correspond to connections of separatrices.}
\begin{document}

\maketitle\clearpage

\section{Introduction, brief review of the literature and statement of results}\label{sec:int}

\indent Here we call \textit{quadratic differential systems} or simply \textit{quadratic systems}, differential systems of the form
    \be \ba{lcccl}
            \dot{x}&=& p(x,y), \\
            \dot{y}&=& q(x,y), \\
    \ea \label{eq:qs} \ee
where $p$ and $q$ are polynomials over $\R$ in $x$ and $y$ such that the max(\deg$(p)$,\deg$(q))=2$. To such a system one can always associate the quadratic vector field
    \be
        X=p\frac{\p}{\p x}+q\frac{\p}{\p y} \label{eq:qvf},
    \ee
as well as the differential equation
    \be
        qdx-pdy=0.
        \label{eq:de}
    \ee
The class of all quadratic differential systems (or quadratic vector fields) will be denoted by $QS$.

We can also write system \eqref{eq:qs} as
    \be \ba{lcccl}
        \dot{x} & = p_0+p_{1}(x,y)+p_{2}(x,y)=p(x,y), \\
        \dot{y} & = q_0+q_{1}(x,y)+q_{2}(x,y)=q(x,y), \\
    \ea \label{2l1} \ee
where $p_i$ and $q_i$ are homogeneous polynomials of degree $i$ in $(x,y)$ with real coefficients with $p_{2}^2+q_{2}^2 \neq 0$.

Even after hundreds of studies on the topology of real planar quadratic vector fields, it is kind of impossible to outline a complete characterization of their phase portraits, and attempting to topologically classify them, which occur rather often in applications, is quite a complex task. This family of systems depends on twelve parameters, but due to the action of the group $G$ of real affine transformations and time homotheties, the class ultimately depends on five parameters, but this is still a large number.

This paper is aimed at studying the class $Q\overline{sn}\overline{SN}$ of all quadratic systems possessing a finite saddle--node $\overline{sn}_{(2)}$ and an infinite saddle--node of type $\overline{\!{0\choose 2}\!\!}\ SN$. The finite saddle--node is a semi--elemental point whose neighborhood is formed by the union of two hyperbolic sectors and one parabolic sector. By a semi--elemental point we understand a point with zero determinant of its Jacobian, but only one eigenvalue zero. These points are known in classical literature as semi--elementary, but we use the term semi--elemental introduced in \cite{Artes-Llibre-Schlomiuk-Vulpe:2012} as part of a set of new definitions more deeply related to singular points, their multiplicities and, specially, their Jacobian matrices. In addition, an infinite saddle--node of type $\overline{\!{0\choose 2}\!\!}\ SN$ is obtained by the collision of an infinite saddle with an infinite node. There is another type of infinite saddle--node denoted by $\overline{\!{1\choose 1}\!\!}\ SN$ which is given by the collision of a finite antisaddle (respectively, finite saddle) with an infinite saddle (respectively, infinite node) and which will appear in some of the phase portraits.

The condition of having a finite saddle--node of all the systems in $Q\overline{sn}\overline{SN}$ implies that these systems may have up to two other finite points.

For a general framework of study of the class of all quadratic differential systems we refer to the article of Roussarie and Schlomiuk \cite{Roussarie-Schlomiuk:2002}.

In this study we follow the pattern set out in \cite{Artes-Llibre-Schlomiuk:2006}. As much as possible we shall try to avoid repeating technical sections which are the same for both papers, referring to the paper mentioned just above, for more complete information.

In this article we give a partition of the classes $Q\overline{sn}\overline{SN}(A)$ and $Q\overline{sn}\overline{SN}(B)$. The first class $Q\overline{sn}\overline{SN}(A)$ is partitioned into 66 parts: 23 three--dimensional ones, 31 two--dimensional ones, 11 one--dimensional ones and 1 point. This partition is obtained by considering all the bifurcation surfaces of singularities, one related to the presence of another invariant straight line and one related to connections of separatrices, modulo ``islands'' (see Sec. \ref{sec:islands}). The second class $Q\overline{sn}\overline{SN}(B)$ is partitioned into 30 parts: 9 three--dimensional ones, 14 two--dimensional ones, 6 one--dimensional ones and 1 point, which are all algebraic and obtained by considering all the bifurcation surfaces.

A \textit{graphic} as defined in \cite{Dumortier-Roussarie-Rousseau:1994} is formed by a finite sequence of points $r_1,r_2,\ldots,r_n$ (with possible repetitions) and non--trivial connecting orbits $\gamma_i$ for $i=1,\ldots,n$ such that  $\gamma_i$ has $r_i$ as $\alpha$--limit set and $r_{i+1}$ as $\omega$--limit set for $i<n$ and $\gamma_n$ has $r_n$ as  $\alpha$--limit set and $r_{1}$ as $\omega$--limit set. Also normal orientations $n_j$ of the non--trivial orbits must be coherent  in the sense that if $\gamma_{j-1}$ has left--hand orientation then so does $\gamma_j$. A \textit{polycycle} is a graphic which has  a Poincar\'{e} return map. For more details, see \cite{Dumortier-Roussarie-Rousseau:1994}.

\onecolumn
\begin{figure}
\psfrag{V1}{$V_{1}$}   \psfrag{V3}{$V_{3}$}   \psfrag{V6}{$V_{6}$}
\psfrag{V9}{$V_{9}$}   \psfrag{XX}{$V_{11}$}  \psfrag{X12}{$V_{12}$}
\psfrag{X14}{$V_{14}$} \psfrag{X15}{$V_{15}$} \psfrag{X16}{$V_{16}$}
\psfrag{1S1}{$1S_{1}$} \psfrag{1S2}{$1S_{2}$} \psfrag{1S4}{$1S_{4}$}
\psfrag{1S5}{$1S_{5}$} \psfrag{3S1}{$3S_{1}$} \psfrag{3S2}{$3S_{2}$}
\psfrag{3S3}{$3S_{3}$} \psfrag{3S4}{$3S_{4}$} \psfrag{5S1}{$5S_{1}$}
\psfrag{5S2}{$5S_{2}$} \psfrag{5S3}{$5S_{3}$} \psfrag{7S1}{$7S_{1}$}
\psfrag{8S1}{$8S_{1}$} \psfrag{8S4}{$8S_{4}$} \psfrag{1.2L2}{$1.2L_{2}$}
\psfrag{1.8L1}{$1.8L_{1}$} \psfrag{2.3L1}{$2.3L_{1}$} \psfrag{3.5L1}{$3.5L_{1}$}
\psfrag{P1}{$P_{1}$}
\centerline{\psfig{figure=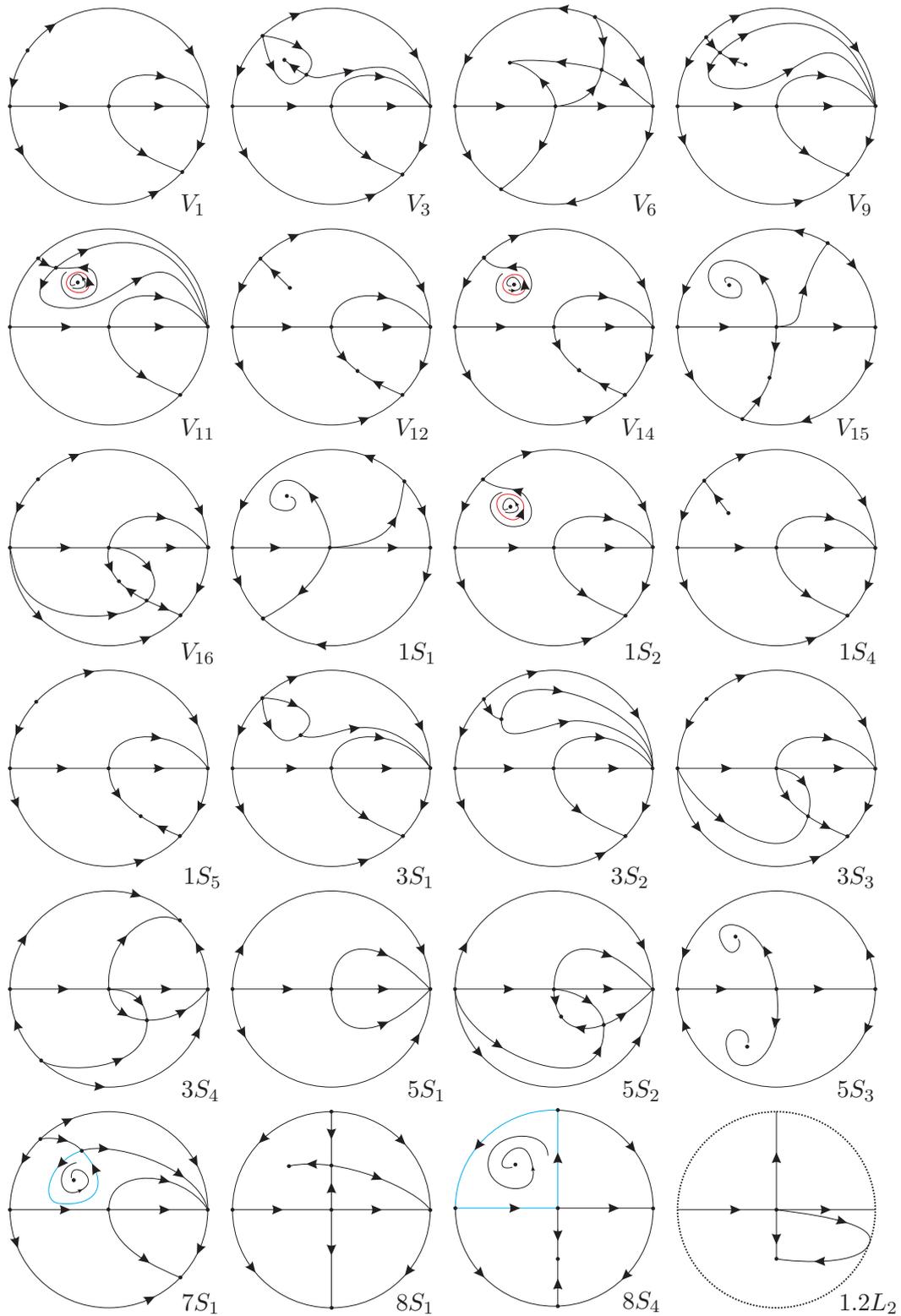,width=14cm}} \centerline {}
\caption{\small \label{fig:phase_a1} Phase portraits for quadratic vector fields with a finite saddle--node $\overline{sn}_{(2)}$ and an infinite saddle--node of type $\overline{\!{0\choose 2}\!\!}\ SN$ in the horizontal axis.}
\end{figure}
\twocolumn

\onecolumn
\begin{figure}
\psfrag{1.8L1}{$1.8L_{1}$} \psfrag{2.3L1}{$2.3L_{1}$} \psfrag{3.5L1}{$3.5L_{1}$}
\psfrag{3.8L1}{$3.8L_{1}$} \psfrag{P1}{$P_{1}$}
\centerline{\psfig{figure=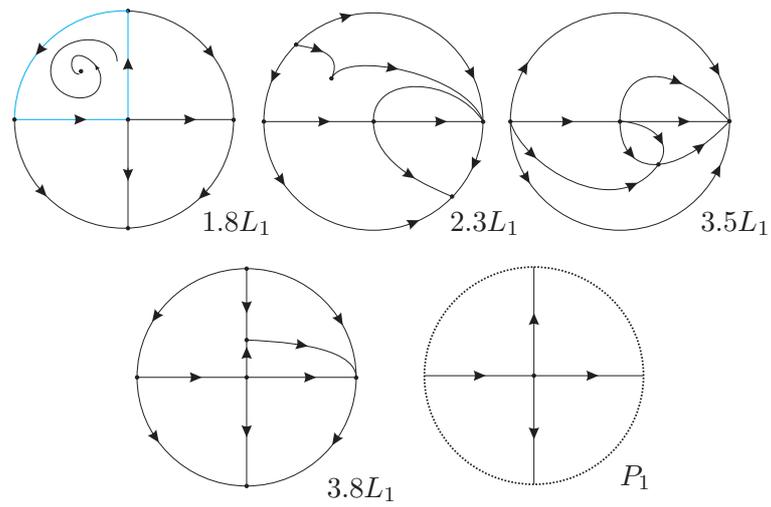,width=10cm}} \centerline {}
\caption{\small \label{fig:phase_a2} Continuation of Fig. \ref{fig:phase_a1}.}
\end{figure}

\begin{figure}
\psfrag{V1}{$V_{1}$}   \psfrag{V2}{$V_{2}$}   \psfrag{V3}{$V_{3}$} \psfrag{V6}{$V_{6}$}
\psfrag{V7}{$V_{7}$}   \psfrag{1S1}{$1S_{1}$} \psfrag{1S2}{$1S_{2}$}
\psfrag{1S3}{$1S_{3}$} \psfrag{1S4}{$1S_{4}$} \psfrag{2S1}{$2S_{1}$}
\psfrag{5S1}{$5S_{1}$} \psfrag{5S1}{$5S_{1}$} \psfrag{5S3}{$5S_{3}$}
\psfrag{1.2L1}{$1.2L_{1}$} \psfrag{1.2L2}{$1.2L_{2}$} \psfrag{1.5L1}{$1.5L_{1}$}
\psfrag{P1}{$P_{1}$}
\centerline{\psfig{figure=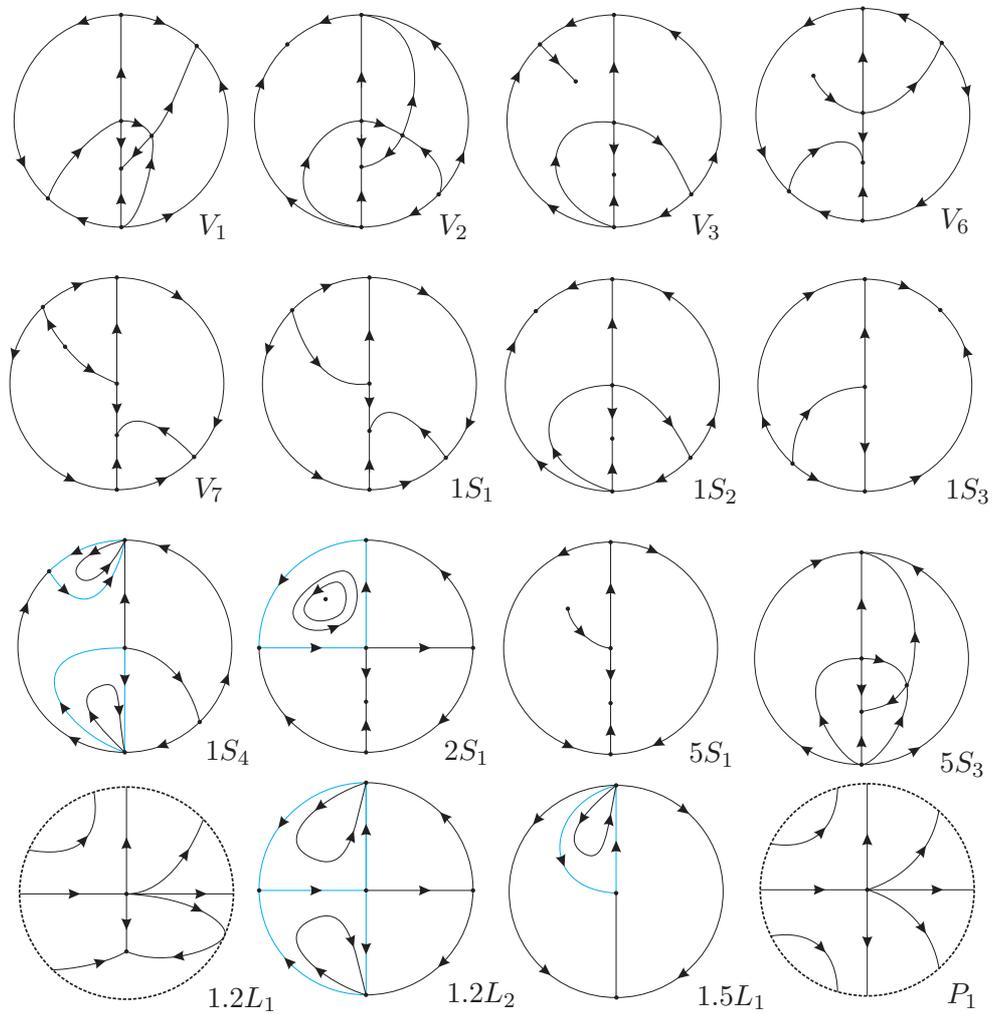,width=13cm}} \centerline {}
\caption{\small \label{fig:phase_b} Phase portraits for quadratic vector fields with a finite saddle--node $\overline{sn}_{(2)}$ and an infinite saddle--node of type $\overline{\!{0\choose 2}\!\!}\ SN$ in the vertical axis.}
\end{figure}
\twocolumn

\begin{theorem} \label{th:1.1} There exist 29 distinct phase portraits for the quadratic vector fields having a finite saddle--node $\overline{sn}_{(2)}$ and an infinite saddle--node of type $\overline{\!{0\choose 2}\!\!}\ SN$ located in the direction defined by the eigenvector with null eigenvalue (class $Q\overline{sn}\overline{SN}(A)$). All these phase portraits are shown in Figs. \ref{fig:phase_a1} and \ref{fig:phase_a2}. Moreover, the following statements hold:
\begin{enumerate}[(a)]
\item The manifold defined by the eigenvector with null eigenvalue is always an invariant straight line under the flow;
\item There exist three phase portraits with limit cycles, and they are in the regions $V_{11}$, $V_{14}$ and $1S_{2}$;
\item There exist three phase portraits with graphics, and they are in the regions $7S_{1}$, $8S_{4}$ and $1.8L_{1}$.
\end{enumerate}
\end{theorem}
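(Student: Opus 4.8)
The plan is to follow the now--standard pipeline for the topological classification of low--dimensional subfamilies of $QS$, adapting the treatment of \cite{Artes-Llibre-Schlomiuk:2006}. First I would bring the class $Q\overline{sn}\overline{SN}(A)$ to a normal form: starting from \eqref{2l1}, I place the finite saddle--node $\overline{sn}_{(2)}$ at the origin with its eigenvectors along the coordinate axes, so that the linear part is diagonal with exactly one zero eigenvalue, and I impose that the infinite saddle--node $\overline{\!{0\choose 2}\!\!}\ SN$ lies on the horizontal axis of the Poincar\'e disk, i.e.\ precisely in the direction of the zero eigenvector. The residual action of the affine group together with time rescaling is then used to normalize one more coefficient, leaving a three--parameter family; this is the normal form mentioned in the statement. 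Statement (a) is essentially a by--product of this normalization: the saddle--node condition, combined with the prescribed location of the infinite singular point, forces the line spanned by the zero--eigenvector direction to divide both $p$ and $q$, hence to be invariant under the flow. I would isolate this as a short lemma preceding the bulk of the argument.

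Next I would construct the bifurcation set, following the abstract's indication that it is assembled from algebraic invariants. Using the affine comitants and invariant polynomials that control the nature and coincidences of finite and infinite singularities, the appearance of additional invariant straight lines, and the existence of weak foci, I would write down the algebraic bifurcation surfaces in the three--parameter space: (i) discriminant--type surfaces on which finite singularities collide or change index, and the surface on which a finite point escapes to infinity; (ii) the surface on which a second invariant straight line appears; (iii) the Hopf surface on which a finite antisaddle becomes a weak focus, together with the sign of its first Liapunov quantity. Their complement is a finite union of connected cells — this yields the partition into $23+31+11+1$ pieces announced in Sec.~\ref{sec:int}. Crucially, there is in addition a non--algebraic surface, located numerically, consisting of parameters for which two separatrices connect; I would argue for its existence by a rotated--vector--field argument (along a suitable parameter arc the field rotates monotonically, so a connection present on one side and absent on the other forces a unique connection parameter in between) and then describe it numerically, which is exactly the reason the full bifurcation set of $Q\overline{sn}\overline{SN}(A)$ fails to be algebraic.

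With the partition fixed, the heart of the proof is the cell--by--cell analysis. For one representative parameter value in each open three--dimensional cell I would determine all finite and infinite singular points and their local phase portraits (using the classification of semi--elemental and nilpotent singularities and the Poincar\'e compactification for the points at infinity), reconstruct the global separatrix configuration on the Poincar\'e disk, and count limit cycles; by continuity the phase portrait is constant on each cell, and the lower--dimensional strata (the surfaces $S_i$, curves $L_i$ and the point $P_1$) are handled by the same local--plus--global bookkeeping together with the adjacency relations between cells. Limit cycles are produced by the Hopf analysis of the previous step and their number is controlled by rotated--vector--field monotonicity and the fact that in $QS$ all limit cycles surround a single focus, which pins the limit--cycle portraits to $V_{11}$, $V_{14}$ and $1S_2$, giving (b). Graphics are read off from the separatrix--connection surface and its traces: the saddle loops and heteroclinic graphics occurring there provide the portraits in $7S_1$, $8S_4$ and $1.8L_1$, giving (c). Finally, comparing all the resulting configurations up to homeomorphism respecting the direction of the flow collapses the list to the $29$ distinct phase portraits displayed in Figs.~\ref{fig:phase_a1} and \ref{fig:phase_a2}.

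I expect the main obstacle to be everything that is not algebraic. The separatrix--connection surface must be shown to be a genuine codimension--one object, the connection parameter along each relevant one--parameter family must be shown to be unique, and one must rule out hidden ``islands'' of further connections elsewhere in parameter space (see Sec.~\ref{sec:islands}); all of this rests on rotated--vector--field arguments backed by numerics rather than on closed--form computation. The second delicate point is the uniqueness and finiteness of limit cycles: establishing that exactly the three listed portraits carry limit cycles requires excluding additional Hopf or saddle--node bifurcations of periodic orbits and bounding their number — hard for general quadratic systems, but tractable here because the relevant antisaddle is unique and the natural parameter rotates the field monotonically.
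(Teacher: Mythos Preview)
Your overall pipeline matches the paper's: normal form, statement (a) as an immediate consequence, algebraic bifurcation surfaces in the three--parameter space, a single non--algebraic separatrix--connection surface $({\cal S}_7)$ found numerically, cell--by--cell determination of phase portraits (with the program P4), and a final collapse via integer invariants and equivalence tables.

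Where you diverge is in the technical tools for the limit--cycle part. You propose to control existence and uniqueness of limit cycles by rotated--vector--field monotonicity and by ``the relevant antisaddle being unique''. The paper instead leverages precisely the invariant straight line you obtain in (a): since every system in $Q\overline{sn}\overline{SN}(A)$ has $\{y=0\}$ invariant, the Coll--Llibre theorem (item (iv) of Sec.~\ref{sec:basicwf}) gives at most one limit cycle automatically, with no rotation argument needed. For $V_{14}$ the paper does not rely on Hopf at all but proves directly (Lemma~\ref{lemma_v14}) that a limit cycle must exist: it locates the focus and the node in opposite even quadrants, shows they have the same stability via the sign of $\mu/\mathcal T_4$, and uses the infinite saddles plus the invariant line to trap a periodic orbit around the focus. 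Note also that in $V_{14}$ there are two finite antisaddles, not one, so your ``unique antisaddle'' heuristic would need adjustment. The paper's route is shorter here because the invariant line does the heavy lifting; your rotated--field approach would presumably also work but is more laborious and is not what the paper does. Similarly, for the endpoints of $7S_1$ the paper argues by coherence with the adjacent strata (a weak--saddle/weak--focus dichotomy on $({\cal S}_2)$ and the two--invariant--line obstruction on $({\cal S}_8)$), rather than by a monotone rotation argument.
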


\begin{theorem} \label{th:1.2} There exist 16 distinct phase portraits for the quadratic vector fields having a finite saddle--node $\overline{sn}_{(2)}$ and an infinite saddle--node of type $\overline{\!{0\choose 2}\!\!}\ SN$ located in the direction defined by the eigenvector with non--null eigenvalue (class $Q\overline{sn}\overline{SN}(B)$). All these phase portraits are shown in Fig. \ref{fig:phase_b}. Moreover, the following statements hold:
\begin{enumerate}[(a)]
\item The manifold defined by the eigenvector with non--null eigenvalue is always an invariant straight line under the flow;
\item There exist four phase portraits with graphics, and they are in the regions $1S_{4}$, $2S_{1}$, $1.2L_{2}$ and $1.5L_{1}$;
\item There exists one phase portrait with an integrable center, and it is in the region $2S_{1}$;
\item There exists one phase portrait with an integrable saddle, and it is in the region $2S_{2}$.
\end{enumerate}
\end{theorem}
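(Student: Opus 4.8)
The plan is to obtain a normal form for the class $Q\overline{sn}\overline{SN}(B)$ depending on three essential parameters, then carry out a systematic study of its bifurcation diagram in $\R^3$, and finally read off the phase portraits region by region. First I would bring a generic system of the family to the prescribed normal form: place the finite saddle--node $\overline{sn}_{(2)}$ at the origin with its eigenvectors along the coordinate axes, and impose that the infinite saddle--node $\overline{\!{0\choose 2}\!\!}\ SN$ lie on the vertical axis (this is the defining condition of subfamily $(B)$, as opposed to $(A)$ where it sits on the horizontal axis). Using the affine group and a time rescaling one reduces the twelve coefficients to three free parameters; a direct computation then shows that, in this normal form, the $y$--axis (the manifold tangent to the eigenvector with non--null eigenvalue) divides the flow, which gives item (a). The key bookkeeping step here is to verify that no further normalization is possible, so that the parameter space is genuinely three--dimensional.

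Next I would compute all the bifurcation surfaces in the three--dimensional parameter space. As announced in the introduction, for subfamily $(B)$ all bifurcations are algebraic: there are surfaces governing the appearance and coalescence of the (at most two) additional finite singularities and their nature (captured by discriminants and by the algebraic invariants and comitants in the spirit of \cite{Artes-Llibre-Schlomiuk-Vulpe:2012}), a surface corresponding to the presence of a second invariant straight line, and surfaces detecting Hopf bifurcations, saddle connections and the degeneracies producing an integrable center or an integrable saddle. Assembling these surfaces yields the partition into $9$ three--dimensional, $14$ two--dimensional, $6$ one--dimensional cells and $1$ point claimed in the introduction. For each open three--dimensional region I would pick a representative point, integrate the system (numerically, and symbolically where an invariant line or a first integral is available) and draw its phase portrait on the Poincar\'e disk, then do the same for the lower--dimensional strata on the boundary, checking consistency with the limits coming from the adjacent open regions.

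Finally I would prove that exactly $16$ topologically distinct portraits occur. This has two halves: \emph{realization}, i.e.\ exhibiting all $16$ as in Fig.~\ref{fig:phase_b} together with the region each comes from; and \emph{distinction}, i.e.\ showing no two of them are topologically equivalent, which I would do by comparing topological invariants --- the configuration of singularities at infinity and in the finite plane, the index and separatrix structure of each singular point, the number of limit cycles, and the presence of graphics or of an integrable center/saddle. Items (b), (c) and (d) are then direct inspections of this list: the four graphics appear precisely in $1S_4$, $2S_1$, $1.2L_2$ and $1.5L_1$, the integrable center appears in $2S_1$ and the integrable saddle in $2S_2$, and in each such case the integrability is certified by exhibiting an explicit first integral in the corresponding normal form. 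The main obstacle I expect is not any single surface but the global consistency of the gluing: one must be certain that the algebraic surfaces listed above exhaust all bifurcations (no hidden saddle connection surface, in contrast to case $(A)$ where a non--algebraic surface does appear), and that the phase portraits assigned to adjacent regions match correctly across every two- and one-dimensional stratum; ruling out ``islands'' (cf.\ Sec.~\ref{sec:islands}) and confirming the exact count of limit cycles --- here, that none survive so that no portrait in $(B)$ carries a limit cycle --- is the delicate part.
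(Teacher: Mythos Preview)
Your proposal follows essentially the same strategy as the paper: normal form (Proposition~\ref{normalform_b}), algebraic bifurcation surfaces in the three--parameter space (Sec.~\ref{sec:bifur_b}), cell--by--cell analysis with numerical phase portraits, and distinction of the $16$ portraits via integer--valued invariants (Theorem~\ref{t12} and Tables~3--4). Two small discrepancies are worth noting. First, your list of bifurcation surfaces is slightly off for case~$(B)$: the paper needs only $({\cal S}_1),({\cal S}_2),({\cal S}_3),({\cal S}_5),({\cal S}_6)$ here --- there is no separate ``second invariant line'' surface and no saddle--connection surface in this subfamily, which is precisely why all bifurcations turn out algebraic. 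Second, for items~(c) and~(d) the paper does \emph{not} exhibit an explicit first integral; instead it invokes the invariant--theoretic criteria of \cite{Vulpe:2011} (checking $\mathcal T_4=0$, $\mathcal T_3\neq 0$, the sign of $\mathcal T_3\mathcal F$, and $\mathcal F_1=\mathcal F_2=\mathcal F_3\mathcal F_4=0$) to certify the integrable center on $2S_1$ and the integrable saddle on $2S_2$. Your route via a Darboux first integral (the system on $\ell=0$ factors as $\dot x=x(x+2hy)$, $\dot y=y(1+2mx+2hy)$) is a legitimate alternative and arguably more self--contained, whereas the paper's approach plugs directly into an existing classification.
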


For the class $Q\overline{sn}\overline{SN}(A)$, from its 29 different phase portraits, 9 occur in 3--dimensional parts, 14 in 2--dimensional parts, 5 in 1--dimensional parts and 1 occur in a single 0--dimensional part, and for the class $Q\overline{sn}\overline{SN}(B)$, from its 16 different phase portraits, 5 occur in 3--dimensional parts, 7 in 2--dimensional parts, 3 in 1--dimensional parts and 1 occur in a single 0--dimensional part.

In Figs. \ref{fig:phase_a1}, \ref{fig:phase_a2} and \ref{fig:phase_b} we have denoted all the singular points with a small disk. We have plotted with wide curves the separatrices and we have added some thinner orbits to avoid confusion in some required cases.

\begin{remark} \rm We label the phase portraits according to the parts of the bifurcation diagram where they occur. These labels could be different for two topologically equivalent phase portraits occurring in distinct parts. Some of the phase portraits in 3--dimensional parts also occur in some 2--dimensional parts bordering these 3--dimensional parts. An example occurs when a node turns into a focus. An analogous situation happens for phase portraits in 2--dimensional (respectively, 1--dimensional) parts, coinciding with a phase portrait on 1--dimensional (respectively, 0--dimensional) part situated on the border of it.
\end{remark}

The work is organized as follows. In Sec. \ref{sec:qvfsn2SN02ab} we describe the normal form for the families of systems having a finite saddle--node and an infinite saddle--node of type $\overline{\!{0\choose 2}\!\!}\ SN$ in both horizontal and vertical axes.

For the study of real planar polynomial vector fields two compactifications are used. In Sec. \ref{sec:poincare} we describe very briefly the Poincar\'{e} compactification on the 2--dimensional sphere.

In Sec. \ref{sec:basicwf} we list some very basic properties of general quadratic systems needed in this study.

In Sec. \ref{sec:internum} we mention some algebraic and geometric concepts that were introduced in \cite{Schlomiuk-Pal:2001,Llibre-Schlomiuk:2004} involving intersection numbers, zero--cycles, divisors, and T--comitants and invariants for quadratic systems as used by the Sibirskii school. We refer the reader directly to \cite{Artes-Llibre-Schlomiuk:2006} where these concepts are widely explained.

In Secs. \ref{sec:bifur_a} and \ref{sec:bifur_b}, using algebraic invariants and T--comitants, we construct the bifurcation surfaces for the classes $Q\overline{sn}\overline{SN}(A)$ and $Q\overline{sn}\overline{SN}(B)$, respectively.

In Sec. \ref{sec:islands} we comment about the possible existence of ``islands'' in the bifurcation diagram.

In Sec. \ref{sec:mainthm} we introduce a global invariant denoted by ${\cal I}$, which classifies completely, up to topological equivalence, the phase portraits we have obtained for the systems in the classes $Q\overline{sn}\overline{SN}(A)$ and $Q\overline{sn}\overline{SN}(B)$. Theorems \ref{t11} and \ref{t12} show clearly that they are uniquely determined (up to topological equivalence) by the values of the invariant ${\cal I}$.

\begin{remark} \rm It is worth mentioning that a third subclass $Q\overline{sn}\overline{SN}(C)$ of $Q\overline{sn}\overline{SN}$ must be considered. This subclass consists of planar quadratic systems with a finite saddle--node $\overline{sn}_{(2)}$ situated as in we do in this work and an infinite saddle--node of type $\overline{\!{0\choose 2}\!\!}\ SN$ in the bisector of the first and third quadrants and it is currently being studied by the same authors.
\end{remark}

In \cite{Artes-Kooij-Llibre:1998} the authors classified all the structurally stable quadratic planar systems modulo limit cycles, also known as the codimension--zero quadratic systems (roughly speaking, those systems whose all singularities, finite and infinite, are simple, with no separatrix connection, and where any nest of limit cycles is considered a single point with the stability of the outer limit cycle) by proving the existence of 44 topologically different phase portraits for these systems. The natural continuation in this idea is the classification of the structurally unstable quadratic systems of codimension--one, i.e. those systems which have one and only one of the following simplest structurally unstable objects: a saddle--node of multiplicity two (finite or infinite), a separatrix from one saddle point to another, and a separatrix forming a loop for a saddle point with its divergent non--zero. This study is already in progress \cite{Artes-Llibre:2013}, all topological possibilities have already been found, some of them have already been proved impossible and many representatives have been located, but still remain some cases without candidate. One way to obtain codimension--one phase portraits is considering a perturbation of known phase portraits of quadratic systems of higher degree of degeneracy. This perturbation would decrease the codimension of the system and we may find a representative for a topological equivalence class in the family of the codimension--one systems and add it to the existing classification.

In order to contribute to this classification, we study some families of quadratic systems of higher degree of degeneracy, e.g. systems with a weak focus of second order, see \cite{Artes-Llibre-Schlomiuk:2006}, and with a finite semi--elemental triple node, see \cite{Artes-Rezende-Oliveira:2013}. In this last paper, the authors show that, after a quadratic perturbation in the phase portrait $V_{11}$, the semi--elemental triple node is split into a node and a saddle--node and the new phase portrait is topologically equivalent to one of the topologically possible phase portrait of codimension one expected to exist.

The present study is part of this attempt of classifying all the codimension--one quadratic systems. We propose the study of a whole family of quadratic systems having a finite double saddle--node and an infinite saddle--node of type $\overline{\!{0\choose 2}\!\!}\ SN$. Both subfamilies reported here will not bifurcate to any of the codimension--one systems still missing, but in the subfamily $Q\overline{sn}\overline{SN}(C)$ will appear some new examples due to the fact that the complexity and richness of the bifurcation diagram will be the highest one we have already found until now.

\section{Quadratic vector fields with a finite saddle--node $\overline{sn}_{(2)}$ and an infinite saddle--node of type $\overline{\!{0\choose 2}\!\!}\ SN$}\label{sec:qvfsn2SN02ab}

\indent A singular point $r$ of a planar vector field $X$ in $\R^2$ is \textit{semi--elemental} if the determinant of the matrix of its linear part, $DX(r)$, is zero, but its trace is different from zero.

The following result characterizes the local phase portrait at a semi--elemental singular point.

\begin{proposition} \label{th2.19} \cite{Andronov-Leontovich-Gordon-Maier:1973,Dumortier-Llibre-Artes:2006}
Let $r=(0,0)$ be an isolated singular point of the vector field $X$ given by
    \begin{equation}
        \begin{array}{ccl}
            \dot{x} & = & M(x,y), \\
            \dot{y} & = & y + N(x,y), \\
        \end{array}
        \label{eqth2.19}
    \end{equation}
where $M$ and $N$ are analytic in a neighborhood of the origin starting with at least degree 2 in the variables $x$ and $y$. Let $y=f(x)$ be the solution of the equation $y + N(x,y) = 0$ in a neighborhood of the point $r=(0,0)$, and suppose that the function $g(x) = M(x,f(x))$ has the expression $g(x) = a x^\alpha + o(x^\alpha)$, where $\alpha \geq 2$ and $a \neq 0$. So, when $\alpha$ is odd, then $r=(0,0)$ is either an unstable multiple node, or a multiple saddle, depending if $a>0$, or $a<0$, respectively. In the case of the multiple saddle, the separatrices are tangent to the $x$--axis. If $\alpha$ is even, the $r=(0,0)$ is a multiple saddle--node, i.e. the singular point is formed by the union of two hyperbolic sectors with one parabolic sector. The stable separatrix is tangent to the positive (respectively, negative) $x$--axis at $r=(0,0)$ according to $a<0$ (respectively, $a>0$). The two unstable separatrices are tangent to the $y$--axis at $r=(0,0)$.
\end{proposition}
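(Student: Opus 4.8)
The plan is to reduce the system to a one–dimensional problem via a center–manifold type argument, and then read off the local phase portrait from the leading term of the reduced vector field. First I would observe that the hypotheses give a singularity whose linear part has eigenvalues $0$ and $1$; the line $y=0$ is the eigendirection with zero eigenvalue and $y$-axis carries the eigendirection with eigenvalue $1$. Since the equation $y+N(x,y)=0$ has, by the implicit function theorem, a unique analytic solution $y=f(x)$ with $f(0)=0$ and $f'(0)=0$ (because $N$ starts at degree $2$), the curve $\{y=f(x)\}$ is an invariant analytic curve: it is the (strong–stable-type) slow manifold tangent to the $x$-axis. I would verify invariance directly by differentiating $y=f(x)$ along the flow, or simply cite the analytic center–manifold theorem. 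On this curve the dynamics is governed by $\dot x = M(x,f(x)) = g(x) = a x^\alpha + o(x^\alpha)$.

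Next I would do the flow-box / normal-form reduction transverse to the curve: straightening $\{y=f(x)\}$ to $\{y=0\}$ by the analytic change of coordinates $(x,y)\mapsto(x,y-f(x))$, the system becomes $\dot x = g(x) + y\,h(x,y)$, $\dot y = y(1+\tilde N(x,y))$ for suitable analytic $h,\tilde N$ vanishing appropriately at the origin. The $y$-axis direction is now hyperbolic with positive eigenvalue, so every trajectory not on $\{y=0\}$ is repelled from that axis exponentially, and the local phase portrait is determined by gluing the hyperbolic behavior in $y$ to the one-dimensional flow $\dot x = g(x)$ on $\{y=0\}$. Here I would split into the two parity cases. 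If $\alpha$ is even, $g$ does not change sign: $\dot x$ has the sign of $a$ on both sides of $x=0$, so on $\{y=0\}$ one separatrix enters the origin and the other leaves; combined with the repulsion in $y$ this produces exactly one parabolic and two hyperbolic sectors, i.e. a saddle–node, with the stable separatrix along the half of the $x$-axis on which $g<0$ (positive $x$ if $a<0$, negative $x$ if $a>0$) and the two unstable separatrices tangent to the $y$-axis. If $\alpha$ is odd, $g$ changes sign: for $a>0$ both half-axes flow outward, giving, together with the outward $y$-direction, a node (unstable, multiple); for $a<0$ both half-axes flow inward along $\{y=0\}$ while the transverse direction is outward, producing a saddle whose two separatrices are the two halves of the $x$-axis.

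To make the sector count rigorous rather than heuristic I would invoke the blow-up / normal-sector machinery of \cite{Dumortier-Llibre-Artes:2006}: a single (quasi-homogeneous) blow-up at the origin desingularizes the vector field, the exceptional divisor carries finitely many singular points whose nature is elementary and computable from $a$ and $\alpha$, and the Poincaré index and the arrangement of hyperbolic/parabolic sectors on the divisor translate back, after blow-down, into the stated local picture. The tangency claims for the separatrices follow by tracking the directions that survive the blow-down. The main obstacle — or rather the only place where care is genuinely needed — is the bookkeeping in this desingularization: ensuring that the number and cyclic order of sectors, and the precise half-axis on which the stable separatrix of the saddle–node lies, come out correctly as functions of the sign of $a$ and the parity of $\alpha$; everything else is a routine application of the implicit function theorem and the stable–manifold theorem. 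Since this is a classical result, I would in fact simply cite \cite{Andronov-Leontovich-Gordon-Maier:1973} and \cite{Dumortier-Llibre-Artes:2006} for the full argument and limit the exposition to the reduction described above. \bbox
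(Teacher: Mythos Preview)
The paper does not supply its own proof of this proposition; it is quoted as a classical result from \cite{Andronov-Leontovich-Gordon-Maier:1973} and \cite{Dumortier-Llibre-Artes:2006} and then used without further argument. Your sketch follows the standard route taken in those references, so in spirit it matches what the paper relies on.

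There is, however, one genuine slip in your outline. The curve $\{y=f(x)\}$ defined by $y+N(x,y)=0$ is the \emph{nullcline} of $\dot y$, not an invariant curve: on it $\dot y=0$ identically, whereas invariance would require $\dot y=f'(x)\dot x$, i.e.\ $f'(x)M(x,f(x))=0$, which fails in general. Differentiating $y=f(x)$ along the flow, as you propose, will therefore not confirm invariance. The invariant object you actually need is the center manifold $y=\varphi(x)$, which is tangent to the $x$-axis but typically distinct from $f$. What makes the proposition work is that $M(x,\varphi(x))$ and $M(x,f(x))$ share the same leading term $a x^{\alpha}$: from the center-manifold equation $\varphi'(x)M(x,\varphi(x))=\varphi(x)+N(x,\varphi(x))$ and the nullcline equation $f(x)+N(x,f(x))=0$ one gets $\varphi(x)-f(x)=O(x^{\alpha+1})$, and since $\partial M/\partial y=O(x)$ this forces $M(x,\varphi(x))-M(x,f(x))=O(x^{\alpha+2})$. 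With that correction your reduction to a one-dimensional flow and the subsequent parity/sign case analysis go through as you describe, and deferring the sector bookkeeping to \cite{Dumortier-Llibre-Artes:2006} is exactly what the paper itself does.
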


In the particular case where $M$ and $N$ are real quadratic polynomials in the variables $x$ and $y$, a quadratic system with a semi--elemental singular point at the origin can always be written into the form
    \begin{equation}
        \begin{array}{ccl}
            \dot{x} & = & g x^2 + 2h xy + k y^2, \\
            \dot{y} & = & y + \ell x^2 + 2m xy + n y^2. \\
        \end{array}
        \label{eqsemielemental}
    \end{equation}

By Proposition \ref{th2.19}, if $g \neq 0$, then we have a double saddle--node $\overline{sn}_{(2)}$, using the notation introduced in \cite{Artes-Llibre-Schlomiuk-Vulpe:2012}.

In the normal form above, we consider the coefficient of the terms $xy$ in both equations multiplied by 2 in order to make easier the calculations of the algebraic invariants we shall compute later.

We note that in the normal form \eqref{eqsemielemental} we already have a semi--elemental point at the origin and its eigenvectors are $(1,0)$ and $(0,1)$ which condition the possible positions of the infinite singular points.

We suppose that there exists a $\overline{\!{0\choose 2}\!\!}\ SN$ at some point at the infinity. If this point is different from either $[1:0:0]$ of the local chart $U_{1}$, or $[0:1:0]$ of the local chart $U_{2}$, after a reparametrization of the type $(x,y) \to (x,\alpha y)$, $\alpha \in \R$, this point can be replaced at $[1:1:0]$ of the local chart $U_{1}$, that is, at the bisector of the first and third quadrants. However, if $\overline{\!{0\choose 2}\!\!}\ SN$ is at $[1:0:0]$ or $[0:1:0]$, we cannot apply this change of coordinates and it requires an independent study for each one of the cases, which in turn are not equivalent themselves due to the position of the infinite saddle--node with respect to the eigenvectors of the finite saddle--node.

\subsection{The normal form for the subclass $Q\overline{sn}\overline{SN}(A)$}

The following result states the normal form for systems in $Q\overline{sn}\overline{SN}(A)$.

\begin{proposition} \label{normalform_a}
Every system with a finite semi--elemental double saddle--node $\overline{sn}_{(2)}$ and an infinite saddle--node of type $\overline{\!{0\choose 2}\!\!}\ SN$ located in the direction defined by the eigenvector with null eigenvalue can be brought via affine transformations and time rescaling to the following normal form
    \begin{equation}
        \begin{array}{ccl}
            \dot{x} & = & x^2 + 2h xy + k y^2, \\
            \dot{y} & = & y + xy + n y^2, \\
        \end{array}
        \label{eqsna}
    \end{equation}
where $h$, $k$ and $n$ are real parameters.
\end{proposition}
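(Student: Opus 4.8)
The plan is to start from the form \eqref{eqsemielemental} of a quadratic system with a semi--elemental point at the origin, assuming $g\neq 0$ so that this point is a $\overline{sn}_{(2)}$ by Proposition \ref{th2.19}, and to translate the hypothesis ``there is an infinite saddle--node of type $\overline{\!{0\choose 2}\!\!}\ SN$ in the null--eigenvalue direction'' into explicit conditions on $g,h,k,\ell,m,n$. Since the linear part at the origin is $\mathrm{diag}(0,1)$, the eigenvector with null eigenvalue is $(1,0)$, so the required infinite singular point must be $[1:0:0]$, which in the Poincar\'e chart $U_1$ is the origin.

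First I would compute the Poincar\'e compactification of \eqref{eqsemielemental} in the chart $U_1$ (with $x=1/v$, $y=u/v$ and the usual rescaling of time by $v^{d-1}$), obtaining
\begin{equation*}
\dot u = \ell + (2m-g)u + (n-2h)u^2 - k u^3 + uv, \qquad \dot v = -v\,(g + 2hu + k u^2).
\end{equation*}
Requiring $(0,0)$ to be a singular point forces $\ell=0$. The Jacobian at $(0,0)$ is then the diagonal matrix with entries $2m-g$ and $-g$; as $g\neq 0$, the point is elemental (a plain infinite node or saddle, not of the desired type) unless $2m-g=0$, i.e. $m=g/2$. With $\ell=0$ and $m=g/2$ the line at infinity $\{v=0\}$ is invariant and, restricted to it, the flow is $\dot u = (n-2h)u^2 - k u^3$; applying the semi--elemental classification of Proposition \ref{th2.19} in these coordinates shows that $(0,0)$ is then a saddle--node, formed by the collision of an infinite saddle and an infinite node, hence exactly of type $\overline{\!{0\choose 2}\!\!}\ SN$, precisely when in addition $n\neq 2h$ (if $n=2h$ the infinite point degenerates further and the system leaves the class).

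With $\ell=0$ and $m=g/2$ the system reads $\dot x = g x^2 + 2hxy + ky^2$, $\dot y = y + gxy + ny^2$. It remains to remove the residual symmetry: the only affine changes preserving the shape \eqref{eqsemielemental} together with the normalization of the linear $\dot y$--coefficient to $1$ are the diagonal rescalings $(x,y)\mapsto(\alpha x,\beta y)$ with $\alpha\beta\neq 0$. Taking $\alpha=1/g$, $\beta=1$ and relabelling $gk$ as $k$ brings the system to \eqref{eqsna}, while keeping the origin a $\overline{sn}_{(2)}$ (now with $g=1$) and $[1:0:0]$ the infinite saddle--node of type $\overline{\!{0\choose 2}\!\!}\ SN$. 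Since $g\neq 0$ this change is always admissible, so every system in $Q\overline{sn}\overline{SN}(A)$ can be written as \eqref{eqsna}.

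The main obstacle I anticipate is the second step: carrying out the compactification and the local analysis at the infinite point carefully enough to isolate exactly the conditions defining the type $\overline{\!{0\choose 2}\!\!}\ SN$ — distinguishing it both from the elemental cases (an infinite node or saddle when $2m-g\neq 0$) and from the more degenerate configurations (a triple infinite point when $n=2h$, or the qualitatively different infinite saddle--node $\overline{\!{1\choose 1}\!\!}\ SN$), i.e. making sure that the two colliding points are an infinite saddle and an infinite node and that the infinite multiplicity is exactly two. The remaining arguments are routine bookkeeping with affine transformations and time rescalings.
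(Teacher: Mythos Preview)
Your argument is correct and mirrors the paper's own proof: pass to the chart $U_1$, force $\ell=0$ and then $m=g/2$ so that the origin there is singular with vanishing tangential eigenvalue, and finally rescale to normalize $g=1$. You are in fact slightly more careful than the paper on two minor points---you isolate the non--degeneracy condition $n\neq 2h$ (which the paper handles only later as the bifurcation surface $(\mathcal{S}_5)$), and you normalize $g$ by a spatial rescaling of $x$ rather than the ``time rescaling'' the paper names, which is the right choice since a pure time rescaling would alter the linear coefficient in $\dot y$.
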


\begin{proof}
We start with system \eqref{eqsemielemental}. This system already has a finite semi--elemental double saddle--node at the origin (then $g \neq 0$) with its eigenvectors in the direction of the axes. The first step is to place the point $\overline{\!{0\choose 2}\!\!}\ SN$ at the origin of the local chart $U_{1}$ with coordinates $(w,z)$. For that, we must guarantee that the origin is a singularity of the flow in $U_{1}$, $$\ba{l} \dot{w} = \l+(-g+2m)w+(-2h+n)w^2-kw^3+wz, \\ \dot{z} = (-g-2hw-kw^2)z. \\ \ea$$ Then, we set $\l=0$ and, by analyzing the Jacobian of the former expression, we set $m=g/2$ in order to have the eigenvalue associated to the eigenvector on $z=0$ being null. Since $g \neq 0$, by a time rescaling, we can set $g=1$ and obtain the form \eqref{eqsna}.
\end{proof}

In view that the normal form \eqref{eqsna} involves the coefficients $h$, $k$ and $n$, which are real, the parameter space is $\R^3$ with coordinates $(h,k,n)$.

\begin{remark} \rm
After rescaling the parameters, we note that system \eqref{eqsna} is symmetric in relation to the real parameter $h$. Then, we shall only consider $h \geq 0$.
\end{remark}

\begin{remark} \rm
We note that $\{y=0\}$ is an invariant straight line under the flow of \eqref{eqsna}.
\end{remark}

\subsection{The normal form for the subclass $Q\overline{sn}\overline{SN}(B)$}

The following result gives the normal form for systems in $Q\overline{sn}\overline{SN}(B)$.

\begin{proposition} \label{normalform_b}
Every system with a finite semi--elemental double saddle--node $\overline{sn}_{(2)}$ and an infinite saddle--node of type $\overline{\!{0\choose 2}\!\!}\ SN$ located in the direction defined by the eigenvector with non-null eigenvalue can be brought via affine transformations and time rescaling to the following normal form
    \begin{equation}
        \begin{array}{ccl}
            \dot{x} & = & x^2 + 2h xy, \\
            \dot{y} & = & y + \l x^2 + 2m xy + 2h y^2, \\
        \end{array}
        \label{eqsnb}
    \end{equation}
where $h$, $\l$ and $m$ are real parameters.
\end{proposition}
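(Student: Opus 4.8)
Proof proposal for Proposition~\ref{normalform_b}:

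The plan is to mirror the argument used in the proof of Proposition~\ref{normalform_a}, but now forcing the infinite saddle--node to sit on the axis corresponding to the \emph{non}-null eigenvalue of the finite saddle--node. Starting from the general semi--elemental normal form \eqref{eqsemielemental}, the finite point at the origin already has eigenvectors $(1,0)$ and $(0,1)$, with $(0,1)$ being the direction of the nonzero eigenvalue. So this time the $\overline{\!{0\choose 2}\!\!}\ SN$ must be placed at $[0:1:0]$, which is the origin of the local chart $U_{2}$ with coordinates $(u,v)$. First I would write down the expression of system \eqref{eqsemielemental} in the chart $U_{2}$ (via $x = u/v$, $y = 1/v$, together with the usual time rescaling by a power of $v$), obtaining a system of the form $\dot{u} = P(u,v)$, $\dot{v} = Q(u,v)$.

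Next I would impose that the origin of $U_{2}$ be a singular point: this kills the constant term of $\dot u$, which forces one algebraic condition on the coefficients of \eqref{eqsemielemental} (the analogue of setting $\l=0$ in the $(A)$ case — here it will be a condition relating the leading coefficients, and one checks it amounts to killing $k$, reflecting that the $y$--direction must be ``flat'' for the two infinite points on that side to collide there). Then I would compute the Jacobian of $(P,Q)$ at the origin of $U_{2}$ and require that the eigenvalue associated with the eigenvector along the line at infinity $\{v=0\}$ vanish; this is the second algebraic condition (the analogue of $m=g/2$), and together with the isolatedness/non-degeneracy hypothesis it guarantees that the collision is genuinely of type $\overline{\!{0\choose 2}\!\!}\ SN$ rather than a more degenerate configuration. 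After these two conditions are solved, the surviving coefficients can be renamed, $g$ can be set to $1$ by the remaining time homothety (using $g\neq 0$, which holds because the finite point is a \emph{double} saddle--node by Proposition~\ref{th2.19}), and one reads off exactly the form \eqref{eqsnb} with free parameters $h$, $\l$, $m$; the coefficient $2h$ appearing in front of $y^{2}$ in the second equation is precisely what is left after these reductions, and one should check that no further affine change preserving the normalizations is available, so the three parameters are essential.

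I expect the main obstacle to be bookkeeping rather than conceptual: carefully tracking which affine transformations and time rescalings have already been spent in reaching \eqref{eqsemielemental} so that one does not accidentally ``use'' a symmetry twice, and verifying that the two algebraic conditions extracted from the $U_{2}$ chart are independent and compatible (i.e. they do not force $g=0$ or otherwise collapse the family). A secondary subtlety is confirming that placing the $\overline{\!{0\choose 2}\!\!}\ SN$ at $[0:1:0]$ is genuinely inequivalent to the $(A)$ case and cannot be reduced to the bisector case $[1:1:0]$ — this is exactly the point already flagged in the discussion preceding the two propositions, namely that the reparametrization $(x,y)\to(x,\alpha y)$ is unavailable here, so one must be explicit that the only remaining freedom is the scaling normalizing $g$, which is what fixes the count of three parameters.
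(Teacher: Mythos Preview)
Your proposal is correct and follows essentially the same route as the paper's own proof, which is stated in one sentence: start from \eqref{eqsemielemental}, pass to the chart $U_{2}$, impose that the origin there is singular (this forces $k=0$), impose that the eigenvalue along $\{w=0\}$ vanishes (this forces $n=2h$), and then rescale time to set $g=1$. Your anticipated bookkeeping concerns (independence of the two conditions, residual affine freedom, essentiality of the three parameters) go somewhat beyond what the paper actually verifies, but they are easily checked and do not alter the argument.
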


\begin{proof}
Analogously to Proposition \ref{normalform_a}, we start with system \eqref{eqsemielemental}, but now we want to place the point $\overline{\!{0\choose 2}\!\!}\ SN$ at the origin of the local chart $U_{2}$. By following the same steps, we set $k=0$, $n=h/2$, $g=1$ and we obtain the form \eqref{eqsnb}.
\end{proof}

In view that the normal form \eqref{eqsna} involves the coefficients $h$, $\l$ and $m$, which are real, the parameter space is $\R^3$ with coordinates $(h,\l,m)$.

\begin{remark} \rm
After rescaling the parameters, we note that system \eqref{eqsnb} is symmetric in relation to the real parameter $h$. Then, we will only consider $h \geq 0$.
\end{remark}

\begin{remark} \rm
We note that $\{x=0\}$ is an invariant straight line under the flow of \eqref{eqsnb}.
\end{remark}

\section{The Poincar\'{e} compactification and the complex (real) foliation with singularities on $\C\PP^2$ ($\R\PP^2$)} \label{sec:poincare}

\indent A real planar polynomial vector field $\xi$ can be compactified on the sphere as follows. Consider the $x$, $y$ plane as being the plane $Z=1$ in the space $\R^3$ with coordinates $X$, $Y$, $Z$. The central projection of the vector field $\xi$ on the sphere of radius one yields a diffeomorphic vector field on the upper hemisphere and also another vector field on the lower hemisphere. There exists (for a proof see \cite {Gonzales:1969}) an analytic vector field $cp(\xi)$ on the whole sphere such that its restriction on the upper hemisphere has the same phase curves as the one constructed above from the polynomial vector field. The projection of the closed northern hemisphere $H^+$ of $\,\SC^2$ on $Z=0$ under $(X,Y,Z) \to (X,Y)$ is called \textit{the Poincar\'{e} disc}. A singular point $q$ of $cp(\xi)$ is called an \textit{infinite} (respectively, \textit{finite}) singular point if $q\in \SC^1$, the equator (respectively, $q\in \SC^2 \setminus \SC^1$). By the \textit{Poincar\'{e} compactification of a polynomial vector field} we mean the vector field $cp(\xi)$ restricted to the upper hemisphere completed with the equator.

Ideas in the remaining part of this section go back to Darboux's work \cite{Darboux:1878}. Let $p(x,y)$ and $q(x,y)$ be polynomials with real coefficients. For the vector field
    \begin{equation}\label{21}
        p\frac{\p}{\p x}+ q\frac{\p}{\p y},
    \end{equation}
or equivalently for the differential system
    \begin{equation}\label{22}
        \dot x = p(x,y), \qquad \dot y= q(x,y),
    \end{equation}
we consider the associated differential $1$--form \linebreak $\o_{1}= q(x,y)dx- p(x,y)dy$, and the differential equation
    \begin{equation}\label{23}
        \o_{1}=0 \ .
    \end{equation}
Clearly, equation \eqref{23} defines a foliation with singularities on $\C^2$. The affine plane $\C^2$ is compactified on the complex projective space $\C\PP^2= (\C^3\setminus \{0\})/\sim$, where $(X,Y,Z)\sim (X',Y',Z')$ if and only if $(X,Y,Z)= \la (X',Y',Z')$ for some complex $\la\ne 0$. The equivalence class of $(X,Y,Z)$ will be denoted by $[X:Y:Z]$.

The foliation with singularities defined by equation \eqref{23} on $\C^2$ can be extended to a foliation with singularities on $\C\PP^2$ and the $1$--form $\o_{1}$ can be extended to a meromorphic $1$--form $\o$ on $\C\PP^2$ which yields an equation $\o=0$, i.e.
    \begin{equation}\label{ja1}
        A(X,Y,Z)dX+B(X,Y,Z)dY+C(X,Y,Z)dZ=0,
    \end{equation}
whose coefficients $A$, $B$, $C$ are homogeneous polynomials of the same degree and satisfy the relation:
    \begin{equation}\label{ja2}
        A(X,Y,Z)X+B(X,Y,Z)Y+C(X,Y,Z)Z=0,
    \end{equation}
Indeed, consider the map $i: \C^3 \setminus \{Z = 0\} \to \C^2$, given by $i(X,Y,Z)= (X/Z,Y/Z)=(x,y)$ and suppose that $\max\{\deg(p),\deg(q)\}= m>0$. Since $x=X/Z$ and $y=Y/Z$ we have:
    \[
        dx= (ZdX-XdZ)/Z^2, \quad dy= (ZdY-YdZ)/Z^2,
    \]
the pull--back form $i^*(\o_{1})$ has poles at $Z=0$ and yields the equation
    \[\aligned
        i^*(\o_{1})=& q(X/Z,Y/Z) (ZdX-XdZ)/Z^2 \\
                    & -p(X/Z,Y/Z) (ZdY-YdZ)/Z^2 = 0.
    \endaligned \]
Then, the $1$--form $\o= Z^{m+2} i^*(\o_{1})$ in $\C^3\setminus \{Z\ne 0\}$ has homogeneous polynomial coefficients of degree $m+1$, and for $Z=0$ the equations $\o=0$ and $i^*(\o_{1})=0$ have the same solutions. Therefore, the differential equation $\o=0$ can be written as \eqref{ja1}, where
    \be\aligned\label{24}
        A(X,Y,Z) =& Z Q(X,Y,Z)= Z^{m+1} q(X/Z,Y/Z),\\
        B(X,Y,Z) =& -Z P(X,Y,Z)=-Z^{m+1} p(X/Z,Y/Z),\\
        C(X,Y,Z) =& Y P(X,Y,Z)- X Q(X,Y,Z).\\
    \endaligned\ee

Clearly $A$, $B$ and $C$ are homogeneous polynomials of degree $m+1$ satisfying \eqref{ja2}.

In particular, for our quadratic systems \eqref{eqsna}, $A$, $B$ and $C$ take the following forms
    \begin{equation} \label{ja3}
        \begin{aligned}
            A(X,Y,Z)=& YZ(X-nY+Z)\\
            B(X,Y,Z)=& -(X^2+2hXY+kY^2)Z,\\
            C(X,Y,Z)=& Y(2hXY-nXY+kY^2-XZ).\\
        \end{aligned}
    \end{equation}
and for our quadratic systems \eqref{eqsnb}, $A$, $B$ and $C$ take the following forms
    \begin{equation} \label{ja4}
        \begin{aligned}
            A(X,Y,Z)=& Z(\l X^2+mXY+2hY^2+YZ),\\
            B(X,Y,Z)=& -X(X+2hY)Z,\\
            C(X,Y,Z)=& X-X(\l X^2-XY+2mXY+YZ).\\
        \end{aligned}
    \end{equation}

We note that the straight line $Z=0$ is always an algebraic invariant curve of this foliation and that its singular points are the solutions of the system: $A(X,Y,Z)= B(X,Y,Z)= C(X,Y,Z)=0$. We note also that $C(X,Y,Z)$ does not depend on $b$.

To study the foliation with singularities defined by the differential equation \eqref{ja1} subject to \eqref{ja2} with $A$, $B$, $C$ satisfying the above conditions in the neighborhood of the line $Z=0$, we consider the two charts of $\C\PP^2$: $(u,z)= (Y/X,Z/X)$, $X\ne 0$, and $(v,w)= (X/Y,Z/Y)$, $Y\ne 0$, covering this line. We note that in the intersection of the charts $(x,y)= (X/Z,Y/Z)$ and $(u,z)$ (respectively, $(v,w)$) we have the change of coordinates $x=1/z$, $y=u/z$ (respectively, $x=v/w$, $y=1/w$). Except for the point $[0:1:0]$ or the point $[1:0:0]$, the foliation defined by equations \eqref{ja1},\eqref{ja2} with $A$, $B$, $C$ as in \eqref{24} yields in the neighborhood of the line $Z=0$ the foliations associated with the systems
    \be \begin{aligned}
        \dot u =& uP(1,u,z)-Q(1,u,z)= C(1,u,z), \\
        \dot z=& z P(1,u,z), \label{26}
    \end{aligned} \ee
or
    \be \begin{aligned}
        \dot v =& vQ(v,1,w)-P(v,1,w)= -C(v,1,w), \\
        \dot w =& w P(v,1,w). \label{27}
    \end{aligned} \ee

In a similar way we can associate a real foliation with singularities on $\R\PP^2$ to a real planar polynomial vector field.

\section{A few basic properties of quadratic systems relevant for this study} \label{sec:basicwf}

\indent We list below results which play a role in the study of the global phase portraits of the real planar quadratic systems \eqref{eq:qs} having a semi--elemental triple node.

The following results hold for any quadratic system:
    \begin{enumerate}[(i)]
        \item \label{item_i} A straight line either has at most two (finite) contact points with a quadratic system (which include the singular points), or it is formed by trajectories of the system; see Lemma 11.1 of \cite{Ye:1986}. We recall that by definition a {\it contact point} of a straight line $L$ is a point of $L$ where the vector field has the same direction as $L$, or it is zero.

        \item \label{item_ii} If a straight line passing through two real finite singular points $r_{1}$ and $r_{2}$ of a quadratic system is not formed by trajectories, then it is divided by these two singular points in three segments $\ov{\infty r_{1}}$, $\ov{r_{1} r_{2}}$ and $\ov{r_{2} \infty}$ such that the trajectories cross $\ov{\infty r_{1}}$ and $\ov{r_{2}\infty}$ in one direction, and they cross $\ov{r_{1} r_{2}}$ in the opposite direction; see Lemma 11.4 of \cite{Ye:1986}.

        \item \label{item_iii} If a quadratic system has a limit cycle, then it surrounds a unique singular point, and this point is a focus; see \cite{Coppel:1966}.

        \item \label{item_iv} A quadratic system with an invariant straight line has at most one limit cycle; see \cite{Coll-Llibre:1988}.

        \item \label{item_v} A quadratic system with more than one invariant straight line has no limit cycle; see \cite{Bautin:1954}.
    \end{enumerate}

\begin{proposition}\label{th:4.2} Any graphic or degenerate graphic in a real planar polynomial differential system must either
\begin{enumerate}[1)]
\item surround a singular point of index greater than or equal to +1, or
\item contain a singular point having an elliptic sector situated in the region delimited by the graphic, or
\item contain an infinite number of singular points.
\end{enumerate}
\end{proposition}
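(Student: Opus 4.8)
The plan is to argue by the Poincaré–Hopf index theorem (or, equivalently, the Poincaré index formula for a closed curve) applied on the region bounded by the graphic. Recall that a graphic (or degenerate graphic) is, by the definition recalled in the introduction, a closed curve $\Gamma$ formed by finitely many singular points $r_1,\dots,r_n$ joined by connecting orbits $\gamma_1,\dots,\gamma_n$, with coherent normal orientations; in particular $\Gamma$ is a Jordan curve on the sphere (or in the Poincaré disc) and bounds a region $\mathcal R$ lying to one fixed side of it. If $\Gamma$ (or one of the singular points on it) contains infinitely many singularities, we are immediately in case 3), so we may assume from now on that the vector field has only finitely many singular points in $\overline{\mathcal R}$.

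First I would compute the contribution of the boundary $\Gamma$ to the index count. Because the normal orientations of the connecting orbits are coherent and each $\gamma_i$ is a genuine trajectory of the flow, the vector field is everywhere tangent to $\Gamma$ along the open arcs $\gamma_i$; the only places where the direction of $X$ along $\Gamma$ can ``turn'' relative to the tangent of $\Gamma$ are the corner points $r_i$. A standard local analysis at each $r_i$ (looking at which hyperbolic/parabolic/elliptic sectors of $r_i$ are swallowed by $\mathcal R$ and which connecting orbits enter or leave along them) shows that the total rotation of $X$ along $\Gamma$, read as a closed curve, equals $2\pi$ \emph{unless} some $r_i$ presents an elliptic sector inside $\mathcal R$; an elliptic sector contributes the ``wrong sign'' and can make the boundary rotation number drop. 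Quantitatively, if no $r_i$ has an elliptic sector inside $\mathcal R$, then the index of $\Gamma$ as a closed curve is $+1$, and by the Poincaré index formula $\sum_{r\in\mathcal R}\operatorname{ind}(r)=+1$, forcing at least one interior singular point of index $\ge+1$ — this is case 1). This step, the bookkeeping of sector contributions at the corners, is where essentially all the work lies and is the main obstacle: one must treat carefully the various local phase portraits at $r_i$ allowed for a polynomial vector field (elemental, semi-elemental such as the saddle-node of Proposition \ref{th2.19}, nilpotent, and degenerate points), including the possibility that a single $r_i$ is visited several times by $\Gamma$.

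The remaining alternatives then fall out of the same dichotomy. If $\Gamma$ does have a singular point with an elliptic sector situated in $\mathcal R$, we are in case 2) by construction. If not, the argument above has already placed us in case 1). Thus every graphic or degenerate graphic falls into 1), 2) or 3), and the three alternatives are exactly the obstructions to running the index count to its ``generic'' conclusion. I would present the index-along-$\Gamma$ computation first as a lemma (rotation of $X$ along a piecewise-trajectory Jordan curve), then deduce the trichotomy in one short paragraph; for the detailed corner analysis I would lean on the classification of local phase portraits of planar analytic vector fields in \cite{Dumortier-Llibre-Artes:2006} and on the treatment of graphics in \cite{Dumortier-Roussarie-Rousseau:1994}, rather than redoing it here.
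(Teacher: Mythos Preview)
The paper does not actually prove this proposition: its entire proof reads ``See the proof in \cite{Artes-Kooij-Llibre:1998}.'' So there is nothing in the present paper to compare your argument against beyond that citation. Your Poincar\'e--Hopf strategy --- push $\Gamma$ slightly into the region $\mathcal R$, compute the rotation of $X$ along the perturbed curve via the turning--tangent theorem plus local sectoral contributions at the corners, and conclude that the interior index sum is $+1$ unless an elliptic sector on the boundary spoils the count --- is exactly the classical route and is almost certainly what the cited memoir does; in that sense your plan matches the intended proof.

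One point does need tightening before this becomes a proof rather than a sketch. You assert that $\Gamma$ is a Jordan curve, but the very definition you quote allows repetitions among the $r_i$; a graphic can revisit a singular point and hence fail to be simple (you even note this possibility later, which is inconsistent with the earlier claim). For such non-simple graphics you must first explain what ``the region delimited by the graphic'' means and why the index bookkeeping still goes through --- typically by using the coherent normal orientation to select the correct complementary component, or by decomposing the graphic into simple sub-loops and running the argument on one of them. The second soft spot is the key claim that, absent interior elliptic sectors, the corner contributions always sum so that $\operatorname{ind}(\Gamma')=+1$: this is true, but it requires Bendixson's sector formula together with the observation that at each corner one separatrix is incoming and one outgoing, which constrains the admissible sector patterns on the interior side. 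You flag this as ``where essentially all the work lies,'' which is honest; just be aware that citing \cite{Dumortier-Llibre-Artes:2006} for the local pictures is not the same as carrying out the count, and the cited memoir \cite{Artes-Kooij-Llibre:1998} is where that count is actually written down.
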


\begin{proof} See the proof in \cite{Artes-Kooij-Llibre:1998}.
\end{proof}

\section{Some algebraic and geometric concepts} \label{sec:internum}

\indent In this article we use the concept of intersection number for curves (see \cite{Fulton:1969}). For a quick summary see Sec. 5 of \cite{Artes-Llibre-Schlomiuk:2006}.

We shall also use the concepts of zero--cycle and divisor (see \cite{Hartshorne:1977}) as specified for quadratic vector fields in \cite{Schlomiuk-Pal:2001}. For a quick summary see Sec. 6 of \cite{Artes-Llibre-Schlomiuk:2006}.

We shall also use the concepts of algebraic invariant and T--comitant as used by the Sibirskii school for differential equations. For a quick summary see Sec. 7 of \cite{Artes-Llibre-Schlomiuk:2006}.

In the next two sections we describe the algebraic invariants and T--comitants which are relevant in the study of families \eqref{eqsna}, see Sec. \ref{sec:bifur_a},  and \eqref{eqsnb}, see Sec. \ref{sec:bifur_b}.

\section{The bifurcation diagram of the systems in $Q\overline{sn}\overline{SN}(A)$} \label{sec:bifur_a}

We recall that, in view that the normal form \eqref{eqsna} involves the coefficients $h$, $k$ and $n$, which are real, the parameter space is $\R^3$ with coordinates $(h,k,n)$.

\subsection{Bifurcation surfaces due to the changes in the nature of singularities}

For systems \eqref{eqsna} we will always have $(0,0)$ as a finite singular point, a double saddle--node.

From Sec. 7 of \cite{Artes-Llibre-Vulpe:2008} we get the formulas which give the bifurcation surfaces of singularities in $\R^{12}$, produced by changes that may occur in the local nature of finite singularities. From \cite{Schlomiuk-Vulpe:2005} we get equivalent formulas for the infinite singular points. These bifurcation surfaces are all algebraic and they are the following:

\medskip

\noindent \textbf{Bifurcation surfaces in $\R^3$ due to multiplicities of singularities}

\medskip

\noindent {\bf (${\cal S}_{1}$)} This is the bifurcation surface due to multiplicity of infinite singularities as detected by the coefficients of the divisor $D_\R(P,Q;Z)= \sum_{W\in \{Z=0\}\cap \C\PP^2} I_W (P,Q) W$, (here $I_W (P,Q)$ denotes the intersection multiplicity of $P=0$ with $Q=0$ at the point $W$ situated on the line at infinity, i.e. $Z=0$) whenever $deg((D_\R(P,Q;Z)))>0$. This occurs when at least one finite singular point collides with at least one infinite point. More precisely this happens whenever the homogenous polynomials of degree two, $p_2$ and $q_2$, in $p$ and $q$ have a common root. In other words whenever \linebreak $\mu=$ Res$_x(p_2,q_2)/y^4= 0$. The equation of this surface is $$\mu = k - 2hn + n^2 = 0.$$

\medskip

\noindent {\bf (${\cal S}_{3}$)}\footnote{The numbers attached to these bifurcations surfaces do not appear here in increasing order. We just kept the same enumeration used in \cite{Artes-Llibre-Schlomiuk:2006} to maintain coherence even though some of the numbers in that enumeration do not occur here.} Since this family already has a saddle--node at the origin, the invariant $\mathbb{D}$, as defined in \cite{Artes-Llibre-Schlomiuk:2006}, is always zero. Since our normal form does not allow the existence of a finite singular point of multiplicity 3, the only possible bifurcation related to collisions of finite singularities with themselves is whether the other two finite singularities are either real, or complex, or form a double one. This phenomenon is captured by the T--comitant $\mathbb{T}$ as proved in \cite{Artes-Llibre-Vulpe:2008}. The equation of this surface is $$\mathbb{T} = h^2 - k = 0.$$

\medskip

\noindent {\bf (${\cal S}_{5}$)} Since this family already has a saddle--node at infinity, the invariant $\eta$, as defined in \cite{Artes-Llibre-Schlomiuk:2006}, is always zero. In this sense, we have to consider a bifurcation related to the existence of either the double infinite singularity $\overline{\!{0\choose 2}\!\!}\ SN$ plus a simple one, or a triple one. This phenomenon is ruled by the T--comitant $M$ as proved in \cite{Schlomiuk-Vulpe:2005,Artes-Llibre-Schlomiuk-Vulpe:2012}. The equation of this surface is $$M = 2h - n = 0.$$

\medskip

\noindent \textbf{The surface of $C^{\infty}$ bifurcation points due to a strong saddle or a strong focus changing the sign of their traces (weak saddle or weak focus)}

\medskip

\noindent {\bf (${\cal S}_{2}$)} This is the bifurcation surface due to the weakness of finite singularities, which occurs when the trace of a finite singular point is zero. The equation of this surface is given by $$\mathcal{T}_{4} = -4h^2 + 4k + n^2 = 0,$$ where $\mathcal{T}_4$ is defined in \cite{Vulpe:2011}. This $\mathcal{T}_4$ is an invariant.

This bifurcation can produce a topological change if the weak point is a focus or just a $C^\infty$ change if it is a saddle, except when this bifurcation coincides with a loop bifurcation associated with the same saddle, in which case, the change may also be topological.

\medskip

\noindent \textbf{The surface of $C^{\infty}$ bifurcation due to a node becoming a focus}

\medskip

\noindent {\bf (${\cal S}_{6}$)} This surface will contain the points of the parameter space where a finite node of the system turns into a focus. This surface is a $C^{\infty}$ but not a topological bifurcation surface. In fact, when we only cross the surface (${\cal S}_{6}$) in the bifurcation diagram, the topological phase portraits do not change. However, this surface is relevant for isolating the regions where a limit cycle surrounding an antisaddle cannot exist. Using the results of \cite{Artes-Llibre-Vulpe:2008}, the equation of this surface is given by $W_{4} = 0$, where $$\begin{aligned} W_{4} =& -48h^4 + 32h^{2}k + 16k^{2} + 64h^{3}n\\ & -64hkn - 24h^{2}n^{2} + 24kn^{2} + n^{4}.\\ \end{aligned}$$

\noindent \textbf{Bifurcation surface in $\R^3$ due to the presence of another invariant straight line}

\medskip

\noindent {\bf (${\cal S}_{8}$)} This surface will contain the points of the parameter space where another invariant straight line appears apart from $\{y=0\}$. This surface is split in some regions. Depending on these regions, the straight line may contain connections of separatrices from different saddles or not. So, in some cases, it may imply a topological bifurcation and, in others, just a $C^{\infty}$ bifurcation. The equation of this surface is given by $$\operatorname{Het} = h = 0.$$

\medskip

These, except (${\cal S}_{8}$), are all the bifurcation surfaces of singularities of systems \eqref{eqsna} in the parameter space and they are all algebraic. We shall discover another bifurcation surface not necessarily algebraic and on which the systems have global connection of separatrices different from that given by (${\cal S}_{8}$). The equation of this bifurcation surface can only be determined approximately by means of numerical tools. Using arguments of continuity in the phase portraits we can prove the existence of this not necessarily algebraic component in the region where it appears, and we can check it numerically. We will name it the surface (${\cal S}_{7}$).


\begin{remark} \rm
Even though we can draw a 3--dimensional picture of the algebraic bifurcation surfaces of singularities in $\R^3$, it is pointless to try to see a single 3--dimensional image of all these bifurcation surfaces together in the space $\R^3$. As we shall see later, the full partition of the parameter space obtained from all these bifurcation surfaces has 66 parts. 
\end{remark}

By the previous remark we shall foliate the 3--dimensional bifurcation diagram in $\R^3$ by planes $h=h_{0}$, $h_{0}$ constant. We shall give pictures of the resulting bifurcation diagram on these planar sections on an affine chart on $\R^2$. In order to detect the key values for this foliation, we must find the values of parameters where the surfaces have singularities and/or intersect to each other. As we mentioned before, we will be only interested in non-negative values of $h$ to construct the bifurcation diagram.

The following set of sixteen results study the singularities of each surface and the simultaneous intersection points of the bifurcation surfaces, or the points or curves where two bifurcation surfaces are tangent.

As the final bifurcation diagram is quite complex, it is useful to introduce colors which will be used to talk about the bifurcation points:

    \begin{enumerate}[(a)]
        \item the curve obtained from the surface (${\cal S}_{1}$) is drawn in blue (a finite singular point collides with an infinite one);

        \item the curve obtained from the surface (${\cal S}_{2}$) is drawn in yellow (when the trace of a singular point becomes zero);

        \item the curve obtained from the surface (${\cal S}_{3}$) is drawn in green (two finite singular points collide);

        \item the curve obtained from the surface (${\cal S}_{5}$) is drawn in red (three infinite singular points collide);

        \item the curve obtained from the surface (${\cal S}_{6}$) is drawn in black (an antisaddle is on the edge of turning from a node to a focus or vice versa);

        \item the curve obtained from the surface (${\cal S}_{7}$) is drawn in purple (the connection of separatrices); and

        \item the curve obtained from the surface (${\cal S}_{8}$) is also drawn in purple (presence of an invariant straight line). We draw it as a continuous curve if it implies a topological change or as a dashed curve if not.
    \end{enumerate}

We use the same color for (${\cal S}_{7}$) and (${\cal S}_{8}$) since both surfaces deal with connections of separatrices mostly.

\begin{lemma}\label{lem:lemma1A} Concerning the singularities of the surfaces, it follows that:
    \begin{enumerate}[(i)]
        \item (${\cal S}_{1}$), (${\cal S}_{2}$), (${\cal S}_{3}$), (${\cal S}_{5}$) and (${\cal S}_{8}$) have no singularities;

        \item (${\cal S}_{6}$) has singularities at the point $(0,0,0)$ and along the straight line $(h,0,2h)$.
    \end{enumerate}
\end{lemma}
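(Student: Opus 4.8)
The plan is to treat each surface as the zero set of an explicit polynomial in the parameters $(h,k,n)$ and to determine its singular points by the standard criterion: a point of the variety $\{F=0\}$ is singular precisely when $\nabla F = (F_h, F_k, F_n)$ vanishes there simultaneously with $F$. For part (i), the surfaces $({\cal S}_1)$ through $({\cal S}_8)$ listed as smooth are all given by polynomials that are either affine-linear or have a nonvanishing partial derivative identically; for instance $\operatorname{Het}=h$ has $\nabla(\operatorname{Het})=(1,0,0)\neq 0$ everywhere, $M=2h-n$ has gradient $(2,0,-1)$, $\mu=k-2hn+n^2$ has $\partial_k\mu = 1 \neq 0$, $\mathbb{T}=h^2-k$ has $\partial_k\mathbb{T}=-1\neq 0$, and $\mathcal{T}_4=-4h^2+4k+n^2$ has $\partial_k\mathcal{T}_4 = 4 \neq 0$. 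In each case a single partial derivative is a nonzero constant, so the gradient never vanishes and the surface is a smooth (regular) submanifold; I would simply list these five gradients and observe the nonvanishing entry.

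For part (ii) I would work with $W_4 = -48h^4 + 32h^2k + 16k^2 + 64h^3 n - 64hkn - 24h^2n^2 + 24kn^2 + n^4$ and compute its three partial derivatives explicitly, then solve the system $W_4 = \partial_h W_4 = \partial_k W_4 = \partial_n W_4 = 0$. The derivative $\partial_k W_4 = 32h^2 + 32k - 64hn + 24n^2$ is the cleanest equation and can be solved for $k$ in terms of $h,n$; substituting into $\partial_n W_4$ and $\partial_h W_4$ should collapse the system. I expect the resolution to reveal two components of the singular locus: the isolated point $(0,0,0)$ (where $W_4$ vanishes to order four) and the line $(h,0,2h)$. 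To verify the line I would substitute $k=0$, $n=2h$ directly into $W_4$ and each partial: one checks $W_4(h,0,2h) = -48h^4 + 0 + 128h^4 - 0 - 96h^4 + 0 + 16h^4 = 0$, and similarly that all three partials vanish along this line, confirming it lies in the singular set; conversely the case analysis from $\partial_k W_4 = 0$ shows these are the only solutions.

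The main obstacle is the bookkeeping in the three-variable polynomial elimination for $W_4$: although each individual partial derivative is elementary, eliminating $k$ and then handling the resulting coupled equations in $h$ and $n$ requires care to be sure no spurious branches or additional singular points are missed, and one must double-check that the point $(0,0,0)$ is genuinely a singular point of the surface rather than an isolated point not lying on any one-dimensional stratum — i.e. that it is a limit of smooth points of $\{W_4=0\}$. This is where I would be most careful, factoring $W_4$ or its gradient components as far as possible (noting, for example, that every monomial of $W_4$ has total degree four so the origin is automatically a singular point of multiplicity $\geq 2$) to make the argument airtight. Everything else reduces to inspection of linear or near-linear polynomials.
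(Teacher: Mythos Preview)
Your approach is essentially identical to the paper's: compute the gradient of each defining polynomial and check where it vanishes. The paper's proof is in fact shorter than your outline---it simply asserts that the gradients of the five polynomials in (i) are never null, and that $\nabla W_4$ vanishes exactly at the origin and along $(h,0,2h)$.

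One small correction: you describe the singular locus of $({\cal S}_6)$ as having \emph{two} components, the ``isolated point'' $(0,0,0)$ and the line $(h,0,2h)$. But $(0,0,0)$ is the point $h=0$ on that very line, so the singular set is a single line; the lemma's phrasing is slightly redundant rather than indicating two distinct strata. Your worry about whether the origin is ``genuinely a singular point of the surface rather than an isolated point not lying on any one-dimensional stratum'' is therefore moot.
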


\begin{proof} It is easy to see that the gradient vectors of each one of the surfaces (${\cal S}_{1}$), (${\cal S}_{2}$), (${\cal S}_{3}$), (${\cal S}_{5}$) and (${\cal S}_{8}$) are never null for all $(h,k,n) \in \R^3$; so \emph{(i)} is proved. In order to prove \emph{(ii)} we compute the gradient of $W_{4}$ and we verify that it is null whenever $h=k=n=0$ and along the straight line $(h,0,2h)$.
\end{proof}

\begin{lemma}\label{lem:lemma2A} The surfaces (${\cal S}_{1}$) and (${\cal S}_{2}$) intersect along the straight line $(h,0,2h)$ and along the curve $(h,8h^{2}/9,2h/3)$, for all $h \in \R$.
\end{lemma}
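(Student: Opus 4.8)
The plan is to treat the intersection $(\mathcal{S}_1)\cap(\mathcal{S}_2)$ as the solution set of the polynomial system $\mu=0$, $\mathcal{T}_4=0$ in $\R^3$ with coordinates $(h,k,n)$, and to eliminate one variable. Recall from the descriptions of the surfaces that $(\mathcal{S}_1)$ is $\mu=k-2hn+n^2=0$ and $(\mathcal{S}_2)$ is $\mathcal{T}_4=-4h^2+4k+n^2=0$. Since both equations are linear in $k$, the natural first step is to solve $\mu=0$ for $k$, namely $k=2hn-n^2$, and substitute into $\mathcal{T}_4=0$. This substitution involves no division, so no solutions are gained or lost; the intersection is therefore in bijection with the zero set of the resulting equation in $(h,n)$ together with the formula $k=2hn-n^2$.

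Carrying out the substitution gives $-4h^2+4(2hn-n^2)+n^2 = -4h^2+8hn-3n^2=0$, i.e. $3n^2-8hn+4h^2=0$. The second step is to factor this quadratic form in $(h,n)$; its discriminant as a quadratic in $n$ is $64h^2-48h^2=16h^2$, a perfect square, so it factors over $\R[h,n]$ as $(n-2h)(3n-2h)=0$. This exhibits the intersection as the union of the two curves $n=2h$ and $n=2h/3$, each parametrised by $h\in\R$.

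The final step is to back-substitute each branch into $k=2hn-n^2$. For $n=2h$ one gets $k=4h^2-4h^2=0$, giving the straight line $(h,0,2h)$; for $n=2h/3$ one gets $k=4h^2/3-4h^2/9=8h^2/9$, giving the curve $(h,8h^2/9,2h/3)$. Since the substitution and factorisation account for every solution, these are exactly the components of $(\mathcal{S}_1)\cap(\mathcal{S}_2)$, which is the claim.

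There is no serious obstacle here: the argument is pure elimination and the only point requiring a line of justification is that solving $\mu=0$ linearly for $k$ loses nothing, so the two listed curves are the \emph{entire} intersection and not merely part of it. One may optionally remark that the first branch $(h,0,2h)$ coincides with the singular locus of $(\mathcal{S}_6)$ identified in Lemma~\ref{lem:lemma1A}, which foreshadows the more delicate tangency analysis in the subsequent lemmas.
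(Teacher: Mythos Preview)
Your proposal is correct and follows essentially the same approach as the paper: both argue by solving the two defining equations simultaneously, and the paper's proof is in fact terser than yours, merely asserting that for $h\neq 0$ one obtains the two stated curves while for $h=0$ the only solution is the origin. Your explicit elimination of $k$ and factorisation of the resulting quadratic in $(h,n)$ simply spell out what the paper leaves implicit.
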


\begin{proof} By solving simultaneously both equations of the surfaces (${\cal S}_{1}$) and (${\cal S}_{2}$) for all $h \neq 0$, we obtain the straight line $(h,0,2h)$ and the curve $(h,8h^{2}/9,2h/3)$; if $h=0$, the only simultaneous solution is the origin.
\end{proof}

The proofs of the next lemmas are analogous to Lemma \ref{lem:lemma2A}, except when a different proof is included.

\begin{lemma}\label{lem:lemma3A} If $h=0$, the surfaces (${\cal S}_{1}$) and (${\cal S}_{3}$) intersect at the origin and, if $h \neq 0$, both surfaces intersect along the curve $(n,n^{2},n)$, $n \in \R$.
\end{lemma}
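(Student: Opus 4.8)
The plan is to mimic the proof of Lemma~\ref{lem:lemma2A}: write down the defining equations of the two surfaces and eliminate one variable. Recall that $({\cal S}_{1})$ is the surface $\mu = k - 2hn + n^{2} = 0$ and $({\cal S}_{3})$ is the surface $\mathbb{T} = h^{2} - k = 0$, both recorded in Sec.~\ref{sec:bifur_a}.

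First I would use $\mathbb{T} = 0$ to solve for $k$, namely $k = h^{2}$, and substitute this into $\mu = 0$. This gives
\[
    h^{2} - 2hn + n^{2} = (h-n)^{2} = 0 ,
\]
whose only real solution is $n = h$. Reinserting $n = h$ into $k = h^{2}$ shows that the simultaneous zero set of $\mu$ and $\mathbb{T}$ is precisely $\{\,(h,k,n) : n = h,\ k = h^{2}\,\}$, that is, the curve $(n,n^{2},n)$ with $n \in \R$ (here the running parameter is the third coordinate). If $h = 0$, these relations force $n = 0$ and $k = 0$, so the intersection collapses to the single point $(0,0,0)$; this settles the two cases in the statement.

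Since this is a direct elimination between two low--degree polynomials, I do not anticipate any genuine obstacle. The only feature worth emphasizing is that after the substitution the remaining equation is the perfect square $(h-n)^{2} = 0$: the two surfaces are therefore tangent along the curve $(n,n^{2},n)$ rather than meeting transversally. Concretely, in a section $h = h_{0}$ the curve $({\cal S}_{1})$ is the parabola $k = -(n-h_{0})^{2} + h_{0}^{2}$ and the curve $({\cal S}_{3})$ is the horizontal line $k = h_{0}^{2}$, which touches that parabola at its vertex $(n,k) = (h_{0},h_{0}^{2})$; this tangency is the detail one must keep in mind when drawing the planar sections of the bifurcation diagram.
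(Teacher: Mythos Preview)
Your argument is correct and follows the same strategy indicated in the paper: solve the two polynomial equations $\mu = 0$ and $\mathbb{T} = 0$ simultaneously, exactly as in the proof of Lemma~\ref{lem:lemma2A}. The additional remark that the intersection is a tangency (since the substitution yields the perfect square $(h-n)^{2}=0$) is a correct and useful observation, though the paper does not make it explicit here.
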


\begin{lemma}\label{lem:lemma4A} For all $h \in \R$, the surfaces (${\cal S}_{1}$) and (${\cal S}_{5}$) intersect along the straight line $(h,0,2h)$.
\end{lemma}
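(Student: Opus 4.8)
The plan is to mimic the elimination argument used in Lemma \ref{lem:lemma2A}: solve simultaneously the defining equations of the two surfaces, namely $\mu = k - 2hn + n^2 = 0$ for $(\mathcal{S}_1)$ and $M = 2h - n = 0$ for $(\mathcal{S}_5)$, treating $h$ as a parameter.

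First I would use the equation of $(\mathcal{S}_5)$ to express $n$ in terms of $h$, obtaining $n = 2h$. Substituting this into $\mu = 0$ gives $k - 2h(2h) + (2h)^2 = k - 4h^2 + 4h^2 = k = 0$. Hence every common solution has $k = 0$ and $n = 2h$, i.e. it lies on the line $\{(h,0,2h) : h \in \R\}$; conversely, each such point clearly satisfies both equations. This yields the claimed intersection set for all $h \in \R$, with no special case at $h = 0$ needed since the substitution is valid for every value of $h$.

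There is no real obstacle here: the computation is a one-step linear substitution followed by an algebraic identity, entirely parallel to (and simpler than) the proof of Lemma \ref{lem:lemma2A}. The only thing worth a remark is that, unlike some of the other pairwise intersections, the elimination does not degenerate at $h = 0$, so a single uniform statement covers the whole parameter range.

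\begin{proof}
Solving simultaneously $\mu = k - 2hn + n^2 = 0$ and $M = 2h - n = 0$, the second equation gives $n = 2h$, and substituting into the first yields $k - 4h^2 + 4h^2 = k = 0$. Thus the common zero set of $\mu$ and $M$ is exactly the straight line $(h,0,2h)$, $h \in \R$.
\end{proof}
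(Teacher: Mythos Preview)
Your proof is correct and follows exactly the approach the paper intends: as stated after Lemma~\ref{lem:lemma2A}, the proofs of the subsequent lemmas (including this one) are analogous, namely solving the two defining equations simultaneously. Your substitution $n=2h$ from $M=0$ into $\mu=0$ to obtain $k=0$ is precisely this method, and your observation that no separate treatment of $h=0$ is required is accurate.
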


\begin{lemma}\label{lem:lemma5A} For all $h \in \R$, the surfaces (${\cal S}_{1}$) and (${\cal S}_{6}$) intersect along the straight line $(h,0,2h)$ and along the curve $(h,48h^{2}/49,6h/7)$.
\end{lemma}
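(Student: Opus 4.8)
The plan is to follow the same computational recipe used in Lemma~\ref{lem:lemma2A}: intersect the zero sets of the two defining invariants, solve the resulting algebraic system, and describe the solution set as a finite union of curves parametrized by $h$. Recall that the surface $({\cal S}_1)$ is $\mu = k - 2hn + n^2 = 0$ and the surface $({\cal S}_6)$ is $W_4 = 0$, where
\[
W_4 = -48h^4 + 32h^2 k + 16 k^2 + 64 h^3 n - 64 h k n - 24 h^2 n^2 + 24 k n^2 + n^4.
\]
So the task is to solve the system $\{k = 2hn - n^2,\ W_4 = 0\}$ in $\R^3$.

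First I would eliminate $k$ by substituting $k = 2hn - n^2$ into $W_4$. This gives a polynomial identity purely in $h$ and $n$; one expects (because both surfaces pass through the line $(h,0,2h)$, as already noted in Lemmas~\ref{lem:lemma1A} and~\ref{lem:lemma4A}) that the resulting polynomial factors, with one factor corresponding to $n = 2h$ (i.e.\ $k=0$, the line $(h,0,2h)$) and another factor cutting out the curve $(h, 48h^2/49, 6h/7)$, which corresponds to $n = 6h/7$. Concretely, after the substitution I would expect $W_4|_{k=2hn-n^2}$ to be divisible by $(n-2h)$, and the cofactor to vanish precisely when $n = 6h/7$ (possibly up to a further factor like $(n-2h)$ with higher multiplicity, or a factor that is identically nonzero on the relevant range). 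Once the factorization $(n-2h)^a(7n-6h)^b = 0$ (for appropriate exponents $a,b$) is obtained, reading off $k$ from $k = 2hn - n^2$ on each branch yields $k = 0$ on $n=2h$ and $k = 2h(6h/7) - (6h/7)^2 = 12h^2/7 - 36h^2/49 = 48h^2/49$ on $n = 6h/7$, which is exactly the claimed curve $(h, 48h^2/49, 6h/7)$.

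The remaining point is the behaviour at $h=0$: there the line $(h,0,2h)$ and the curve $(h,48h^2/49,6h/7)$ both collapse to the origin, and one should check that the only common solution of $({\cal S}_1)$ and $({\cal S}_6)$ with $h=0$ is indeed $(0,0,0)$ — setting $h=0$ gives $k=-n^2$ from $({\cal S}_1)$ and then $W_4 = 16k^2 + 24kn^2 + n^4 = 16n^4 - 24 n^4 + n^4 = -7n^4$, which vanishes only for $n=0$, hence $k=0$. So the statement holds uniformly for all $h \in \R$ once we interpret the two families as passing through the origin at $h=0$.

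I do not anticipate a genuine obstacle here; this is a routine polynomial elimination. The only mild care needed is (a) to verify the factorization of $W_4|_{k=2hn-n^2}$ is complete, i.e.\ that no spurious extra real branch has been dropped — this means checking that the cofactor polynomial in $h,n$ has no additional real zero locus beyond $7n = 6h$ — and (b) to make sure the substitution $k = 2hn - n^2$ has not introduced or suppressed components relative to the honest simultaneous solution (it has not, since the substitution is an equivalence: on $({\cal S}_1)$, $k$ is a polynomial function of $h,n$). A brief symbolic computation settles both points.
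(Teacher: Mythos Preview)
Your proposal is correct and follows essentially the same approach as the paper, which simply states that the proof is analogous to Lemma~\ref{lem:lemma2A} (solve the two equations simultaneously). Your substitution indeed yields $W_4|_{k=2hn-n^2} = -(n-2h)^3(7n-6h)$, confirming both branches and showing that no extra real component is missed; your treatment of the case $h=0$ is also correct and more explicit than what the paper provides.
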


\begin{lemma}\label{lem:lemma6A} For all $h \in \R$, the surfaces (${\cal S}_{1}$) and (${\cal S}_{8}$) intersect along the straight line $(h,0,2h)$.
\end{lemma}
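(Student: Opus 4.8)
The claim is, as the paper indicates, of the same nature as Lemma~\ref{lem:lemma2A}: it asserts an equality of two subsets of the parameter space $\R^3$, and the natural route is a direct elimination between the two defining polynomials. The surface $({\cal S}_1)$ is the zero set of $\mu = k - 2hn + n^2$, and $({\cal S}_8)$ is the zero set of the polynomial $\operatorname{Het}$ that detects the appearance of a second invariant straight line of system~\eqref{eqsna} besides $\{y=0\}$. The plan is: use the equation $\operatorname{Het}=0$ to eliminate one coordinate, substitute into $\mu=0$, factor the resulting relation over $\R$, and read off the irreducible components of $({\cal S}_1)\cap({\cal S}_8)$.

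Carrying this out, I would first work in the generic leaf $h=h_0$ with $h_0\neq 0$, which is the foliation used throughout Sec.~\ref{sec:bifur_a}. There each surface cuts out a curve, and the two restricted equations have a common solution only for the single point $(h_0,0,2h_0)$; one simply checks that the eliminant has no other real root. Letting $h_0$ run over $\R\setminus\{0\}$ produces the punctured line $\{(h,0,2h):h\neq 0\}$. The degenerate leaf $h=0$ must be treated by hand: substituting $h=0$ into both equations, the only common point is the origin, which is exactly the missing point $(0,0,0)$ of the line. Taking the union, $({\cal S}_1)\cap({\cal S}_8)$ is the full straight line $(h,0,2h)$, $h\in\R$, as claimed. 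For later use in assembling the bifurcation diagram I would also record the geometric content of this coincidence: along $(h,0,2h)$ the extra invariant line and the finite–infinite collision measured by $\mu$ occur simultaneously, which is why this same line reappears in Lemmas~\ref{lem:lemma4A} and~\ref{lem:lemma5A}.

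There is no real conceptual difficulty here; the proof is a computation. The only points requiring care are (i) completeness — the factorization of the eliminant must be carried out over $\R$ so that no real component of the intersection is overlooked, and (ii) the special value $h=0$, where both surfaces degenerate and which therefore has to be checked separately rather than inferred from the generic slice. With these two verifications in place the lemma is established.
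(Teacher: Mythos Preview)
Your plan---eliminate one variable using $\operatorname{Het}=0$, substitute into $\mu=0$, factor, and read off the real components---is exactly what the paper does; its proof is nothing more than the remark that the argument is ``analogous to Lemma~\ref{lem:lemma2A}''. So strategically you are aligned with the paper.

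The gap is that you never write down the equation of $({\cal S}_8)$ and never actually perform the factorization; you simply assert that in each slice $h=h_0\neq 0$ ``the eliminant has no other real root''. This step does not survive verification. The displayed formula $\operatorname{Het}=h$ in the paper is a misprint: the slice descriptions (where $({\cal S}_8)$ is the line $k=0$ for both $h=0$ and $h=1$) and Lemmas~\ref{lem:lemma10A}, \ref{lem:lemma13A}, \ref{lem:lemma16A} are all consistent only with $\operatorname{Het}=k$; indeed $\{x=0\}$ is invariant for \eqref{eqsna} precisely when $k=0$. Substituting $k=0$ into $\mu=k-2hn+n^2$ gives $n(n-2h)=0$, so for $h_0\neq 0$ there are \emph{two} common points, $(h_0,0,0)$ and $(h_0,0,2h_0)$, not one. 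Your uniqueness claim is therefore false, and the lemma as stated appears to omit the second component $(h,0,0)$. Had you carried out the ``simple check'' you promise, you would have caught this; as written, the proposal reproduces the paper's omission rather than proving the statement.
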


\begin{lemma}\label{lem:lemma7A} For all $h \in \R$, the surfaces (${\cal S}_{2}$) and (${\cal S}_{3}$) intersect along the curve $(h,h^{2},0)$.
\end{lemma}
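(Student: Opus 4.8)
The plan is to proceed exactly as in the proof of Lemma \ref{lem:lemma2A}, by solving the two defining equations simultaneously. The surface $(\mathcal{S}_{2})$ is given by $\mathcal{T}_{4} = -4h^2 + 4k + n^2 = 0$ and the surface $(\mathcal{S}_{3})$ is given by $\mathbb{T} = h^2 - k = 0$. First I would use the equation of $(\mathcal{S}_{3})$ to eliminate one variable: from $\mathbb{T}=0$ we get $k = h^2$. This substitution is legitimate over the whole parameter space $\R^3$ with coordinates $(h,k,n)$, with no case split needed, since $\mathbb{T}=0$ determines $k$ as a polynomial function of $h$.

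Next I would substitute $k = h^2$ into the equation of $(\mathcal{S}_{2})$, obtaining $-4h^2 + 4h^2 + n^2 = n^2 = 0$, hence $n = 0$. Therefore the simultaneous solution set of $\{\mathcal{T}_4 = 0,\ \mathbb{T}=0\}$ is precisely $\{(h, h^2, 0) : h \in \R\}$, which is the claimed curve. Conversely, one checks immediately that every point $(h,h^2,0)$ satisfies both $\mathbb{T} = h^2 - h^2 = 0$ and $\mathcal{T}_4 = -4h^2 + 4h^2 + 0 = 0$, so the curve indeed lies in $(\mathcal{S}_2)\cap(\mathcal{S}_3)$; this gives equality of the two sets rather than just an inclusion.

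There is essentially no obstacle here: the computation is a single linear elimination, and unlike Lemma \ref{lem:lemma2A} or Lemma \ref{lem:lemma3A} there is not even a degenerate value of $h$ (such as $h=0$) to treat separately, because the relation $n^2=0$ forces $n=0$ uniformly in $h$. The only point worth noting is that the intersection is a genuine curve parametrized by $h$ (a parabola in the plane $n=0$), consistent with the enumeration in Fig.~\ref{fig:phase_a1} where this locus carries the label $2.3L$. Hence the statement follows.
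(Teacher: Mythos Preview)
Your proposal is correct and follows exactly the approach indicated in the paper, which states that the proof is analogous to Lemma~\ref{lem:lemma2A}: solve the two defining equations simultaneously. Your elimination $k=h^{2}$ followed by $n^{2}=0$ is precisely that computation, and your observation that no case split on $h$ is needed here is accurate.
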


\begin{lemma}\label{lem:lemma8A} For all $h \in \R$, the surfaces (${\cal S}_{2}$) and (${\cal S}_{5}$) intersect along the straight line $(h,0,2h)$.
\end{lemma}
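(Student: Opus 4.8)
The plan is to proceed exactly as in the proof of Lemma \ref{lem:lemma2A}, by solving simultaneously the defining equations of the two surfaces. The surface $({\cal S}_2)$ is given by $\mathcal{T}_4 = -4h^2 + 4k + n^2 = 0$ and the surface $({\cal S}_5)$ is given by $M = 2h - n = 0$. From the second equation we get $n = 2h$; substituting into the first yields $-4h^2 + 4k + 4h^2 = 4k = 0$, hence $k = 0$. Thus the common zero set is precisely the set of points $(h, k, n) = (h, 0, 2h)$ with $h \in \R$, which is the straight line claimed. I would then note that, unlike in Lemma \ref{lem:lemma2A}, there is no exceptional behaviour at $h = 0$: the point $(0,0,0)$ simply lies on this line.

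There is essentially no obstacle here; the computation is a one-line linear elimination. The only thing worth being slightly careful about is that both $\mathcal{T}_4$ and $M$ are being regarded as polynomials in $(h,k,n)$ over $\R^3$, so that solving $M = 0$ for $n$ introduces no spurious or missing solutions (the leading coefficient of $n$ in $M$ is a nonzero constant), and the resulting elimination is genuinely an equivalence rather than just an implication. Given that, the description "the surfaces $({\cal S}_1)$ and $({\cal S}_2)$ intersect along the straight line $(h,0,2h)$" in the statement — which, reading it together with the surface definitions, must refer to $({\cal S}_2)$ and $({\cal S}_5)$, consistent with the pattern of the surrounding lemmas — follows immediately, and the proof can simply invoke the remark preceding Lemma \ref{lem:lemma3A} that such proofs are analogous to that of Lemma \ref{lem:lemma2A}.
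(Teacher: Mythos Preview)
Your proof is correct and is exactly the approach the paper takes: the paper gives no separate argument for this lemma, relying on the remark that all such proofs are analogous to that of Lemma~\ref{lem:lemma2A}, i.e.\ simultaneous solution of the two defining equations. One small note: the statement as written already refers to $({\cal S}_{2})$ and $({\cal S}_{5})$, not to $({\cal S}_{1})$ and $({\cal S}_{2})$ as your final paragraph suggests, so there is no typo to reinterpret.
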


\begin{lemma}\label{lem:lemma9A} For all $h \in \R$, the surfaces (${\cal S}_{2}$) and (${\cal S}_{6}$) intersect along the straight line $(h,0,2h)$ and along the curve $(h,h^{2},0)$.
\end{lemma}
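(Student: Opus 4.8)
The plan is the same elimination strategy used in Lemma \ref{lem:lemma2A}: reduce the two-surface intersection to a single polynomial factorization by using the simpler equation to eliminate one coordinate. Since the equation of (${\cal S}_{2}$), namely $\mathcal{T}_{4}=-4h^{2}+4k+n^{2}=0$, is linear in $k$, I would solve it as $k=h^{2}-n^{2}/4$; this is valid for every fixed value of $h$, so no loss of generality occurs.

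The next step is to substitute $k=h^{2}-n^{2}/4$ into the quartic $W_{4}$ defining (${\cal S}_{6}$). Collecting terms, I expect the pure $h^{4}$ contributions ($-48h^{4}+32h^{4}+16h^{4}$) and the $h^{3}n$ contributions ($64h^{3}n-64h^{3}n$) to cancel, so that the restriction of $W_{4}$ to (${\cal S}_{2}$) becomes a binary quartic in $h$ and $n$; I anticipate it equals $-4n^{2}(2h-n)^{2}$, which should be confirmed by checking the coefficients of $h^{2}n^{2}$, $hn^{3}$ and $n^{4}$. Granting this, the simultaneous solution set of (${\cal S}_{2}$) and (${\cal S}_{6}$) is cut out by $n^{2}(2h-n)^{2}=0$, i.e. by $n=0$ or $n=2h$.

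Finally I would back-substitute into $k=h^{2}-n^{2}/4$: the branch $n=0$ yields $k=h^{2}$, giving the curve $(h,h^{2},0)$, and the branch $n=2h$ yields $k=h^{2}-h^{2}=0$, giving the straight line $(h,0,2h)$. At $h=0$ both branches collapse to the origin, in agreement with a direct check (there $W_{4}|_{\mathcal{S}_{2}}=-4n^{4}$, forcing $n=0$), so the statement holds uniformly for all $h\in\R$ with no separate case analysis.

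The only real obstacle is the bookkeeping in the substitution — verifying that the top-degree terms cancel and that the remaining expression is exactly $-4$ times a perfect square — but this is a routine symbolic computation of the same character as the one invoked for Lemma \ref{lem:lemma2A}, with no conceptual difficulty.
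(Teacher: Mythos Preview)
Your proposal is correct and follows essentially the same approach as the paper, which simply declares this lemma ``analogous to Lemma~\ref{lem:lemma2A}'' (solve the two equations simultaneously). Your substitution $k=h^{2}-n^{2}/4$ and the resulting factorization $W_{4}\big|_{\mathcal{S}_{2}}=-4n^{2}(2h-n)^{2}$ are exactly the computation that justifies the paper's one-line reference, so nothing is missing.
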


\begin{lemma}\label{lem:lemma10A} For all $h \in \R$, the surfaces (${\cal S}_{2}$) and (${\cal S}_{8}$) intersect along the straight lines $(h,0,-2h)$ and $(h,0,2h)$.
\end{lemma}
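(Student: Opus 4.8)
The plan is to carry out the same elementary elimination the paper presents as the template for the lemmas following Lemma~\ref{lem:lemma2A}: to describe the intersection of the two surfaces by solving their defining equations simultaneously. First I would impose the equation of (${\cal S}_{8}$) --- which, for family \eqref{eqsna}, is $k=0$, the parameter locus where the second invariant straight line $\{x=0\}$ arises --- and substitute it into $\mathcal{T}_{4}=-4h^{2}+4k+n^{2}=0$, the equation of (${\cal S}_{2}$). This leaves the single relation $n^{2}-4h^{2}=0$ in the two remaining free parameters $h$ and $n$, which over $\R$ factors as $(n-2h)(n+2h)=0$. Reading off the two branches and keeping $k=0$, the common zero set is $\{(h,0,2h):h\in\R\}\cup\{(h,0,-2h):h\in\R\}$, i.e.\ exactly the two straight lines in the statement.

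The second step is the reverse inclusion --- verifying that each of those two lines really does lie on both surfaces --- which is immediate: the $k$-coordinate vanishes identically along each line, so they lie on (${\cal S}_{8}$), and substituting $k=0$ and $n=\pm 2h$ into $-4h^{2}+4k+n^{2}$ returns $0$, so they lie on (${\cal S}_{2}$) as well. For completeness I would also note that the two lines meet at the origin (both degenerate to $(0,0,0)$ when $h=0$), the kind of $h=0$ coincidence already recorded in Lemmas~\ref{lem:lemma2A}--\ref{lem:lemma9A}.

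I do not expect any real obstacle here: this is a one-step elimination of exactly the type the paper calls ``analogous to Lemma~\ref{lem:lemma2A}''. The only matter requiring a shred of care is completeness --- checking that the factorization $n^{2}-4h^{2}=(n-2h)(n+2h)$ accounts for every irreducible component and that imposing $k=0$ neither drops nor introduces a spurious branch --- but since after that substitution one is left with a single quadratic in two variables admitting a transparent real factorization, this bookkeeping is immediate.
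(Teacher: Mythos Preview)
Your proposal is correct and follows exactly the template the paper uses for this block of lemmas (solve the two defining equations simultaneously, as in Lemma~\ref{lem:lemma2A}). One small remark: you take the equation of $({\cal S}_{8})$ to be $k=0$, while the paper's displayed definition reads $\operatorname{Het}=h=0$; your choice is the right one --- the slice descriptions in Sec.~\ref{sec:bifur_a} (where $({\cal S}_{8})$ is the line $k=0$ on both $h=0$ and $h=1$) and the companion Lemmas~\ref{lem:lemma6A}, \ref{lem:lemma13A}, \ref{lem:lemma15A}, \ref{lem:lemma16A} all confirm that $({\cal S}_{8})$ is $k=0$, and the displayed $h=0$ is a typo.
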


\begin{lemma}\label{lem:lemma11A} For all $h \in \R$, the surfaces (${\cal S}_{3}$) and (${\cal S}_{5}$) intersect along the curve $(h,h^{2},2h)$.
\end{lemma}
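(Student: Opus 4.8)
The plan is to solve simultaneously the two polynomial equations defining the surfaces, exactly in the spirit of the proof of Lemma \ref{lem:lemma2A}. Recall that the surface $(\mathcal{S}_3)$ is given by $\mathbb{T} = h^2 - k = 0$ and the surface $(\mathcal{S}_5)$ is given by $M = 2h - n = 0$. First I would read off from the equation of $(\mathcal{S}_5)$ that $n = 2h$, which places no restriction on $h$; substituting back is unnecessary since $(\mathcal{S}_3)$ does not involve $n$, and it gives directly $k = h^2$. Thus a point $(h,k,n)$ lies on both surfaces if and only if $k = h^2$ and $n = 2h$, i.e. if and only if it lies on the parametrized curve $(h, h^2, 2h)$, $h \in \R$. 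Conversely, every point of this curve manifestly annihilates both $\mathbb{T}$ and $M$, so the intersection is precisely this curve.

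There is no real obstacle here: the two equations decouple completely, $M$ being affine and $\mathbb{T}$ not involving $n$ at all. The only point worth recording is that, in contrast with the situation of Lemma \ref{lem:lemma2A}, neither defining polynomial factors, so the intersection is a single smooth (in fact rational) curve rather than a union of components, and one need not split the argument into a case $h = 0$ and a case $h \neq 0$. If desired, one can further observe that the gradients $\nabla\mathbb{T} = (2h, -1, 0)$ and $\nabla M = (2, 0, -1)$ are linearly independent for every value of $h$, so the two surfaces meet transversally along $(h, h^2, 2h)$ and the set-theoretic intersection carries no hidden multiplicity.
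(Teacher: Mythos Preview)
Your proof is correct and follows exactly the approach indicated in the paper, which states that the argument is analogous to that of Lemma~\ref{lem:lemma2A}: solve simultaneously the defining equations $h^{2}-k=0$ and $2h-n=0$ to obtain the curve $(h,h^{2},2h)$. The additional remarks on transversality are valid but go beyond what the paper records.
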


\begin{lemma}\label{lem:lemma12A} For all $h \in \R$, the surfaces (${\cal S}_{3}$) and (${\cal S}_{6}$) intersect along the curve $(h,h^{2},0)$.
\end{lemma}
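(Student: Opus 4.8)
The plan is to compute the intersection of the two surfaces directly by elimination, exploiting the fact that the equation of (${\cal S}_{3}$) is linear in $k$. First I would use the defining equation $\mathbb{T}=h^{2}-k=0$ of (${\cal S}_{3}$) to write $k=h^{2}$, and substitute this into the quartic $W_{4}$ defining (${\cal S}_{6}$). The expectation is that the substitution collapses most of the terms: the purely quartic-in-$h$ part $-48h^{4}+32h^{2}k+16k^{2}$ becomes $-48h^{4}+32h^{4}+16h^{4}=0$, the mixed cubic terms $64h^{3}n-64hkn$ become $64h^{3}n-64h^{3}n=0$, and the quadratic-in-$n$ terms $-24h^{2}n^{2}+24kn^{2}$ become $-24h^{2}n^{2}+24h^{2}n^{2}=0$, leaving only $W_{4}=n^{4}$.

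Having reduced $W_{4}$ to $n^{4}$ on the surface (${\cal S}_{3}$), the conclusion follows immediately: a point $(h,k,n)$ lies on both surfaces if and only if $k=h^{2}$ and $n^{4}=0$, i.e. if and only if $n=0$ and $k=h^{2}$. Thus the simultaneous solution set is precisely $\{(h,h^{2},0):h\in\R\}$, which is the claimed curve. I would also note in passing that this computation shows the two surfaces are in fact tangent along this curve (since $n^{4}$ vanishes to order four), consistent with the remark preceding the lemma that some of these intersections are tangencies rather than transverse crossings, though establishing tangency is not required for the statement as phrased.

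There is essentially no genuine obstacle here; the only thing to be careful about is the bookkeeping in the algebraic substitution, making sure each group of terms in $W_{4}$ is paired correctly so that the cancellations are verified rather than merely asserted. Since the paper explicitly remarks that the proofs of the lemmas following Lemma \ref{lem:lemma2A} are analogous (solving the two surface equations simultaneously), this lemma fits that template exactly: the linearity in $k$ makes the elimination trivial, and the residual polynomial $n^{4}$ has the single real root $n=0$.
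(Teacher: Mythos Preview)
Your proposal is correct and follows exactly the approach the paper indicates: the paper states that the proofs of these lemmas are analogous to Lemma~\ref{lem:lemma2A}, namely solving the two surface equations simultaneously, and your explicit substitution $k=h^{2}$ into $W_{4}$ to obtain $n^{4}=0$ is precisely that computation carried out in detail.
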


\begin{lemma}\label{lem:lemma13A} If $h=0$, the surfaces (${\cal S}_{3}$) and (${\cal S}_{8}$) intersect along the straight line $(0,0,n)$, $n \in \R$, and, if $h \neq 0$, they have no intersection.
\end{lemma}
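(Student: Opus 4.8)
The statement to prove is Lemma~\ref{lem:lemma13A}: the surfaces $({\cal S}_3)$ and $({\cal S}_8)$, i.e. $\mathbb{T}=h^2-k=0$ and $\operatorname{Het}=h=0$, intersect along the line $(0,0,n)$ when $h=0$ and do not intersect when $h\neq0$.

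\medskip

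The plan is to intersect the two defining equations directly, exactly as in Lemma~\ref{lem:lemma2A}. First I would write down the two equations of the surfaces: $({\cal S}_8)$ gives $h=0$, and $({\cal S}_3)$ gives $h^2-k=0$, i.e. $k=h^2$. Substituting $h=0$ into $k=h^2$ yields $k=0$, while $n$ remains completely free; hence in the plane $h=0$ the common locus is precisely the set $\{(0,0,n):n\in\R\}$, which is the claimed straight line. Conversely, if $h\neq0$, then the equation $h=0$ of $({\cal S}_8)$ has no solution at all with that value of $h$ fixed, so the section of $({\cal S}_8)$ by the plane $h=h_0\neq0$ is empty and a fortiori its intersection with $({\cal S}_3)$ is empty. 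This dichotomy is exactly what the lemma asserts, so the proof is a short substitution argument with no genuine obstacle.

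\medskip

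The only point that warrants a sentence of care is the phrasing: since the whole bifurcation diagram is being foliated by the planes $h=h_0$, the statement is really about the trace of these two surfaces on each such plane. For $h_0=0$ the trace of $({\cal S}_3)$ is the single point $k=0$ in the $(k,n)$-plane (a vertical line $\{k=0\}$ in $(k,n,h=0)$-space, parametrized by $n$), and the trace of $({\cal S}_8)$ is the entire plane $h=0$; their intersection is thus the line $(0,0,n)$. For $h_0\neq0$ the trace of $({\cal S}_8)$ is empty, giving the ``no intersection'' half. I do not anticipate any hard part here; the lemma is deliberately elementary, and the proof I would write is essentially one line invoking the explicit polynomial equations already displayed in Section~\ref{sec:bifur_a}, following verbatim the template set by the proof of Lemma~\ref{lem:lemma2A}.
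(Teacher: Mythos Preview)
Your proof is correct and follows essentially the same approach as the paper: both restrict the two defining equations $h^2-k=0$ and $h=0$ to the slice $h=0$ to obtain the line $(0,0,n)$, and note that for $h\neq0$ the equation $h=0$ of $({\cal S}_8)$ already has no solution, so the simultaneous system is empty. Your write-up is slightly more detailed, but the argument is the same direct substitution.
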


\begin{proof} By restricting the equations of both surfaces to $h=0$ and solving them simultaneously we obtain the straight line $(0,0,n)$, $n \in \R$. For all $h \neq 0$, the equation have no simultaneous solutions.
\end{proof}

\begin{lemma}\label{lem:lemma14A} For all $h \in \R$, the surfaces (${\cal S}_{5}$) and (${\cal S}_{6}$) intersect along the straight line $(h,0,2h)$.
\end{lemma}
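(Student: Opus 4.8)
The plan is to follow the same strategy as in the proof of Lemma~\ref{lem:lemma2A}, namely to solve the two defining equations simultaneously. The surface $({\cal S}_5)$ is the hyperplane $M = 2h - n = 0$, so it is naturally parametrized by $(h,k,n) = (h,k,2h)$ with $h$ and $k$ free. The first step is therefore to substitute $n = 2h$ into the equation $W_4 = 0$ defining $({\cal S}_6)$ and to simplify the resulting polynomial in $h$ and $k$.

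Carrying out this substitution, each monomial of $W_4$ becomes a polynomial in $h$ and $k$: the terms $64h^3 n$, $-64hkn$, $-24h^2 n^2$, $24kn^2$ and $n^4$ contribute $128h^4$, $-128h^2 k$, $-96h^4$, $96h^2 k$ and $16h^4$ respectively. Collecting powers of $h$ and $k$, the coefficient of $h^4$ (namely $-48 + 128 - 96 + 16$) vanishes and the coefficient of $h^2 k$ (namely $32 - 128 + 96$) also vanishes, so that $W_4|_{n=2h}$ reduces to $16k^2$. Hence the condition $W_4 = 0$ along $({\cal S}_5)$ forces $k = 0$, and the only simultaneous solution is the straight line $(h,0,2h)$, $h \in \R$. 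Conversely, one checks immediately that every point of this line lies on both surfaces, since $M(h,0,2h) = 0$ and $W_4(h,0,2h) = 0$.

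There is no genuine obstacle here; the only point requiring care is the bookkeeping in the two cancellations above, which is exactly what produces the clean outcome (a perfect square in $k$) rather than a one-parameter curve with $k$ depending on $h$, as happened for instance in Lemmas~\ref{lem:lemma5A} and \ref{lem:lemma9A}. Since the computation is elementary and uniform in $h$, no separate discussion of the case $h = 0$ is needed: there the line simply degenerates to the origin, which remains the unique common point of the two surfaces.
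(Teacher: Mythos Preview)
Your proof is correct and follows exactly the approach indicated in the paper, which simply states that the proof is analogous to that of Lemma~\ref{lem:lemma2A}: solve the two defining equations simultaneously. You have carried out this computation explicitly, substituting $n=2h$ from $M=0$ into $W_4$ and obtaining $W_4|_{n=2h}=16k^2$, which is precisely the calculation the paper leaves implicit.
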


\begin{lemma}\label{lem:lemma15A} For all $h \in \R$, the surfaces (${\cal S}_{5}$) and (${\cal S}_{8}$) intersect along the straight line $(h,0,2h)$.
\end{lemma}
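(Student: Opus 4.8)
The plan is to proceed exactly as in the proof of Lemma \ref{lem:lemma2A}: I would compute $({\cal S}_{5})\cap({\cal S}_{8})$ by solving the two defining equations simultaneously. Recall that $({\cal S}_{5})$ is the plane $M=2h-n=0$ and that $({\cal S}_{8})$ is the plane $\operatorname{Het}=0$. Both equations are affine--linear in the coordinates $(h,k,n)$, and the two planes are not parallel (their normal vectors are not proportional), so their intersection is automatically a single straight line. This is precisely why, unlike in Lemmas \ref{lem:lemma2A} and \ref{lem:lemma3A}, there is no exceptional value $h=0$ to be treated separately, and the statement holds for all $h\in\R$.

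Concretely, imposing the equation of $({\cal S}_{8})$ together with $M=2h-n=0$ forces $n=2h$ and leaves $h$ as the single free parameter; writing the common solutions in the normalized coordinate order $(h,k,n)$ used throughout Sec. \ref{sec:bifur_a} yields the family $(h,0,2h)$, $h\in\R$. The reverse inclusion is immediate, since every triple of this form annihilates both $M$ and $\operatorname{Het}$. Hence $({\cal S}_{5})\cap({\cal S}_{8})$ is exactly the straight line $(h,0,2h)$.

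In terms of difficulty, there is essentially no obstacle: in contrast with Lemma \ref{lem:lemma1A} (where one must locate the singular locus of the quartic $W_{4}$) or Lemma \ref{lem:lemma5A} (where a plane must be intersected with that quartic), here we only eliminate one linear equation against another, so the whole argument is a one--line computation. The only point that deserves any care at all is purely notational: recording the resulting line in the same normalized parametric form used for the neighbouring lemmas, rather than, say, in terms of $n$.
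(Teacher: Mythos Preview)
Your argument is correct and follows exactly the paper's approach, which simply refers back to Lemma~\ref{lem:lemma2A}: solve the two defining equations simultaneously and read off the line. The only point worth making explicit is the actual equation of $({\cal S}_{8})$, namely $k=0$ --- the displayed formula ``$\operatorname{Het}=h=0$'' in the paper is a misprint for $k=0$, as one sees from the slice descriptions and from Lemmas~\ref{lem:lemma10A}, \ref{lem:lemma13A} and \ref{lem:lemma16A}; with $k=0$ and $2h-n=0$ your line $(h,0,2h)$ follows at once.
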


\begin{lemma}\label{lem:lemma16A} For all $h \in \R$, the surfaces (${\cal S}_{6}$) and (${\cal S}_{8}$) intersect along the straight lines $(h,0,-6h)$ and $(h,0,2h)$.
\end{lemma}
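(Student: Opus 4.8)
The proof follows the same template as Lemmas~\ref{lem:lemma2A}--\ref{lem:lemma15A}: the intersection of the two surfaces is obtained by solving their defining equations simultaneously. The plan is to substitute the (linear) defining equation of $(\mathcal{S}_{8})$ into the equation $W_{4}=0$ of $(\mathcal{S}_{6})$, which leaves $h$ and $n$ as the two free parameters on $(\mathcal{S}_{8})$ and reduces the problem to a single polynomial identity in $h$ and $n$.

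Carrying this out, the restriction $W_{4}|_{(\mathcal{S}_{8})}$ is a polynomial in $h$ and $n$, homogeneous of degree four, and the central algebraic step is to verify the factorization
\[
  W_{4}|_{(\mathcal{S}_{8})} \;=\; (n-2h)^{3}\,(n+6h).
\]
Granted this, the zero set of $W_{4}$ on $(\mathcal{S}_{8})$ is exactly the union of the loci $n=2h$ and $n=-6h$, that is, the two straight lines $(h,0,2h)$ and $(h,0,-6h)$ of the statement, and this holds for every $h\in\R$ since the factorization is a polynomial identity. For $h=0$ the two lines collapse to the single point $(0,0,0)$, in accordance with Lemma~\ref{lem:lemma1A}; the triple factor $(n-2h)^{3}$ records that $(\mathcal{S}_{6})$ and $(\mathcal{S}_{8})$ meet with high-order contact along $(h,0,2h)$, which is consistent with the fact that several of the other bifurcation surfaces also pass through that line (see Lemmas~\ref{lem:lemma6A}, \ref{lem:lemma14A} and \ref{lem:lemma15A}).

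There is no real conceptual obstacle here: the argument is a one-variable elimination followed by a polynomial division. The two points that deserve attention are (i) checking that the displayed factorization is \emph{complete}, so that no spurious component of the intersection is overlooked --- equivalently, that $W_{4}|_{(\mathcal{S}_{8})}$ does not vanish identically, so that $(\mathcal{S}_{6})$ does not contain $(\mathcal{S}_{8})$ --- and (ii) recording correctly which parameter is eliminated by $(\mathcal{S}_{8})$, so that the two lines are parametrized as stated. Both are routine.
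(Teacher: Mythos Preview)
Your argument is correct and follows the paper's own template: the paper does not give a separate proof for Lemma~\ref{lem:lemma16A} but declares it analogous to Lemma~\ref{lem:lemma2A}, i.e.\ solve the two defining equations simultaneously, which is precisely what you do by restricting $W_{4}$ to $(\mathcal{S}_{8})$ and factoring. Your explicit identity $W_{4}|_{k=0}=(n-2h)^{3}(n+6h)$ and the remark on the triple factor along $(h,0,2h)$ add detail the paper omits, but the approach is identical.
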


Now we shall study the bifurcation diagram having as reference the values of $h$ where significant phenomena occur in the behavior of the bifurcation surfaces. As there is not any other critical value of $h$, except $h=0$, this is the only value where the behavior of the bifurcation surfaces changes critically. Recalling we are considering only non-negative values of $h$, we shall choose a positive value to be a generic case.

We take then the values:
    \begin{equation}
        \aligned
            h_{0} & = 0,\\
            h_{1} & = 1.\\
        \endaligned
    \end{equation}

The value $h_{0}$ corresponds to an explicit value of $h$ for which there is a bifurcation in the behavior of the systems on the slices. The value $h_{1}$ is just an intermediate point we call by a generic value of $h$ (see Figs. \ref{sliceh0A} and \ref{sliceh1A_nopurple}).

\begin{figure}
\centering
\psfrag{n}{$n$} \psfrag{k}{$k$}
\centerline{\psfig{figure=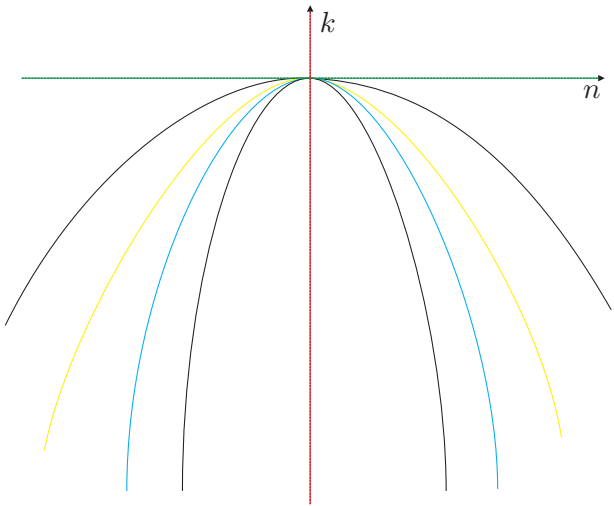,width=8cm}} \centerline {}
\caption{\small \label{sliceh0A} Slice of the parameter space for \eqref{eqsna} when $h=0$.}
\end{figure}

\begin{figure}
\centering
\psfrag{n}{$n$} \psfrag{k}{$k$}
\psfrag{1s2}{\small{$1s_{2}$}} \psfrag{1s3}{$1s_{3}$}
\psfrag{2s2}{$2s_{2}$} \psfrag{2s3}{$2s_{3}$} \psfrag{2s4}{$2s_{4}$}
\psfrag{6s2}{$6s_{2}$} \psfrag{8s2}{$8s_{2}$} \psfrag{8s3}{$8s_{3}$}
\psfrag{8s4}{$8s_{4}$} \psfrag{v4}{$v_{4}$} \psfrag{v5}{$v_{5}$}
\psfrag{v10}{$v_{10}$} \psfrag{v14}{$v_{14}$} \psfrag{v1}{$v_{1}$}
\psfrag{2.3l1}{$2.3\l_{1}$} \psfrag{1.8l1}{$1.8\l_{1}$} \psfrag{v3}{$v_{3}$}
\psfrag{3s1}{$3s_{1}$}
\centerline{\psfig{figure=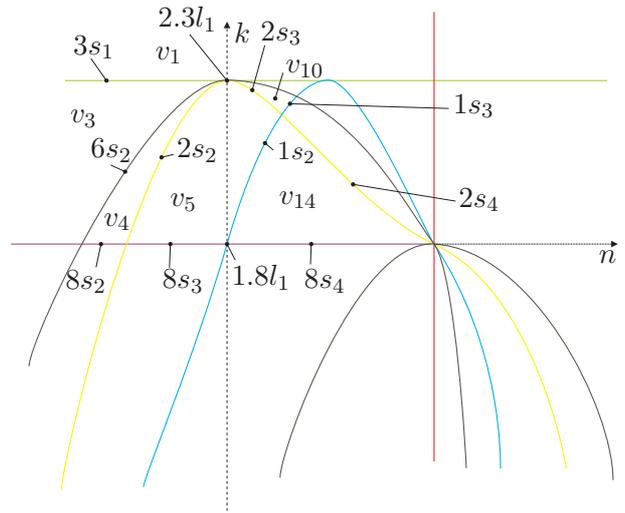,width=8cm}} \centerline {}
\caption{\small \label{sliceh1A_nopurple} Slice of the parameter space for \eqref{eqsna} when $h=1$.}
\end{figure}

We now describe the labels used for each part. The subsets of dimensions 3, 2, 1 and 0, of the partition of the parameter space will be denoted respectively by $V$, $S$, $L$ and $P$ for Volume, Surface, Line and Point, respectively. The surfaces are named using a number which corresponds to each bifurcation surface which is placed on the left side of the letter $S$. To describe the portion of the surface we place an index. The curves that are intersection of surfaces are named by using their corresponding numbers on the left side of the letter $L$, separated by a point. To describe the segment of the curve we place an index. Volumes and Points are simply indexed (since three or more surfaces may be involved in such an intersection).

We consider an example: the surface (${\cal S}_{1}$) splits into 6 different two--dimensional parts labeled from $1S_{1}$ to $1S_{6}$, plus some one--dimensional arcs labeled as $1.iL_{j}$ (where $i$ denotes the other surface intersected by (${\cal S}_{1}$) and $j$ is a number), and some zero--dimensional parts. In order to simplify the labels in Figs. \ref{sliceh0A_labels} and \ref{sliceh1A_purple_labels} we see \textbf{V1} which stands for the {\TeX} notation $V_{1}$. Analogously, \textbf{1S1} (respectively, \textbf{1.2L1}) stands for $1S_{1}$ (respectively, $1.2L_{1}$). And the same happens with other pictures.

All the bifurcation surfaces intersect on $h=0$. In fact, the equations of surfaces (${\cal S}_{1}$) and (${\cal S}_{2}$) are the parabolas $k+n^2=0$ and $4k+n^2=0$, respectively, when restricted to the plane $h=0$; the equations of (${\cal S}_{3}$), (${\cal S}_{5}$) and (${\cal S}_{8}$), restricted to $h=0$, are the straight lines $k=0$, $n=0$ and $k=0$, respectively; and surface (${\cal S}_{6}$) is the quartic $16k^{2}+24kn^{2}+n^{4}=0$ whose picture is the union of two parabolas with intersection at the origin. Finally, we note that all the elements above intersect at the origin of the bifurcation diagram when $h=0$.

As an exact drawing of the curves produced by intersecting the surfaces with slices gives us very small regions which are difficult to distinguish, and points of tangency are almost impossible to recognize, we have produced topologically equivalent pictures where regions are enlarged and tangencies are easy to observe. The reader may find the exact pictures in the web page {http://mat.uab.es/$\sim$artes/articles/qvfsn2SN02/}\linebreak{qvfsn2SN02.html.}

If we consider the value $h=1$, some changes in the bifurcation diagram happen. On the one hand, all the surfaces preserve their geometrical behavior, that is, surfaces (${\cal S}_{1}$) and (${\cal S}_{2}$) remain parabolas ($k-2n+n^{2}=0$ and $-4+4kn^{2}=0$, respectively); surfaces (${\cal S}_{3}$), (${\cal S}_{5}$) and (${\cal S}_{8}$) remain straight lines ($k-1=0$, $n-2=0$ and $k=0$, respectively); and surface (${\cal S}_{6}$) remains a quartic (with equation $-48+32k+16k^{2}+64n-64kn-24n^{2}+24kn^{2}+n^{4}=0$) whose picture is the union of two curves with intersection at the point $(1,0,2)$. On the other hand, compared to the case when $h=0$, there exist more intersection points among the surfaces and a new region appears between (${\cal S}_{3}$) and (${\cal S}_{8}$). All other regions, except this new ``middle'' one, remain topologically equivalent to the regions present in the case when $h=0$.

We recall that the black surface (${\cal S}_{6}$) (or $W_{4}$) means the turning of a finite antisaddle from a node to a focus. Then, according to the general results about quadratic systems, we could have limit cycles around such point.

\begin{remark} \label{rem:f-n} \rm
Wherever two parts of equal dimension $d$ are separated only by a part of dimension $d-1$ of the black bifurcation surface (${\cal S}_{6}$), their respective phase portraits are topologically equivalent since the only difference between them is that a finite antisaddle has turned into a focus without change of stability and without appearance of limit cycles. We denote such parts with different labels, but we do not give specific phase portraits in pictures attached to Theorems \ref{th:1.1} and \ref{th:1.2} for the parts with the focus. We only give portraits for the parts with nodes, except in the case of existence of a limit cycle or a graphic where the singular point inside them is portrayed as a focus. Neither do we give specific invariant description in Sec. \ref{sec:mainthm} distinguishing between these nodes and foci.
\end{remark}

\subsection{Bifurcation surfaces due to connections}

We now describe for each set of the partition on $h=1$ the local behavior of the flow around all the singular points. Given a concrete value of parameters of each one of the sets in this slice we compute the global phase portrait with the numerical program P4 \cite{Dumortier-Llibre-Artes:2006}. It is worth mentioning that many (but not all) of the phase portraits in this paper can be obtained not only numerically but also by means of perturbations of the systems of codimension one higher.

In this slice we have a partition in 2--dimensional regions bordered by curved polygons, some of them bounded, others bordered by infinity. Provisionally, we use low--case letters to describe the sets found algebraically so as not to interfere with the final partition described with capital letters. For each 2--dimensional region we obtain a phase portrait which is coherent with those of all their borders, except in one region. Consider the set $v_{1}$ in Fig. \ref{sliceh1A_nopurple}. In it we have only a saddle--node as finite singularity. When reaching the set $2.3\l_{1}$, we are on surfaces (${\cal S}_{2}$), (${\cal S}_{3}$) and (${\cal S}_{6}$) at the same time; this implies the presence of one more finite singularity (in fact, it is a cusp point) which is on the edge of splitting itself and give birth to finite saddle and antisaddle. Now, we consider the segments $2s_{2}$ and $2s_{3}$. By the Main Theorem of \cite{Vulpe:2011}, the corresponding phase portraits of these sets have a first--order weak saddle and a first--order weak focus, respectively. So, on $2s_{3}$ we have a Hopf bifurcation. This means that either in $v_{5}$ or $v_{10}$ we must have a limit cycle. In fact, it is in $v_{5}$. On the other hand, as we have a weak saddle on $2s_{2}$ and it is not detected a bifurcation surface intersecting this subset of loop type, neither its presence is forced to keep the coherence, its corresponding phase portrait is topologically equivalent to the portraits of $v_{4}$ and $v_{5}$. Since in $v_{5}$ we have a phase portrait topologically equivalent to the one on $2s_{2}$ (without limit cycles) and a phase portrait with limit cycles, this region must be split into two regions separated by a new surface (${\cal S}_{7}$) having at least one element $7S_{1}$ such that one region has limit cycle and the other does not, and the border $7S_{1}$ must correspond to a connection between separatrices. After numerical computations we check that it is the region $v_{5}$ the one which splits into $V_{5}$ without limit cycles and $V_{11}$ with one limit cycle.

The next result assures us the existence of limit cycle in any representative of the subset $v_{14}$ and it is needed to complete the study of $7S_{1}$.

\begin{lemma} \label{lemma_v14}
In $v_{14}$ there is always one limit cycle.
\end{lemma}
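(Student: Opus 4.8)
The plan is to pin down a concrete representative of the region $v_{14}$, identify the finite and infinite singularities for nearby parameter values, and then squeeze a limit cycle out of a Poincaré–Bendixson / index argument combined with the Hopf bifurcation already detected on the adjacent yellow surface. First I would locate $v_{14}$ in the slice $h=1$ of Fig.~\ref{sliceh1A_nopurple}: it is one of the bounded curved polygons whose boundary meets surface $(\mathcal{S}_2)$ (the yellow, weak-focus locus). Picking an interior point $(1,k_0,n_0)$ I would compute, from the normal form \eqref{eqsna}, the three finite singularities: the saddle--node at the origin plus the two other finite points whose existence is guaranteed because in $v_{14}$ we are on the side of $(\mathcal{S}_3)$ where $\mathbb{T}=h^2-k>0$, so these two points are real and distinct. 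One of them is an antisaddle; since $v_{14}$ lies on the focus side of $(\mathcal{S}_6)$ ($W_4\neq 0$ with the appropriate sign) this antisaddle is a focus. This is exactly the configuration in which property \eqref{item_iii}–\eqref{item_iv} of Sec.~\ref{sec:basicwf} (a quadratic system with an invariant line — here $\{y=0\}$ — has at most one limit cycle, and any limit cycle surrounds a focus) applies.

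Next I would invoke the Hopf bifurcation identified in the discussion preceding the lemma. On the arc $2s_3$ bordering (part of) this family the focus is a first-order weak focus, and crossing $(\mathcal{S}_2)$ changes the sign of its trace; the first Lyapunov quantity is nonzero there (again by the Main Theorem of \cite{Vulpe:2011}, since $2s_3$ carries a \emph{first-order} weak focus, not a higher-order one), so a single small-amplitude limit cycle is born and persists on one side. The argument then is: (i) on the weak-focus boundary arc the phase portrait has the focus with trace zero and no limit cycle (the weak focus being stable/unstable of order one absorbs no cycle there); (ii) immediately across into the region the focus changes stability; (iii) by continuity of the Poincaré return map on a transversal, and by the fact that the ``outer'' behaviour (the $\omega$-limit structure determined by the separatrix configuration, which is constant throughout $v_{14}$) does not change, a limit cycle must appear and cannot disappear without a further bifurcation surface — and none of $(\mathcal{S}_1),\dots,(\mathcal{S}_8)$ cuts through the interior of $v_{14}$. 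Hence the limit cycle, unique by \eqref{item_iv}, exists for \emph{every} representative of $v_{14}$.

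To make the Poincaré–Bendixson step rigorous I would exhibit, for the chosen representative, a trapping region: an annulus whose outer boundary is built from arcs of the invariant line $\{y=0\}$, a piece of a well-chosen algebraic curve (or simply a large arc near infinity analysed in the charts \eqref{26}–\eqref{27}, using that the line at infinity is invariant and that the infinite singularities — the $\overline{\!{0\choose 2}\!\!}\ SN$ plus the remaining simple one — are hyperbolic enough to control the flow there), and whose inner boundary is a small circle around the unstable focus on which the flow points outward. Since this annulus contains no singular point, Poincaré–Bendixson forces a periodic orbit, and \eqref{item_iv} upgrades ``a'' to ``exactly one''. Finally I would note that the qualitative features used — sign of $\mathbb{T}$, sign of $W_4$, the weak-focus order on the boundary, and the separatrix skeleton — are constant on the connected set $v_{14}$, so the conclusion transfers from the representative to all of $v_{14}$.

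The main obstacle I expect is step three: constructing an explicit trapping annulus uniformly valid over the whole region $v_{14}$, rather than just numerically for one parameter value. The outer boundary near infinity is delicate because one infinite singular point is the degenerate saddle--node $\overline{\!{0\choose 2}\!\!}\ SN$, so the usual ``large circle is a no-return curve'' argument needs the blow-up description of that point from Proposition~\ref{th2.19} and the charts \eqref{26}–\eqref{27}; getting the flow direction right on the parabolic sector of that infinite saddle--node, uniformly in $(k,n)$, is the technical heart of the matter. If a fully uniform barrier proves awkward, the fallback is to run the continuity-of-the-return-map argument instead, anchored at the Hopf point on $2s_3$ and propagated through $v_{14}$ using that no bifurcation surface separates the Hopf locus from the rest of the region — this trades an explicit barrier for a compactness/connectedness argument and still yields the lemma.
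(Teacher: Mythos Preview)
Your proposal rests on two factual errors about $v_{14}$ that undermine the whole strategy. First, the finite singularity structure: in $v_{14}$ one has $I_2=2$ (see Table~1), so \emph{both} non--saddle--node finite singularities are antisaddles --- concretely a focus and a node --- not one antisaddle and one saddle as your phrasing ``one of them is an antisaddle'' suggests. Second, the arc $2s_3$ does not border $v_{14}$; it separates $v_5$ from $v_{10}$ (this is exactly the Hopf producing the cycle in $V_{11}\subset v_5$ discussed just before the lemma). The region $v_{14}$ lies on the other side of the surface $\mu=0$: one reaches it from $V_{11}$ by crossing $8S_3$ and then $1S_2$, not by crossing any piece of $(\mathcal{S}_2)$. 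So your Hopf--plus--continuation argument is anchored at a boundary that is not there, and your annulus construction ``around the unstable focus'' ignores a second finite antisaddle whose presence changes the index bookkeeping entirely. Worse, the very purpose of this lemma in the paper is to \emph{rule out} an extra (separatrix--connection) bifurcation curve $7S_1$ landing on $1s_2$; a continuation argument that simply asserts ``no bifurcation surface cuts through'' begs the question in this context.

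The paper's proof avoids all of this with a direct, parameter--uniform argument that you are missing: compute that the product of the traces of the two antisaddles equals $\mu/\mathcal{T}_4>0$ in $v_{14}$, so the focus and the node have the \emph{same} stability; compute that the extra infinite singular point is a saddle (its Jacobian determinant in $U_1$ is $M^2\mu/k^2<0$) sitting in the even quadrants; use the invariant line $\{y=0\}$ and the sign of $\dot x$ on $\{x=0\}$ to see that the second quadrant is forward--invariant and the fourth backward--invariant. Two antisaddles of the same stability trapped in these invariant half--planes force, by Poincar\'e--Bendixson, a periodic orbit around one of them (necessarily the focus), and statement~(\ref{item_iv}) gives uniqueness. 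No Hopf, no barrier construction near the degenerate infinite saddle--node, and no continuation is needed.
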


\begin{proof}
We see that the subset $v_{14}$ is characterized by $\mu<0$, $\mathcal{T}_{4}<0$, $W_{4}<0$, $M>0$, $\mathbb{T}>0$, $k>0$ and $n>0$. Any representative of $v_{14}$ has the finite saddle--node at the origin with its eigenvectors on the axes and two more finite singularities, a focus and a node (the focus is due to $W_{4}<0$). We claim that these two other singularities are placed in symmetrical quadrants with relation to the origin (see Fig. \ref{prop_v14}). In fact, by computing the exact expression of each singular point $(x_1,y_1)$ and $(x_2,y_2)$ and multiplying their $x$--coordinates and $y$-coordinates we obtain $k/{\mu}$ and $1/{\mu}$, respectively, which are always negative since $k>0$ and $\mu <0$ in $v_{14}$. Besides, each one of them is placed in an even quadrant since the product of the coordinates of each antisaddles is never null and any representative gives a negative product. Moreover, both antisaddles have the same stability since the product of their traces is given by $\mu/\mathcal{T}_{4}$ which is always positive in $v_{14}$.

\begin{figure}
\centering
\psfrag{V14}{$v_{14}$}
\centerline{\psfig{figure=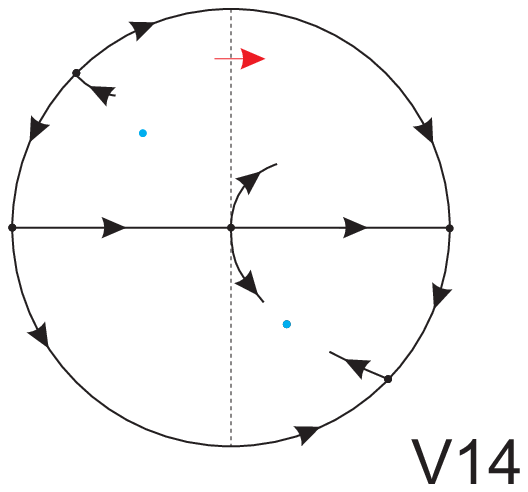,width=5cm}} \centerline {}
\caption{\small \label{prop_v14} The local behavior around each of the finite and infinite singularities of any representant of $v_{14}$. The red arrow shows the sense of the flow along the $y$-axis and the blue points are the focus and the node with same stability.}
\end{figure}

The infinite singularities of systems in $v_{14}$ are the saddle--node $\overline{\!{0\choose 2}\!\!}\ SN$ (recall the normal form \eqref{eqsnb}) and a saddle. In fact, the expression of the singular points in the local chart $U_{1}$ are $(0,0)$ and $((-2h+n)/k,0)$. We note that the determinant of the Jacobian matrix of the flow in $U_{1}$ at the second singularity is given by $$-\frac{(2 h-n)^2 \left(2 h n-k-n^2\right)}{k^2} = \frac{M^{2} \mu}{k^{2}},$$ which is negative since $\mu<0$ in $v_{14}$. Besides, this pair of saddles are in the second and the fourth quadrant because its first coordinate $(-2h+n)/k = -M/k$ is negative since $M>0$ and $k>0$ in $v_{14}$.

We also note that the flow along the $y$-axis is such that $\dot{x}>0$.

Since we have a pair of saddle points in the even quadrants, each of the finite antisaddles is in an even quadrant, no orbit can enter into the second quadrant and no orbit may leave the fourth one and, in addition, these antisaddles, a focus and a node, have the same stability, any phase portrait in $v_{14}$ must have at least one limit cycle in any of the even quadrants. Moreover, the limit cycle is in the second quadrant, because the focus is there since a saddle--node is born in that quadrant at $3s_{1}$, splits in two points when entering $v_{3}$ (both remain in the same quadrant since $x_{1}x_{2}=k/{\mu}<0$ and $y_{1}y_{2}=1/{\mu}<0$), the node turns into focus at $6s_{2}$ and the saddle moves to infinity at $1s_{2}$ appearing as node at the fourth quadrant when entering $v_{14}$. Furthermore, by the statement \eqref{item_iv} of Sec. \ref{sec:basicwf}, it follows the uniqueness of the limit cycle in $v_{14}$.
\end{proof}

Now, the following result states that the segment which splits the subset $v_{5}$ into the regions $V_{5}$ and $V_{11}$ has its endpoints well--determined.

\begin{proposition}
The endpoints of the surface $7S_{1}$ are $2.3\l_{1}$, intersection of surfaces (${\cal S}_{2}$) and (${\cal S}_{3}$), and $1.8\l_{1}$, intersection of surfaces (${\cal S}_{1}$) and (${\cal S}_{8}$).
\end{proposition}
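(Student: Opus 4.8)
The plan is to locate the two endpoints of the connection arc $7S_{1}$ from the algebraic strata bounding the region $v_{5}$ on the slice $h=1$, using the facts about limit cycles collected in Sec.~\ref{sec:basicwf} and Lemma~\ref{lemma_v14}. Since $\{y=0\}$ is invariant for \eqref{eqsna}, statement \eqref{item_iv} of Sec.~\ref{sec:basicwf} shows every system in $v_{5}$ has at most one limit cycle, and by statement \eqref{item_iii} that cycle surrounds a focus. Hence the limit cycle present in $V_{11}$ can disappear only through a Hopf bifurcation or through a homoclinic loop; as the Hopf locus through $v_{5}$ is the arc $2s_{3}\subset({\cal S}_{2})$, which lies on $\partial v_{5}$, the interior bifurcation $7S_{1}$ separating $V_{11}$ (one limit cycle) from $V_{5}$ (none) must be the homoclinic--loop locus, and its closure is an arc with both endpoints on $\partial v_{5}$. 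Lemma~\ref{lemma_v14} guarantees a limit cycle throughout $v_{14}$, so $7S_{1}$ does not enter $v_{14}$ and cannot acquire a spurious endpoint there.

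For the first endpoint I would use the local picture at $2.3\l_{1}$. As established in the preceding subsection, there the extra finite singularity is a cusp and $2.3\l_{1}$ lies on $({\cal S}_{2})$, $({\cal S}_{3})$ and $({\cal S}_{6})$ simultaneously: this is a cusp (Bogdanov--Takens) organizing centre, from which a Hopf curve and a homoclinic--loop curve emanate with a prescribed contact; the Hopf curve being $2s_{3}$, the homoclinic curve, which is $7S_{1}$, must emanate from $2.3\l_{1}$ as well. Without invoking the normal form one argues by continuity: $2s_{2}$ carries a weak saddle and no limit cycle, $2s_{3}$ carries a Hopf cycle of vanishing amplitude, and $2s_{2}$ and $2s_{3}$ meet only at $2.3\l_{1}$, so the amplitude of the limit cycle of $V_{11}$ tends to zero as the parameters approach $2.3\l_{1}$ inside $v_{5}$, forcing the $V_{5}/V_{11}$ divide to reach $\partial v_{5}$ exactly at $2.3\l_{1}$.

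For the second endpoint I would decide which side of each algebraic arc of $\partial v_{5}$ carries the limit cycle. The arc of $\partial v_{5}$ lying on $({\cal S}_{8})$ must border $V_{5}$: there the systems have a second invariant straight line besides $\{y=0\}$, hence no limit cycle by statement \eqref{item_v} of Sec.~\ref{sec:basicwf}, and since carrying a limit cycle is an open condition no neighbourhood of an interior point of that arc can lie in $V_{11}$. The arc $1s_{2}$ of $\partial v_{5}$, lying on $({\cal S}_{1})$, must border $V_{11}$: crossing $1s_{2}$ one passes from $v_{5}$ into $v_{14}$, which by Lemma~\ref{lemma_v14} always has a limit cycle, continued from the adjacent three--dimensional region, so the $v_{5}$--side of $1s_{2}$ is the one with the cycle (equivalently, the finite saddle carrying the loop of $7S_{1}$ collides with an infinite singular point on $({\cal S}_{1})$, so the loop cannot survive past $1s_{2}$). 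Since the $({\cal S}_{8})$--arc borders $V_{5}$ while the $({\cal S}_{1})$--arc $1s_{2}$ borders $V_{11}$, and these two arcs meet on $\partial v_{5}$ at $({\cal S}_{1})\cap({\cal S}_{8})$, which by Lemma~\ref{lem:lemma6A} is the point $1.8\l_{1}$, the divide $7S_{1}$ must reach $\partial v_{5}$ there. The exact shape of $7S_{1}$ between $2.3\l_{1}$ and $1.8\l_{1}$ and the matching of its connecting orbit with the separatrix configurations of the bordering arcs are then checked numerically with P4, as for the rest of the construction of $({\cal S}_{7})$.

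The main obstacle is not a single hard lemma but the global bookkeeping: establishing the cyclic order of the algebraic arcs around $\partial v_{5}$ and which side of each one carries the limit cycle, so that the two corners where the $V_{5}$--side and the $V_{11}$--side arcs meet are indeed $2.3\l_{1}$ and $1.8\l_{1}$. The only genuinely local analytic input is the cusp behaviour at $2.3\l_{1}$; everything else rests on statements \eqref{item_iii}--\eqref{item_v} of Sec.~\ref{sec:basicwf}, Lemma~\ref{lemma_v14}, and — because $({\cal S}_{7})$ is not known to be algebraic — on numerical continuation of the phase portraits.
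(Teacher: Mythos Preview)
Your proposal is correct and follows essentially the same approach as the paper: for the first endpoint you argue that the arc cannot terminate in the interior of $2s_{2}$ (weak saddle, no Hopf) nor of $2s_{3}$ (first--order weak focus, genuine Hopf), hence it lands at their common corner $2.3\ell_{1}$; for the second endpoint you use statement \eqref{item_v} of Sec.~\ref{sec:basicwf} to exclude the $({\cal S}_{8})$--arc from bordering $V_{11}$ and Lemma~\ref{lemma_v14} to exclude $1s_{2}$ from bordering $V_{5}$, forcing the endpoint to $1.8\ell_{1}$. The only addition relative to the paper is your Bogdanov--Takens interpretation of the cusp point $2.3\ell_{1}$ as an organizing centre from which the homoclinic curve emanates; the paper relies purely on the elimination argument without invoking the BT normal form.
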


Even though the next proof will be done in the concrete case when $h=1$, it can be easily extended for the generical case of surfaces and curves. We can visualize the image of this surface in the plane $h=1$ in Fig. \ref{sliceh1A_purple_labels}.

\begin{proof}
We consider $h=1$ and we write $r_{1}=(1,0,2)$ and $r_{2}=(1,1,0)$ for $2.3\l_{1}$ and $1.8\l_{1}$, respectively. If the starting point were any point of the segments $2s_{2}$ and $2s_{3}$, we would have the following incoherences: firstly, if the starting point of $7S_{1}$ were on $2s_{2}$, a portion of this subset must refer to a Hopf bifurcation since we have a limit cycle in $V_{11}$; and secondly, if this starting point were on $2s_{3}$, a portion of this subset must not refer to a Hopf bifurcation which contradicts the fact that on $2s_{3}$ we have a first--order weak focus. Finally, the ending point must be $r_{2}$ because, if it were located on $8s_{3}$, we would have a segment between this point and $1.8\l_{1}$ along surface (${\cal S}_{8}$) with two invariant straight lines and one limit cycle, which contradicts the statement \eqref{item_v} of Sec. \ref{sec:basicwf}, and if it were on $1s_{2}$, we would have a segment between this point and $1.8\l_{1}$ along surface (${\cal S}_{1}$) without limit cycle which is not compatible with Lemma \ref{lemma_v14} since $\mu=0$ does not produce a graphic.
\end{proof}

We show the sequence of phase portraits along these subsets in Fig. \ref{tripA}.

We cannot be totally sure that this is the unique additional bifurcation curve in this slice. There could exist others which are closed curves which are small enough to escape our numerical research, but the located one is enough to maintain the coherence of the bifurcation diagram. We recall that this kind of studies are always done modulo ``islands''. For all other two--dimensional parts of the partition of this slice whenever we join two points which are close to two different borders of the part, the two phase portraits are topologically equivalent. So we do not encounter more situations than the one mentioned above.

As we vary $h$ in $(0, \infty)$, the numerical research shows us the existence of the phenomenon just described, but for $h=0$, we have not found the same behavior.

In Figs. \ref{sliceh0A_labels} and \ref{sliceh1A_purple_labels} we show the complete bifurcation diagrams. In Sec. \ref{sec:mainthm} the reader can look for the topological equivalences among the phase portraits appearing in the various parts and the selected notation for their representatives in Figs. \ref{fig:phase_a1} and \ref{fig:phase_a2}.

\section{The bifurcation diagram of the systems in $Q\overline{sn}\overline{SN}(B)$} \label{sec:bifur_b}

We recall that, in view that the normal form \eqref{eqsnb} involves the coefficients $h$, $\l$ and $m$, which are real, the parameter space is $\R^3$ with coordinates $(h,\l,m)$.

Before we describe all the bifurcation surfaces for $Q\overline{sn}\overline{SN}(B)$, we prove the following result which gives conditions on the parameters for the presence of either a finite star node $n^*$ (whenever any two distinct non--trivial integral curves arrive at the node with distinct slopes), or a finite dicritical node $n^d$ (a node with identical eigenvalues but Jacobian non--diagonal).

\begin{lemma} \label{lemma_nodes}
Systems \eqref{eqsnb} always have a $n^*$, if $m=0$ and $h \neq 0$, or a $n^d$, otherwise.
\end{lemma}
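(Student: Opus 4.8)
The plan is to locate the finite singular point that is \emph{not} the saddle--node at the origin and not on the invariant line $\{x=0\}$, compute its Jacobian matrix, and read off its eigenstructure in terms of the parameters $h$, $\l$, $m$. First I would find the finite singularities of \eqref{eqsnb}: from $\dot x = x(x+2hy)=0$ we get either $x=0$ (the invariant line) or $x=-2hy$. Substituting $x=-2hy$ into $\dot y = y + \l x^2 + 2mxy + 2hy^2 = 0$ yields $y\bigl(1 + (4h^2\l - 4hm + 2h)y\bigr)=0$, so besides the origin there is exactly one further singular point $q$ with $y_0 = -1/(4h^2\l - 4hm + 2h)$ (when that coefficient is nonzero) and $x_0 = -2hy_0$. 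When $h=0$ this degenerates; the $h=0$ case must be handled on its own, and there $\dot x = x^2$, $\dot y = y + \l x^2 + 2mxy$, whose only finite singular points are on $x=0$, namely the origin, so the relevant ``node'' for the statement is again obtained by a separate bookkeeping — but the lemma's hypothesis splits on exactly $m=0,h\neq 0$ versus otherwise, so the bulk of the work is for $h\neq 0$.

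Next I would compute $DX$ at $q$. The Jacobian of \eqref{eqsnb} is
\[
DX(x,y)=\begin{pmatrix} 2x+2hy & 2hx \\ 2\l x + 2my & 1 + 2mx + 4hy \end{pmatrix}.
\]
Evaluating at $q=(-2hy_0,\,y_0)$, the $(1,1)$ entry becomes $-4hy_0+2hy_0=-2hy_0$, and using the defining relation $1+(4h^2\l-4hm+2h)y_0=0$ one can rewrite the $(2,2)$ entry so that a cancellation forces the two diagonal entries to coincide; similarly the off-diagonal entries simplify. The key computation is to show that at $q$ the matrix $DX(q)$ has a double eigenvalue, i.e. its discriminant $(\operatorname{tr})^2-4\det$ vanishes, and then to decide diagonalizability: if the off-diagonal entries both vanish we get a scalar (star) matrix $n^*$, and this should happen exactly when $m=0$ (with $h\neq 0$); otherwise the matrix is a non-diagonalizable Jordan block, giving a dicritical node $n^d$. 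I expect the algebra to collapse neatly because the normal form was engineered (via the choices $k=0$, $n=h/2$, $g=1$ in Proposition \ref{normalform_b}) precisely so that this extra singular point is forced to be a node with equal eigenvalues.

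The main obstacle, and the step I would be most careful about, is the bookkeeping of the cases where the coefficient $4h^2\l-4hm+2h = 2h(2h\l-2m+1)$ vanishes or where $y_0$ pushes the point to infinity — i.e. the interface with surface $(\mathcal{S}_1)$ — and the special case $h=0$. In those degenerate situations the ``third'' finite singularity escapes to infinity or merges, and one must check that the statement is still read correctly (the node in question may then be an infinite one, or the local structure persists by continuity). I would dispatch $h=0,m=0$ by direct inspection of $\dot x=x^2,\ \dot y=y+\l x^2$, noting the line $x=0$ carries the origin as the semi-elemental point and that there is no competing finite antisaddle, so the $n^*$ claim is about the relevant node in the compactified picture; and for $h\neq 0$ I would simply verify $\operatorname{discriminant}=0$ identically and that the Jacobian is scalar iff $m=0$. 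Modulo these case splits, the proof is a short explicit computation of $DX(q)$ and its eigenvalues.
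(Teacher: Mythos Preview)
Your approach contains a genuine error: you have chosen the wrong singular point. The finite singularity with the double eigenvalue is \emph{on} the invariant line $\{x=0\}$, not off it. From $\dot x = x(x+2hy)=0$ on $x=0$ one gets $\dot y = y(1+2hy)$, so besides the origin there is the point $p=(0,-1/(2h))$ (for $h\neq 0$). Evaluating your Jacobian there gives
\[
DX(p)=\begin{pmatrix} -1 & 0 \\ -m/h & -1 \end{pmatrix},
\]
which has the repeated eigenvalue $-1$ and is scalar exactly when $m=0$. That is the whole proof in the paper.

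By contrast, the point $q=(-2hy_0,y_0)$ you picked does \emph{not} have a double eigenvalue in general. A direct computation with your Jacobian shows $\det DX(q)=-\mu\,y_0^{2}$ (so $q$ is a saddle whenever $\mu>0$) and $(\operatorname{tr} DX(q))^{2}-4\det DX(q)=16h^{2}y_0^{2}\bigl((1+h\l)^{2}-2m\bigr)$, which is a nonzero multiple of $W_3$. Thus $q$ has equal eigenvalues only on the bifurcation surface $(\mathcal{S}_6)$, not identically; $q$ is precisely the antisaddle that toggles between node and focus, not the node the lemma is about. Once you redirect the computation to $p=(0,-1/(2h))$, the case analysis you worried about (the vanishing of $2h(2h\l-2m+1)$, i.e. surface $(\mathcal{S}_1)$) becomes irrelevant, and the $h=0$ issue is simply that $p$ escapes to infinity on that slice.
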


\begin{proof} We note that the singular point $(0,-1/2h)$ has its Jacobian matrix given by
    \begin{equation*}
        \left(
            \begin{array}{cc}
                -1   & 0  \\
                -m/h & -1 \\
            \end{array}
        \right).
    \end{equation*}
\end{proof}

\subsection{Bifurcation surfaces due to the changes in the nature of singularities}

For systems \eqref{eqsnb} we will always have $(0,0)$ as a finite singular point, a double saddle--node. Besides, the needed invariants here are the same as in the previous system except surfaces (${\cal S}_{6}$) and (${\cal S}_{8}$); so that we shall only give the geometrical meaning and their equations plus a deeper discussion on surface (${\cal S}_{6}$). For further information about them, see Sec. \ref{sec:bifur_a}.

\onecolumn
\begin{figure}
\centering
\psfrag{V1}{$V_{1}$}   \psfrag{2.3L1}{$2.3L_{1}$}  \psfrag{6S2}{$6S_{2}$}
\psfrag{V4}{$V_{4}$}   \psfrag{2S2}{$2S_{2}$}      \psfrag{V5}{$V_{5}$}
\psfrag{7S1}{$7S_{1}$} \psfrag{XX}{$V_{11}$}       \psfrag{2S3}{$2S_{3}$}
\psfrag{X10}{$V_{10}$} \psfrag{6S3}{$6S_{3}$}      \psfrag{8S2}{$8S_{2}$}
\psfrag{V7}{$V_{7}$}   \psfrag{8S3}{$8S_{3}$}      \psfrag{V8}{$V_{8}$}
\psfrag{1S2}{$1S_{2}$} \psfrag{X14}{$V_{14}$}      \psfrag{1.8L1}{$1.8L_{1}$}
\psfrag{2S1}{$2S_{1}$} \psfrag{2S4}{$2S_{4}$}      \psfrag{1.2L1}{$1.2L_{1}$}
\centerline{\psfig{figure=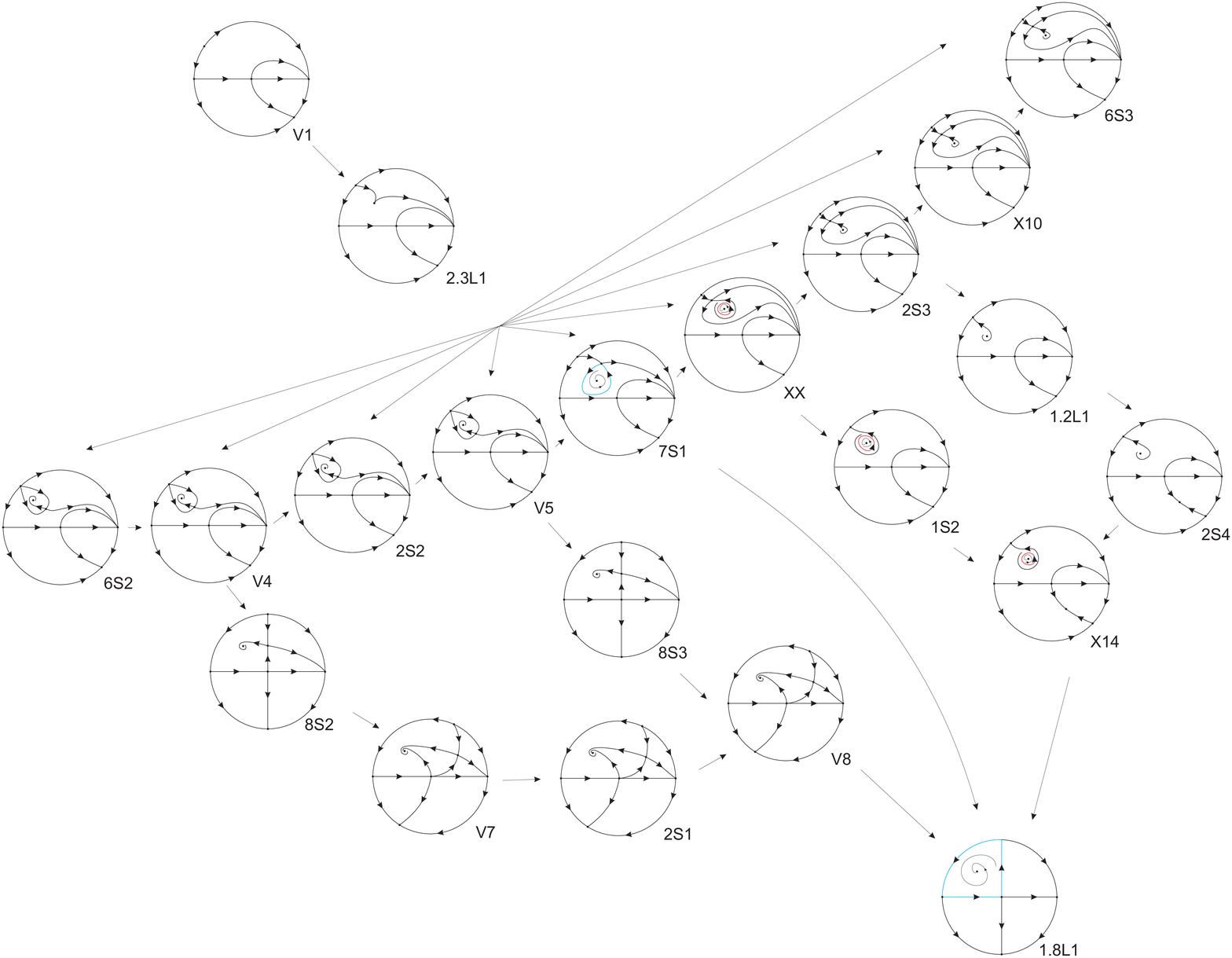,width=15cm}} \centerline {}
\caption{\small \label{tripA} Sequence of phase portraits in slice $h=1$ from $v_{1}$ to $1.8\l_{1}$. We start from $v_{1}$. When crossing $2.3\l_{1}$, we may choose at least seven ``destinations'': $6s_{2}$, $v_{4}$, $2s_{2}$, $v_{5}$, $2s_{3}$, $v_{10}$ and $6s_{3}$. In each one of these subsets, but $v_{5}$, we obtain only one phase portrait. In $v_{5}$ we find (at least) three different ones, which means that this subset must be split into (at least) three different regions whose phase portraits are $V_{5}$, $7S_{1}$ and $V_{11}$. And then we shall follow the arrows to reach the subset $1.8\l_{1}$ whose corresponding phase portrait is $1.8L_{1}$.}
\end{figure}
\twocolumn

\onecolumn
\begin{figure}
\centering
\psfrag{k}{$k$} \psfrag{n}{$n$}
\centerline{\psfig{figure=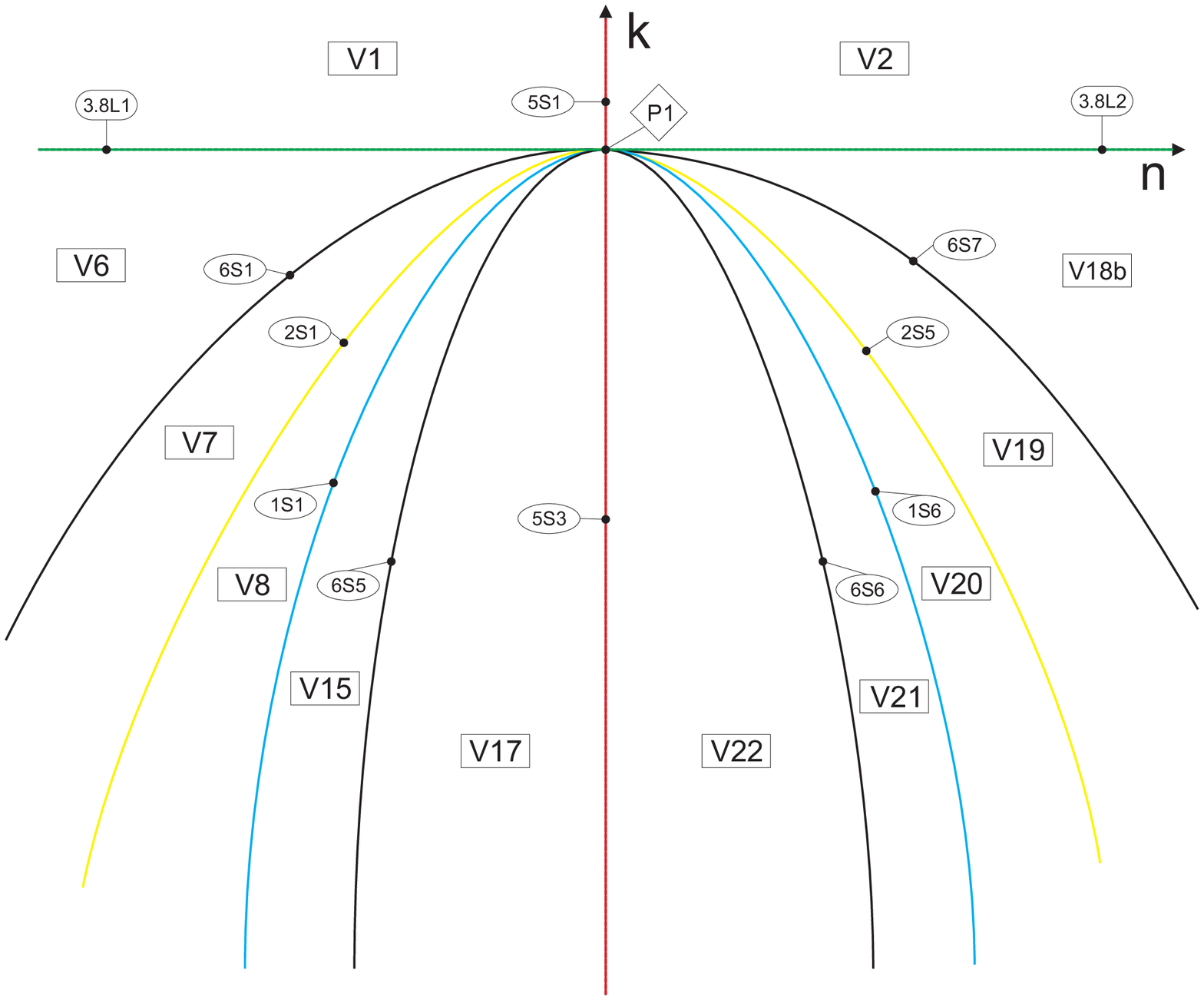,width=11cm}} \centerline {}
\caption{\small \label{sliceh0A_labels} Complete bifurcation diagram of $Q\overline{sn}\overline{SN}(A)$ for slice $h=0$.}
\end{figure}

\begin{figure}
\centering
\psfrag{k}{$k$} \psfrag{n}{$n$}
\centerline{\psfig{figure=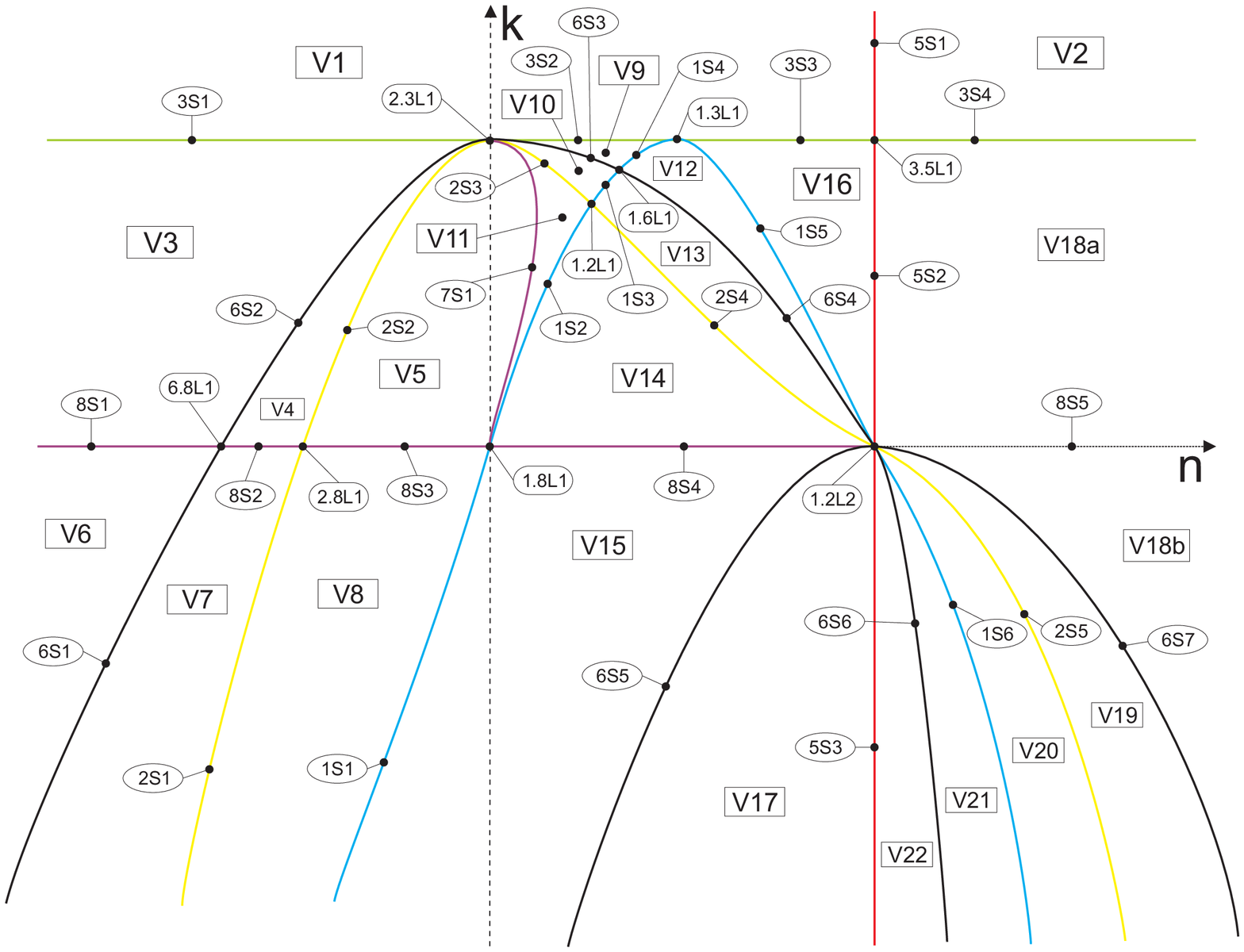,width=13cm}} \centerline {}
\caption{\small \label{sliceh1A_purple_labels} Complete bifurcation diagram of $Q\overline{sn}\overline{SN}(A)$ for slice $h=1$.}
\end{figure}
\twocolumn


\noindent \textbf{Bifurcation surfaces in $\R^3$ due to multiplicities of singularities}

\medskip

\noindent {\bf (${\cal S}_{1}$)} This is the bifurcation surface due to multiplicity of infinite singularities. This occurs when at least one finite singular point collides with at least one infinite point. The equation of this surface is $$\mu = 4h^{2}(1 + 2h\l - 2m) = 0.$$

\medskip

\noindent {\bf (${\cal S}_{3}$)} This is the bifurcation surface due to the collision of the other two finite singularities different from the saddle--node. The equation of this surface is given by $$\mathbb{T} = - h^2 = 0.$$ It only has substantial importance when we consider the plane $h=0$.

\medskip

\noindent {\bf (${\cal S}_{5}$)} This is the bifurcation surface due to the collision of infinite singularities, i.e. when all three infinite singular points collide. The equation of this surface is $$M = (-1 + 2m)^{2} = 0.$$

\medskip

\noindent \textbf{The surface of  $C^{\infty}$ bifurcation due to a strong saddle or a strong focus changing the sign of their traces (weak saddle or weak focus)}

\medskip

\noindent {\bf (${\cal S}_{2}$)} This is the bifurcation surface due to the weakness of finite singularities, which occurs when their trace is zero. The equation of this surface is given by $$\mathcal{T}_{4} = -16h^{3}\l = 0.$$

\medskip

\noindent \textbf{The surface of $C^{\infty}$ bifurcation due to a node becoming a focus}

\medskip

\noindent {\bf (${\cal S}_{6}$)} Since $W_{4}$ is identically zero for all the bifurcation space, the invariant that captures if a second point may be on the edge of changing from node to focus is $W_{3}$. The equation of this surface is given by $$W_{3} = 64h^{4}(1 + 2h{\l} + h^{2}{\l}^{2} - 2m) = 0.$$

\medskip

These are all the bifurcation surfaces of singularities of the systems \eqref{eqsnb} in the parameter space and they are all algebraic. We are not meant to discover any other bifurcation surface (neither non-algebraic nor algebraic one) due to the fact that in all the transitions we make among the parts of the bifurcation diagram of this family we find coherence in the phase portraits when ``traveling'' from one part to the other.

Analogously to the previous class, we shall foliate the 3--dimensional bifurcation diagram in $\R^3$ by planes $h=h_{0}$, $h_{0}$ constant. We shall give pictures of the resulting bifurcation diagram on these planar sections on an affine chart on $\R^2$. In order to detect the key values for this foliation, we must find the values of parameters where the surfaces have singularities and/or intersect to each other. We recall that we will be only interested in non-negative values of $h$ to construct the bifurcation diagram.

The following set of eleven results study the singularities of each surface and the simultaneous intersection points of the bifurcation surfaces, or the points or curves where two bifurcation surfaces are tangent.

We shall use the same set of colors for the bifurcation surfaces as in the previous case.

\begin{lemma}\label{lem:lemma1B} Concerning the singularities of the surfaces, it follows that:
    \begin{enumerate}[(i)]
        \item (${\cal S}_{1}$) has a straight line of singularities on $(0,\l,1/2), \ \l \in \R$;
        \item (${\cal S}_{2}$) has a straight line of singularities on $(0,0,m), \ m \in \R$;
        \item (${\cal S}_{3}$) and (${\cal S}_{5}$) have no singularities beyond their multiplicity as double planes;
        \item (${\cal S}_{6}$) has a straight line of singularities on $(0,\l,1/2), \ \l \in \R$.
    \end{enumerate}
\end{lemma}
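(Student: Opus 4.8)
The plan is to compute, for each of the four bifurcation surfaces involved, the gradient of its defining polynomial and determine where that gradient vanishes on the zero set of the polynomial; this is exactly the same method used in the proof of Lemma~\ref{lem:lemma1A}. The surfaces are given by $\mu = 4h^{2}(1 + 2h\l - 2m)$, $\mathcal{T}_{4} = -16h^{3}\l$, $M = (-1 + 2m)^{2}$ and $W_{3} = 64h^{4}(1 + 2h\l + h^{2}\l^{2} - 2m)$, all polynomials in $(h,\l,m)$. Since $({\cal S}_{3})$ and $({\cal S}_{5})$ are (up to constant) perfect squares of linear forms, namely $(-h^{2})$ and $(-1+2m)^{2}=-( \,\cdot\,)\cdot(\text{linear})$, their zero sets are double planes $\{h=0\}$ and $\{m=1/2\}$ respectively, so the "singular locus" there is the whole plane; stating that they "have no singularities beyond their multiplicity as double planes" just records this, and no further argument is needed apart from noting the factorization.

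First I would handle $({\cal S}_{1})$: compute $\nabla\mu = \bigl(8h(1+2h\l-2m) + 4h^{2}(2\l),\, 8h^{3},\, -8h^{2}\bigr)$ after collecting terms, i.e. $\nabla\mu = \bigl(8h + 24h^{2}\l - 16hm,\, 8h^{3},\, -8h^{2}\bigr)$. Setting the third component $-8h^{2}=0$ forces $h=0$, and then the first two components vanish automatically; so the singular locus of $({\cal S}_{1})$ inside $\R^3$ is exactly the plane $h=0$, and intersecting with the surface $\mu=0$ (which contains $\{h=0\}$) gives the whole plane $h=0$. However the statement claims the singular line is $(0,\l,1/2)$, so what is really meant is the set of points where the surface fails to be a smooth manifold after discarding the spurious factor $h^{2}$: write $\mu = 4h^{2}\,\nu$ with $\nu = 1+2h\l-2m$; the "geometric" part of the surface relevant to the partition is $\{h=0\}\cup\{\nu=0\}$, and the singular points of this union are the points lying on both components, i.e. $h=0$ and $1-2m=0$, which is precisely the line $(0,\l,1/2)$. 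I would phrase the proof in exactly these terms: factor out the nonreduced part, identify the reduced components, and intersect them. The same computation applies verbatim to $W_{3}=64h^{4}\,(1+2h\l+h^{2}\l^{2}-2m) = 64h^{4}\,(1 + h\l)^{2}\cdot\tfrac{?}{}$—actually $1+2h\l+h^{2}\l^{2}-2m = (1+h\l)^{2} - 2m$, which is not a perfect square, so the reduced components of $\{W_{3}=0\}$ are $\{h=0\}$ and $\{(1+h\l)^{2}=2m\}$; their intersection requires $h=0$ and $1=2m$, again the line $(0,\l,1/2)$, giving part~(iv).

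For $({\cal S}_{2})$: $\mathcal{T}_{4} = -16h^{3}\l$ factors as $-16\,h^{3}\,\l$, whose reduced components are the planes $\{h=0\}$ and $\{\l=0\}$; these meet along $h=\l=0$, i.e. the line $(0,0,m)$, $m\in\R$, which is part~(ii). So the whole lemma reduces to: (a) factor each defining polynomial into its reduced components (all of which turn out to be planes or, for $({\cal S}_{1})$ and $({\cal S}_{6})$, one plane and one quadric/paraboloid), and (b) intersect those components pairwise. I expect the only mild subtlety—and the one place to be careful in the write-up—to be making explicit that "singularities of the surface" here means singularities of the \emph{reduced} hypersurface associated to the T-comitant (discarding the identically-present powers of $h$, which reflect that these invariants are not irreducible as polynomials but the bifurcation surface they cut out is the zero set of the reduced factor), since otherwise the literal gradient of $\mu$ or $W_{3}$ vanishes on an entire plane rather than on a line. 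Everything else is a short direct calculation of partial derivatives exactly as in Lemma~\ref{lem:lemma1A}, and I would present it compactly, treating the four items in the order (iii), (i), (iv), (ii) since (iii) is immediate from the factorizations and (i) and (iv) share the same final line.
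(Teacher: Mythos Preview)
Your proposal is correct and is in fact cleaner than the paper's own argument on one point. The paper proves (i) by writing $\nabla\mu=(8h(1+3h\l-2m),8h^{3},-8h^{2})$ and asserting that $\nabla\mu=0$ yields the line $(0,\l,1/2)$; as you noticed, this literal gradient vanishes on the whole plane $h=0$, so the paper's sentence is only justified under exactly the reduced--hypersurface reading you spell out (pass to the components $\{h=0\}$ and $\{1+2h\l-2m=0\}$ and intersect). For (iv) the paper does precisely what you do---it writes $W_{3}$ as the product of a plane factor and a quartic, observes each factor is smooth, and takes their intersection---so your uniform factor--and--intersect treatment of (i), (ii), (iv) simply extends the paper's (iv) argument to the other items, making explicit the step the paper glosses over in (i). In short: same idea throughout, with your version closing a small gap.
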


\begin{proof} The gradient vector of surface (${\cal S}_{1}$) is given by $\nabla{\cal S}_{1}(h,\l,m) = (8h(1+3h\l-2m),8h^3,-8h^2)$, and solving the equation $\nabla{\cal S}_{1}(h,\l,m) = (0,0,0)$ we get that this surface has a straight line of singularities on $(0,\l,1/2), \ \l \in \R$, proving \emph{(i)}. The surface (${\cal S}_{2}$) trivially has a straight line of singularities on $(0,0,m), \ m \in \R$, which proves \emph{(ii)}. As the surfaces (${\cal S}_{3}$) and (${\cal S}_{5}$) are two double planes, they have no singularities; so \emph{(iii)} is proved. To prove \emph{(iv)}, we see that surface (${\cal S}_{6}$) is the product of a plane and a quartic, each one not having singularities itself. However, their intersection produces a straight line of singularities for this surface on $(0,\l,1/2), \ \l \in \R$.
\end{proof}

\begin{lemma}\label{lem:lemma2B} If $h=0$, the surfaces (${\cal S}_{1}$) and (${\cal S}_{2}$) coincide. For all $h \neq 0$, they intersect along the straight line $(h,0,1/2)$.
\end{lemma}

\begin{proof} By restricting both equations of surfaces (${\cal S}_{1}$) and (${\cal S}_{2}$) to $h=0$ they become both null, coinciding on the plane $h=0$. For all $h \neq 0$, by solving simultaneously both equations of these surfaces, we obtain the straight line $(h,0,1/2)$.
\end{proof}

\begin{lemma}\label{lem:lemma3B} The surfaces (${\cal S}_{1}$) and (${\cal S}_{3}$) coincide on the plane $h=0$ and have no intersection for all $h \neq 0$.
\end{lemma}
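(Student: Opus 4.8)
The statement to prove is Lemma~\ref{lem:lemma3B}: the surfaces $({\cal S}_{1})$ and $({\cal S}_{3})$ coincide on the plane $h=0$ and have no intersection for all $h\neq 0$.

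The plan is to argue directly from the explicit equations of the two surfaces, exactly as in the proof of Lemma~\ref{lem:lemma2B}. Recall that for systems \eqref{eqsnb} we have $({\cal S}_{1}): \mu = 4h^{2}(1+2h\l-2m)=0$ and $({\cal S}_{3}): \mathbb{T}=-h^{2}=0$. First I would restrict both equations to the plane $h=0$: each becomes identically zero, so every point of $\{h=0\}$ lies on both surfaces, which gives the coincidence claim. Second, for $h\neq 0$ I would observe that $\mathbb{T}=-h^{2}=0$ forces $h=0$, a contradiction; hence $({\cal S}_{3})$ contains no point with $h\neq 0$ at all, and \emph{a fortiori} the two surfaces have empty intersection off the plane $h=0$. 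This is really a two-line verification: the factor $h^{2}$ appears in $\mathbb{T}$ with no other factor, so $({\cal S}_{3})$ as a set is exactly the plane $\{h=0\}$ (with multiplicity two, as already noted in Lemma~\ref{lem:lemma1B}(iii)), and the statement follows.

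There is essentially no obstacle here; the only thing to be slightly careful about is the phrasing, since $({\cal S}_{3})$ is a double plane and one wants to make clear that "coincide on $h=0$" means the underlying point sets agree there, while "no intersection for $h\neq 0$" is automatic because $({\cal S}_{3})$ has no points with $h\neq 0$. I would state this explicitly so the reader is not confused into thinking $({\cal S}_{1})$ might still intersect a different component of $({\cal S}_{3})$ — there is no such component. I would write the proof in one short paragraph, parallel in style to the proof of Lemma~\ref{lem:lemma2B}, namely: "By restricting the equation of surface $({\cal S}_{3})$ to $h=0$ it becomes null, and the same holds for $({\cal S}_{1})$, so both surfaces coincide on the plane $h=0$. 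For all $h\neq 0$, the equation $\mathbb{T}=-h^{2}=0$ has no solution, so surface $({\cal S}_{3})$ reduces to the plane $h=0$ and the two surfaces do not intersect outside it."
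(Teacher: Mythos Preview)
Your proof is correct and follows essentially the same approach as the paper: restrict both equations to $h=0$ to see they vanish identically there, and note that for $h\neq 0$ the equation $\mathbb{T}=-h^{2}=0$ has no solution, so the surfaces cannot meet off the plane $h=0$.
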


\begin{proof} By restricting both equations of surfaces (${\cal S}_{1}$) and (${\cal S}_{3}$) to $h=0$ they become both null, coinciding on the plane $h=0$, and it is easy to verify that both equations have no simultaneous solutions for all $h \neq 0$.
\end{proof}

\begin{lemma}\label{lem:lemma4B} If $h=0$, the surfaces (${\cal S}_{1}$) and (${\cal S}_{5}$) intersect along the straight line $(0,\l,1/2)$, $\l \in \R$. For all $h \neq 0$, they intersect along the straight line $(h,0,1/2)$.
\end{lemma}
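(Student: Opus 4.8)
The plan is to proceed exactly as in the proofs of Lemmas \ref{lem:lemma2B}--\ref{lem:lemma3B}: restrict the defining equations of the two surfaces to the plane $h=0$, then treat the case $h\neq 0$ separately by solving the two polynomial equations simultaneously. Recall that surface (${\cal S}_{1}$) is given by $\mu = 4h^{2}(1 + 2h\l - 2m) = 0$ and surface (${\cal S}_{5}$) is given by $M = (-1 + 2m)^{2} = 0$.

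For the first assertion I would set $h=0$. Then the equation of (${\cal S}_{1}$) is identically satisfied, so on the plane $h=0$ the locus of (${\cal S}_{1}$) is the whole plane; intersecting with (${\cal S}_{5}$) amounts to solving $(-1+2m)^2=0$, i.e. $m=1/2$, with $\l$ free. This gives precisely the straight line $(0,\l,1/2)$, $\l\in\R$, as claimed.

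For the second assertion I would assume $h\neq 0$. Then (${\cal S}_{1}$) reduces to $1 + 2h\l - 2m = 0$ and (${\cal S}_{5}$) reduces to $m = 1/2$. Substituting $m=1/2$ into $1+2h\l-2m=0$ yields $2h\l = 0$, and since $h\neq 0$ this forces $\l = 0$. Hence the only simultaneous solutions are the points $(h,0,1/2)$, which is exactly the straight line stated. This completes the proof.

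I do not anticipate a genuine obstacle here: both defining polynomials are low degree and essentially linear (or constant) in the relevant variables once $h$ is fixed, so the elimination is immediate. The only point requiring a small amount of care is the bookkeeping of the degenerate case $h=0$, where (${\cal S}_{1}$) collapses to the whole plane rather than to a curve — but this is handled cleanly by treating $h=0$ and $h\neq 0$ as separate cases, exactly as in the preceding lemmas. As the paper notes, the argument also extends verbatim to the generic-slice description, since nothing in it depended on a particular numerical value of $h$ beyond its vanishing or non-vanishing.
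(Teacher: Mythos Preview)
Your proof is correct and follows essentially the same approach as the paper: restrict to $h=0$ and then to $h\neq 0$, solving the two polynomial equations simultaneously in each case. Your write-up is in fact more explicit than the paper's own proof, which merely states the outcome of the elimination (and contains a typo, writing $({\cal S}_{6})$ instead of $({\cal S}_{5})$).
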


\begin{proof} By restricting both equations of surfaces (${\cal S}_{1}$) and (${\cal S}_{6}$) to $h=0$ and solving the restricted equations simultaneously we obtain the straight line $(0,\l,1/2)$, $\l \in \R$, and by solving the system formed by the equations of these surfaces for all $h \neq 0$, we find the straight line $(h,0,1/2)$. Moreover, we see that both straight lines intersect at the point $(0,0,1/2)$.
\end{proof}

\begin{lemma}\label{lem:lemma5B} If $h=0$, the surfaces (${\cal S}_{1}$) and (${\cal S}_{6}$) coincide and, if $h \neq 0$, they have a $2$--order contact along the straight line $(h,0,1/2)$.
\end{lemma}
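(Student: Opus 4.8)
The plan is to treat the two assertions of the lemma as two short computations, exactly in the spirit of Lemmas \ref{lem:lemma2B}--\ref{lem:lemma4B}, and then to add a little book-keeping to pin down the phrase ``$2$-order contact''.

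First I would dispose of the case $h=0$. Both defining polynomials carry a power of $h$ as a factor: $\mu = 4h^{2}(1+2h\l-2m)$ and $W_{3} = 64h^{4}(1+2h\l+h^{2}\l^{2}-2m)$. Consequently each of $\mu$ and $W_{3}$ restricts to the zero polynomial on the plane $\{h=0\}$, so both $({\cal S}_{1})=\{\mu=0\}$ and $({\cal S}_{6})=\{W_{3}=0\}$ contain that plane; hence, intersected with the slice $\{h=0\}$, each of them is the whole plane, and in this sense the two surfaces coincide on $\{h=0\}$, which is the first claim.

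Next, for $h\neq 0$ I would cancel the nonvanishing factors $4h^{2}$ and $64h^{4}$, reducing the two surfaces to $1+2h\l-2m=0$ and $1+2h\l+h^{2}\l^{2}-2m=0$ on $\{h\neq0\}$. Subtracting these two equations on their common zero set leaves $h^{2}\l^{2}=0$, hence $\l=0$ and then $m=1/2$; conversely every point $(h,0,1/2)$ with $h\neq0$ satisfies both equations. Thus the set-theoretic intersection over $\{h\neq0\}$ is precisely the straight line $(h,0,1/2)$.

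Finally, to justify that the contact along this line is of order two I would do two things. For tangency, I would compute the gradients $\nabla\mu$ and $\nabla W_{3}$ and check that at a generic point $(h,0,1/2)$ they are proportional --- in fact $\nabla W_{3}=16h^{2}\,\nabla\mu$ there --- so the two surfaces share a tangent plane all along the line. To see the order is exactly two, I would fix $h=h_{0}\neq0$ and substitute the equation $m=(1+2h_{0}\l)/2$ of the slice of $({\cal S}_{1})$ into the equation of the slice of $({\cal S}_{6})$; this yields $h_{0}^{2}\l^{2}=0$, a zero of multiplicity exactly two at $\l=0$, the leading coefficient $h_{0}^{2}$ being nonzero (equivalently, in the plane $\{h=h_{0}\}$ the line $m=\tfrac12+h_{0}\l$ meets the parabola $m=\tfrac12+h_{0}\l+\tfrac{h_{0}^{2}}{2}\l^{2}$ with intersection multiplicity $2$). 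Letting $h_{0}$ vary over $\R\setminus\{0\}$, the contact locus is the line $(h,0,1/2)$. I do not expect any genuine obstacle here; the only point requiring a little care is fixing the precise meaning of ``$2$-order contact'' and verifying that the quadratic coefficient above never vanishes, which the substitution makes explicit.
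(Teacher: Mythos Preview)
Your proposal is correct and follows essentially the same route as the paper's proof: the $h=0$ case by observing both polynomials vanish identically, the intersection for $h\neq0$ by subtracting the reduced equations, and the tangency by checking that the gradients at $(h,0,1/2)$ are both multiples of $(0,h,-1)$. The only cosmetic difference is in certifying that the contact order is exactly two: the paper says one ``checks the next derivative'' and finds they do not coincide, whereas you substitute the slice of $({\cal S}_{1})$ into that of $({\cal S}_{6})$ and read off the double zero $h_{0}^{2}\l^{2}=0$; these are equivalent verifications of the same fact, and your version is arguably the more explicit of the two.
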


\begin{proof} By applying the same technique as in Lemma \ref{lem:lemma2B}, we have the result. To prove the $2$--order contact of the surfaces along the straight line $(h,0,1/2)$, $h \neq 0$, we note that $({\cal S}_{1})\cap({\cal S}_{6}) = \{(0,\l,m), \ \l,m \in \R\}\cup\{(h,0,1/2), \ \l \in \R\}$. Then, the gradient vector of both surfaces along the straight line $(h,0,1/2)$ is a multiple of the vector $(0,h,-1)$, $h \neq 0$. If we check the next derivative, they do not coincide, which proves the $2$--order contact.
\end{proof}

\begin{lemma}\label{lem:lemma6B} The surfaces (${\cal S}_{2}$) and (${\cal S}_{3}$) coincide on the plane $h=0$ and have no intersection for all $h \neq 0$.
\end{lemma}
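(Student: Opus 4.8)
The plan is to read off the two defining equations from the list of bifurcation surfaces in Section \ref{sec:bifur_b} and compare them directly, since both are explicit polynomials in the parameters $(h,\l,m)$. Recall that $({\cal S}_{3})$ is the zero set of $\mathbb{T} = -h^{2}$, while $({\cal S}_{2})$ is the zero set of $\mathcal{T}_{4} = -16h^{3}\l$.

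First I would analyze $({\cal S}_{3})$ alone. The equation $-h^{2} = 0$ holds if and only if $h = 0$, so the surface $({\cal S}_{3})$ is precisely the plane $\{h = 0\}$ (appearing with multiplicity two, as already recorded in the statement of the surface). In particular $({\cal S}_{3})$ contains no point with $h \neq 0$. This immediately yields the second assertion: for every $h \neq 0$ the slice of $({\cal S}_{3})$ by the plane $h = h_{0}$ is empty, hence $({\cal S}_{2})$ and $({\cal S}_{3})$ cannot meet there.

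Next I would restrict both equations to the plane $h = 0$. Since $\mathbb{T} = -h^{2}$ and $\mathcal{T}_{4} = -16h^{3}\l$ are both divisible by $h$, they vanish identically on $\{h = 0\}$. Thus on this plane each of $({\cal S}_{2})$ and $({\cal S}_{3})$ degenerates to the whole plane $\{h=0\}$, so the two surfaces coincide there, which is the first assertion.

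I do not expect any real obstacle: the lemma is a one-line consequence of the explicit factorizations $\mathbb{T} = -h^{2}$ and $\mathcal{T}_{4} = -16h^{3}\l$, both of which carry the factor $h$. The only subtlety worth spelling out is the reading of the word "coincide": it means that both equations restrict to all of the plane $h = 0$, and that away from this plane $({\cal S}_{3})$ is not merely disjoint from $({\cal S}_{2})$ but actually empty.
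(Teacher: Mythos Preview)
Your argument is correct and is essentially the same as the paper's: the paper simply declares the proof ``Analogous to Lemma \ref{lem:lemma3B}'', which amounts to restricting both defining polynomials to $h=0$ (where they vanish identically) and observing that for $h\neq 0$ the equation $-h^{2}=0$ has no solution, so $({\cal S}_{3})$ is empty and no intersection can occur. Your write-up just makes this explicit.
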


\begin{proof} Analogous to Lemma \ref{lem:lemma3B}.
\end{proof}

\begin{lemma}\label{lem:lemma7B} The surfaces (${\cal S}_{2}$) and (${\cal S}_{5}$) intersect along the straight lines $(0,\l,1/2)$, $\l \in \R$ and $(h,0,1/2)$, $h \neq 0$.
\end{lemma}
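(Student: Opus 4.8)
The plan is to proceed exactly as in the earlier lemmas of this section: write down the defining equations of the two surfaces and solve them simultaneously, first on the slice $h=0$ and then for $h\neq 0$. Recall from Sec.\ \ref{sec:bifur_b} that the surface (${\cal S}_{2}$) has equation $\mathcal{T}_{4} = -16h^{3}\l = 0$ and the surface (${\cal S}_{5}$) has equation $M = (-1+2m)^{2} = 0$. The key observation is that $M=0$ is equivalent to $m=1/2$, independently of $h$ and $\l$, so one coordinate is pinned down immediately; the remaining condition $-16h^{3}\l=0$ is what forces the splitting into two families.

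First I would restrict to $h=0$. Then $\mathcal{T}_{4}\equiv 0$ automatically, so the only surviving constraint is $m=1/2$, while $\l$ is free; this yields the straight line $(0,\l,1/2)$, $\l\in\R$, as the intersection on this slice. Next, for $h\neq 0$, the equation $-16h^{3}\l=0$ forces $\l=0$ (since $h^{3}\neq 0$), and combined with $m=1/2$ this gives the straight line $(h,0,1/2)$, $h\neq 0$. Putting the two cases together gives exactly the two lines claimed in the statement. One may also remark, as in Lemma \ref{lem:lemma4B}, that these two lines meet at the point $(0,0,1/2)$, though that is not needed for the statement.

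There is essentially no obstacle here: the computation is a direct substitution and a factorization of a monomial equation, entirely parallel to Lemmas \ref{lem:lemma2B}--\ref{lem:lemma6B}. The only point requiring a moment's care is the case distinction on $h$: because $\mathcal{T}_{4}$ carries the factor $h^{3}$, the equation of (${\cal S}_{2}$) degenerates on $h=0$ and one must treat that slice separately rather than blindly solving the two polynomial equations over all of $\R^{3}$. This is the same phenomenon already handled in Lemmas \ref{lem:lemma2B}, \ref{lem:lemma3B} and \ref{lem:lemma6B}, so the proof can be stated very briefly by analogy.
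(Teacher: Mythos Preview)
Your proposal is correct and follows essentially the same approach as the paper, which simply says ``Analogous to Lemma \ref{lem:lemma4B}'': restrict to $h=0$ to get the line $(0,\l,1/2)$, then solve for $h\neq 0$ to get $(h,0,1/2)$. Your explicit write-up, including the remark that the two lines meet at $(0,0,1/2)$, matches the pattern of the proof of Lemma \ref{lem:lemma4B} exactly.
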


\begin{proof} Analogous to Lemma \ref{lem:lemma4B}.
\end{proof}

\begin{lemma}\label{lem:lemma8B} The surfaces (${\cal S}_{2}$) and (${\cal S}_{6}$) coincide on the plane $h=0$ and intersect along the straight line $(h,0,1/2)$, $h \neq 0$.
\end{lemma}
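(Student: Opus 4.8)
Lemma~\ref{lem:lemma8B} asserts two things about the surfaces $({\cal S}_2) = \{\mathcal{T}_4 = -16h^3\l = 0\}$ and $({\cal S}_6) = \{W_3 = 64h^4(1+2h\l+h^2\l^2-2m) = 0\}$: first, that they coincide on the plane $h=0$, and second, that for $h\neq 0$ their intersection is exactly the straight line $(h,0,1/2)$. The plan is to argue each assertion by direct substitution and elimination, exactly in the spirit of the analogous Lemmas~\ref{lem:lemma6B} and~\ref{lem:lemma7B}.

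\begin{proof}
By restricting both equations of surfaces (${\cal S}_{2}$) and (${\cal S}_{6}$) to $h=0$, we see that $\mathcal{T}_4$ and $W_3$ both vanish identically, since every monomial of $\mathcal{T}_4$ and of $W_3$ carries a positive power of $h$ as a factor; hence the two surfaces coincide on the plane $h=0$. For $h\neq 0$, we may cancel the nonzero factors $-16h^3$ and $64h^4$, so that the equations of $({\cal S}_2)$ and $({\cal S}_6)$ reduce respectively to $\l=0$ and $1+2h\l+h^2\l^2-2m=0$. Substituting $\l=0$ into the second equation yields $1-2m=0$, that is, $m=1/2$. Therefore, for each fixed $h\neq 0$, the simultaneous solution set is exactly the point $(\l,m)=(0,1/2)$, and letting $h$ vary over the nonzero reals we obtain the straight line $(h,0,1/2)$, $h\neq 0$, as claimed.
\end{proof}

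The only mild subtlety worth flagging is that one must not conflate the two claims: on $h=0$ the surfaces are genuinely equal as subsets of $\R^3$ (both being all of the plane $h=0$), whereas for $h\neq 0$ one is intersecting two honest surfaces and the intersection drops to a curve. There is no real obstacle here; the computation is elementary, the structure parallels the surrounding lemmas, and the result is recorded only to be used later when assembling the slices of the bifurcation diagram of $Q\overline{sn}\overline{SN}(B)$.
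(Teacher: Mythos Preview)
Your proof is correct and follows essentially the same approach as the paper's own proof: restrict both defining polynomials to $h=0$ to see they vanish identically, and for $h\neq 0$ cancel the nonzero powers of $h$ and solve the resulting system $\l=0$, $1+2h\l+h^2\l^2-2m=0$ to get $(h,0,1/2)$. Your version is slightly more explicit in carrying out the substitution, but the method and structure match the paper exactly.
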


\begin{proof} By restricting both equations of surfaces (${\cal S}_{2}$) and (${\cal S}_{6}$) to $h=0$ they become both null, coinciding on the plane $h=0$, and by solving the system formed by the equations of these surfaces for all $h \neq 0$, we find the straight line $(h,0,1/2)$.
\end{proof}

\begin{lemma}\label{lem:lemma9B} The surfaces (${\cal S}_{3}$) and (${\cal S}_{5}$) intersect along the straight line $(0,\l,1/2)$, $\l \in \R$.
\end{lemma}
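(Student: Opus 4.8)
The plan is to proceed exactly as in the analogous intersection lemmas for this family (e.g., Lemma~\ref{lem:lemma9A} and its ``B'' counterparts), namely by solving the two defining equations simultaneously. Recall from the descriptions of the bifurcation surfaces of systems \eqref{eqsnb} that $(\mathcal{S}_{3})$ is cut out by $\mathbb{T} = -h^2 = 0$ and $(\mathcal{S}_{5})$ is cut out by $M = (-1+2m)^2 = 0$. Thus $(\mathcal{S}_{3})$ is the plane $h=0$ (a double plane) and $(\mathcal{S}_{5})$ is the plane $m=1/2$ (also a double plane).

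The first step is to impose both conditions at once. From $-h^2 = 0$ we get $h = 0$, and from $(-1+2m)^2 = 0$ we get $m = 1/2$; the coordinate $\l$ is left completely free. Hence the simultaneous solution set is precisely $\{(0,\l,1/2) : \l \in \R\}$, which is the claimed straight line. In particular, consistently with the earlier remark that $(\mathcal{S}_{3})$ ``only has substantial importance when we consider the plane $h=0$'', the whole of this intersection lies inside the plane $h=0$.

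Since both surfaces are affine hyperplanes (with multiplicity) whose equations are linear once the trivial square factors are stripped off, there is really no obstacle here: the computation is immediate and there is no case distinction to make, nor any tangency order to verify beyond what is already recorded for these double planes in Lemma~\ref{lem:lemma1B}(iii). If anything, the only point worth stating explicitly is that the two linear conditions $h=0$ and $2m-1=0$ are independent, so the intersection is one-dimensional, i.e., genuinely a line rather than a point or a plane.
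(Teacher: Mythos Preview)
Your proof is correct and follows exactly the same approach as the paper, which simply states that solving the system formed by the equations of the two surfaces yields the straight line $(0,\l,1/2)$, $\l\in\R$. Your version is just more explicit about the (trivial) algebra.
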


\begin{proof} By solving the system formed by the equations of these surfaces we find the straight line $(0,\l,1/2)$, $\l \in \R$.
\end{proof}

\begin{lemma}\label{lem:lemma10B} The surfaces (${\cal S}_{3}$) and (${\cal S}_{6}$) coincide on the plane $h=0$ and have no intersection for all $h \neq 0$.
\end{lemma}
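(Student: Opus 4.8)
The plan is to argue directly from the defining equations of the two surfaces, exactly as in the proofs of Lemmas \ref{lem:lemma3B} and \ref{lem:lemma6B}. Recall that $(\mathcal{S}_{3})$ is cut out by $\mathbb{T} = -h^{2} = 0$ and $(\mathcal{S}_{6})$ by $W_{3} = 64h^{4}(1 + 2h\l + h^{2}\l^{2} - 2m) = 0$. First I would restrict both polynomials to the plane $h = 0$: since $\mathbb{T} = -h^{2}$ and $W_{3}$ carries the factor $h^{4}$, both vanish identically on $\{h=0\}$, so each of $(\mathcal{S}_{3})$ and $(\mathcal{S}_{6})$ contains the plane $h=0$ and in fact, set-theoretically, reduces to it there; hence the two surfaces coincide on $h=0$.

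For the second assertion, I would observe that for $h \neq 0$ the equation $\mathbb{T} = -h^{2} = 0$ has no solutions at all, so the surface $(\mathcal{S}_{3})$ has no points with $h \neq 0$; a fortiori it cannot meet $(\mathcal{S}_{6})$ (or any other surface) outside the plane $h=0$. This completes the argument. There is no real obstacle here: the statement is a one-line consequence of the factorizations $\mathbb{T} = -h^{2}$ and $W_{3} = 64h^{4}(\cdots)$, and the only thing worth flagging is that $(\mathcal{S}_{3})$ is a ``double plane'' (as already noted in Lemma \ref{lem:lemma1B}(iii)), which is why its locus is confined to $h=0$ and why the coincidence with $(\mathcal{S}_{6})$ on that plane is automatic rather than a genuine tangency computation.
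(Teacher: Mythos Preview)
Your argument is correct and follows exactly the approach the paper indicates: the paper's proof simply says ``Analogous to Lemma \ref{lem:lemma3B},'' and your restriction of $\mathbb{T}=-h^{2}$ and $W_{3}=64h^{4}(1+2h\l+h^{2}\l^{2}-2m)$ to $h=0$ together with the observation that $\mathbb{T}=0$ has no solutions for $h\neq 0$ is precisely that analogous computation.
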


\begin{proof} Analogous to Lemma \ref{lem:lemma3B}.
\end{proof}

\begin{lemma}\label{lem:lemma11B} The surfaces (${\cal S}_{5}$) and (${\cal S}_{6}$) intersect along the straight lines $(0,\l,1/2)$, $\l \in \R$, and $(h,0,1/2)$, $h \in \R$, and the curve $(-2/{\l},\l,1/2)$, $\l \neq 0$.
\end{lemma}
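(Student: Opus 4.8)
The plan is to prove the lemma by straightforward elimination between the two defining equations, using the factored forms of the invariants already displayed in the statements of surfaces $({\cal S}_{5})$ and $({\cal S}_{6})$. First I would use the equation of $({\cal S}_{5})$, namely $M = (-1+2m)^{2} = 0$, which forces $m = 1/2$ (the square reflecting that $({\cal S}_{5})$ is a double plane, cf. Lemma~\ref{lem:lemma1B}(iii)). Hence the whole intersection lies in the plane $\{m = 1/2\}$, and it remains only to intersect this plane with $({\cal S}_{6}) = \{W_{3} = 0\}$.

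Next I would substitute $m = 1/2$ into $W_{3} = 64h^{4}(1 + 2h\l + h^{2}\l^{2} - 2m)$. The leading factor $64h^{4}$ vanishes exactly on $\{h=0\}$, which together with $m = 1/2$ gives the straight line $(0,\l,1/2)$, $\l\in\R$. For the remaining factor, setting $m = 1/2$ turns $1 + 2h\l + h^{2}\l^{2} - 2m$ into $2h\l + h^{2}\l^{2} = h\l(2 + h\l)$, so this factor vanishes precisely when $h = 0$ (already recorded), or $\l = 0$, or $h\l = -2$. Reading off the cases: $\l = 0$ together with $m = 1/2$ yields the straight line $(h,0,1/2)$, $h\in\R$; and $h\l = -2$ with $\l\neq 0$ yields $h = -2/\l$, i.e. the curve $(-2/\l,\l,1/2)$, $\l\neq 0$. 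Conversely, each of these three families visibly satisfies both $M = 0$ and $W_{3} = 0$, so the intersection is exactly their union.

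There is essentially no hard step here; this is a routine polynomial elimination. The only points requiring a little attention are bookkeeping ones: noting that the factor $64h^{4}$ contributes no component other than $\{h=0\}$ (which is one of the three listed families), and observing that the parametrization $(-2/\l,\l,1/2)$ must exclude $\l = 0$, since otherwise it would degenerate back onto the already-listed line $\{h = 0\}$. With these remarks the proof is complete.
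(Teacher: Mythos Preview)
Your proposal is correct and takes essentially the same approach as the paper, which simply states ``By solving the system formed by the equations of these surfaces we find \ldots''; you have supplied the explicit factorisation $W_{3}\big|_{m=1/2} = 64h^{4}\cdot h\l(2+h\l)$ that the paper leaves implicit. One small wording quibble: the exclusion $\l\neq 0$ in the third family is required because $h=-2/\l$ is undefined at $\l=0$, not because the curve ``degenerates onto $\{h=0\}$''---as $\l\to 0$ along $h\l=-2$ one has $h\to\infty$, so this branch simply does not meet $\l=0$.
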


\begin{proof} By solving the system formed by the equations of these surfaces we find the straight lines $(0,\l,1/2)$, $\l \in \R$, and $(h,0,1/2)$, $h \in \R$, and the curve $(-2/{\l},\l,1/2)$, $\l \neq 0$.
\end{proof}

Now, we shall study the bifurcation diagram having as reference the values of $h$ where significant phenomena occur in the behavior of the bifurcation surfaces. As there is not any other critical value of $h$, except $h=0$, this is the only value where the behavior of the bifurcation surfaces changes critically. Recalling we are considering only non-negative values of $h$, we shall choose a positive value to be a generic case.

We take then the values:
    \begin{equation}
        \aligned
            h_{0} & = 0,\\
            h_{1} & = 1.\\
        \endaligned
    \end{equation}

The value $h_{0}$ correspond to an explicit value of $h$ for which there is a bifurcation in the behavior of the systems on the slices. The value $h_{1}$ is just an intermediate point we call by a generic value of $h$ (see Figs. \ref{sliceh0B} and \ref{sliceh1B}). We recall that the bifurcation $\mathbb{T}=0$ (two other finite points collide), plotted usually in green, does not appear here because it corresponds completely to the plane $h=0$.

\begin{figure}
\centering
\psfrag{l}{$\l$} \psfrag{m}{$m$}
\centerline{\psfig{figure=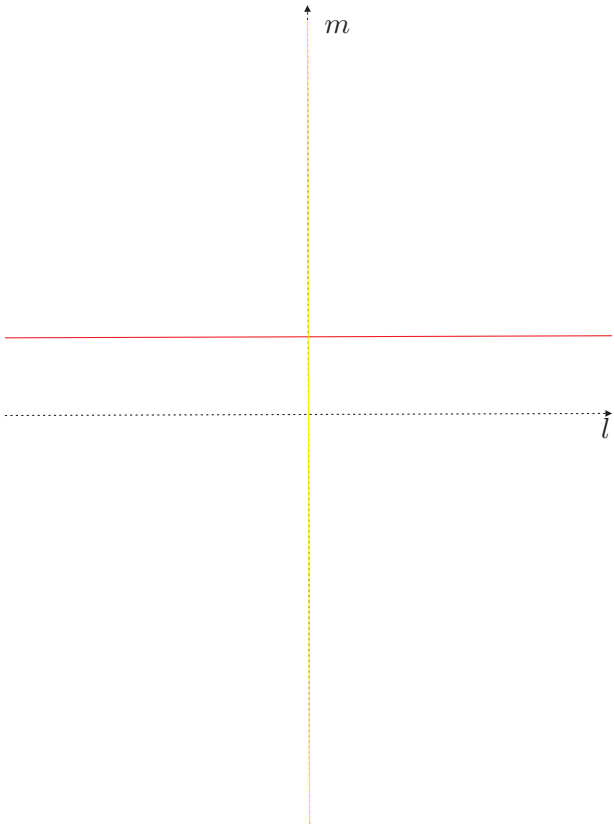,width=8cm}} \centerline {}
\caption{\small \label{sliceh0B} Slice of parameter space for \eqref{eqsnb} when $h=0$.}
\end{figure}

\begin{figure}
\centering
\psfrag{l}{$\l$} \psfrag{m}{$m$}
\centerline{\psfig{figure=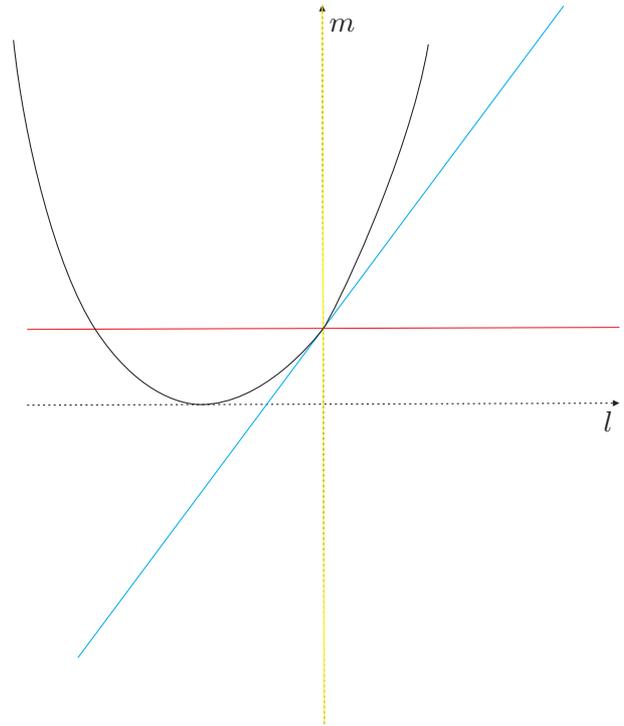,width=8cm}} \centerline {}
\caption{\small \label{sliceh1B} Slice of parameter space for \eqref{eqsnb} when $h=1$.}
\end{figure}

As in the previous family, all the bifurcation surfaces intersect on $h=0$. In fact, the equations of surfaces (${\cal S}_{1}$), (${\cal S}_{2}$), (${\cal S}_{3}$) and (${\cal S}_{6}$) are identically zero when restricted to the plane $h=0$, and the equation of (${\cal S}_{5}$) is the straight line $-1+2m=0$, for all $h \geq 0$ and $m,\l \in \R$.

Here we also give topologically equivalent figures to the exact drawings of the bifurcation curves. The reader may find the exact pictures in the web page {http://mat.uab.es/$\sim$artes/articles/qvfsn2SN02/}\linebreak{qvfsn2SN02.html.}

If we consider the value $h=1$, other changes in the bifurcation diagram happen. On this plane, surface (${\cal S}_{1}$) is the straight line $1+2\l-2m=0$, which intersects surface (${\cal S}_{5}$) at the point $(1,0,1/2)$; surface (${\cal S}_{6}$) is the parabola $1+2\l+\l^{2}-2m=0$ passing through the point $(1,0,1/2)$ with a $2$--order contact with surface (${\cal S}_{1}$), see Lemma \ref{lem:lemma5B}; moreover, Lemma \ref{lem:lemma11B} assures that surface (${\cal S}_{6}$) has another intersection point with surface (${\cal S}_{5}$) at $(1,-2,1/2)$; surface (${\cal S}_{2}$) is the straight line $\l=0$, which intersects surfaces (${\cal S}_{1}$), (${\cal S}_{5}$) and (${\cal S}_{6}$) at the point $(1,0,1/2)$, see Lemmas \ref{lem:lemma7B} and \ref{lem:lemma8B}; and finally, surface (${\cal S}_{3}$) is a negative constant.

We recall that the black surface (${\cal S}_{6}$) (or $W_{3}$) means the turning of a finite antisaddle from a node to a focus. Then, according to the general results about quadratic systems, we could have limit cycles around such focus for any set of parameters having $W_{3} < 0$.

In Figs. \ref{sliceh0B_labels} and \ref{sliceh1B_labels} we show the complete bifurcation diagrams. In Sec. \ref{sec:mainthm} the reader can look for the topological equivalences among the phase portraits appearing in the various parts and the selected notation for their representatives in Fig. \ref{fig:phase_b}.

\begin{figure}
\centering
\psfrag{l}{$\l$} \psfrag{m}{$m$}
\centerline{\psfig{figure=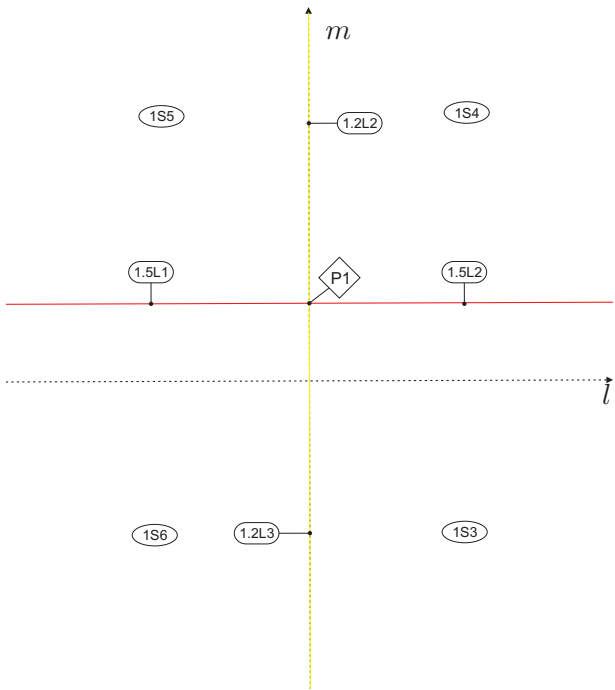,width=8cm}} \centerline {}
\caption{\small \label{sliceh0B_labels} Complete bifurcation diagram of $Q\overline{sn}\overline{SN}(B)$ for slice $h=0$.}
\end{figure}

\begin{figure}
\centering
\psfrag{l}{$\l$} \psfrag{m}{$m$}
\centerline{\psfig{figure=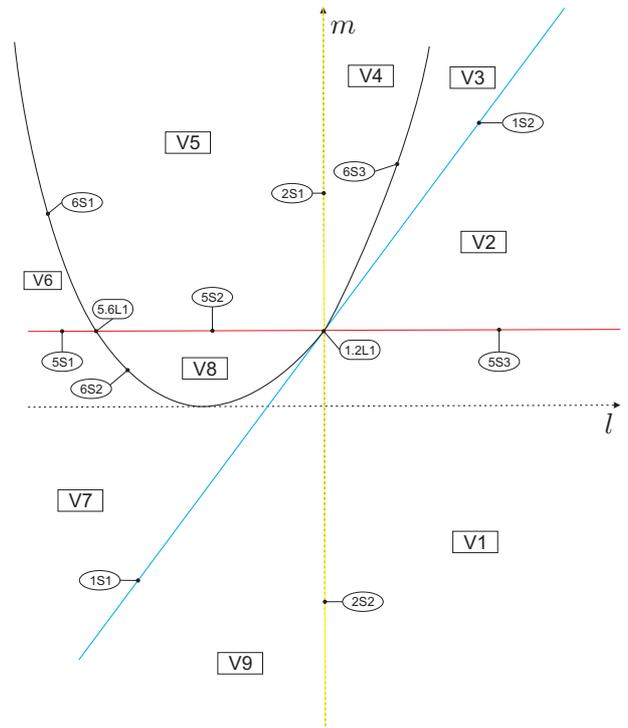,width=8cm}} \centerline {}
\caption{\small \label{sliceh1B_labels} Complete bifurcation diagram of $Q\overline{sn}\overline{SN}(B)$ for slice $h=1$.}
\end{figure}

\section{Other relevant facts about the bifurcation diagrams} \label{sec:islands}

The bifurcation diagrams we have obtained for the families $Q\overline{sn}\overline{SN}(A)$ and $Q\overline{sn}\overline{SN}(B)$ are completely coherent, i.e., in each family, by taking any two points in the parameter space and joining them by a continuous curve, along this curve the changes in phase portraits that occur when crossing the different bifurcation surfaces we mention can be completely explained.

Nevertheless, we cannot be sure that these bifurcation diagrams are the complete bifurcation diagrams for $Q\overline{sn}\overline{SN}(A)$ and $Q\overline{sn}\overline{SN}(B)$ due to the possibility of ``islands'' inside the parts bordered by unmentioned bifurcation surfaces. In case they exist, these ``islands'' would not mean any modification of the nature of the singular points. So, on the border of these ``islands'' we could only have bifurcations due to saddle connections or multiple limit cycles.

In case there were more bifurcation surfaces, we should still be able to join two representatives of any two parts of the 66 parts of $Q\overline{sn}\overline{SN}(A)$ or the 30 parts of $Q\overline{sn}\overline{SN}(B)$ found until now with a continuous curve either without crossing such bifurcation surface or, in case the curve crosses it, it must do it an even number of times without tangencies, otherwise one must take into account the multiplicity of the tangency, so the total number must be even. This is why we call these potential bifurcation surfaces ``\textit{islands}''.

However, in none of the two families we have found a different phase portrait which could fit in such an island. The existence of the invariant straight line avoids the existence of a double limit cycle which is the natural candidate for an island (recall the item \eqref{item_iv} of Sec. \ref{sec:basicwf}), and also the limited number of separatrices (compared to a generic case) limits greatly the possibilities for phase portraits.

\section{Completion of the proofs of Theorems \ref{th:1.1} and \ref{th:1.2}} \label{sec:mainthm}

\indent In the bifurcation diagram we may have topologically equivalent phase portraits belonging to distinct parts of the parameter space. As here we have finitely many distinct parts of the parameter space, to help us identify or to distinguish phase portraits, we need to introduce some invariants and we actually choose integer--valued invariants. All of them were already used in \cite{Llibre-Schlomiuk:2004,Artes-Llibre-Schlomiuk:2006}. These integer--valued invariants yield a classification which is easier to grasp.

\begin{definition} \rm We denote by $I_{1}(S)$ the number of the real finite singular points. This invariant is also denoted by $\N_{\R,f}(S)$ \cite{Artes-Llibre-Schlomiuk:2006}.
\end{definition}

\begin{definition}\rm We denote by $I_{2}(S)$ the sum of the indices of the real finite singular points. This invariant is also denoted by $\deg (DI_f(S))$ \cite{Artes-Llibre-Schlomiuk:2006}.
\end{definition}

\begin{definition}\rm We denote by $I_{3}(S)$ the number of the real infinite singular points. This number can be $\infty$ in some cases. This invariant is also denoted by $\N_{\R,\infty}(S)$ \cite{Artes-Llibre-Schlomiuk:2006}.
\end{definition}

\begin{definition} \rm We denote by $I_{4}(S)$ the sequence of digits (each one ranging from 0 to 4) such that each digit describes the total number of global or local separatrices (different from the line of infinity) ending (or starting) at an infinite singular point. The number of digits in the sequences is 2 or 4 according to the number of infinite singular points. We can start the sequence at anyone of the infinite singular points but all sequences must be listed in a same specific order either clockwise or counter--clockwise along the line of infinity.
\end{definition}

In our case we have used the clockwise sense beginning from the saddle--node at the origin of the local chart $U_{1}$ in the pictures shown in Figs. \ref{fig:phase_a1} and \ref{fig:phase_a2}, and the origin of the local chart $U_{2}$ in the pictures shown in Fig. \ref{fig:phase_b}.

\begin{definition} \rm We denote by $I_{5}(S)$ a digit which gives the number of limit cycles.
\end{definition}

As we have noted previously in Remark \ref{rem:f-n}, we do not distinguish between phase portraits whose only difference is that in one we have a finite node and in the other a focus. Both phase portraits are topologically equivalent and they can only be distinguished within the $C^1$ class. In case we may want to distinguish between them, a new invariant may easily be introduced.

\begin{definition} \rm We denote by $I_{6}(S)$ the digit 0 or 1 to distinguish the phase portrait which has connection of separatrices outside the straight line $\{y=0\}$; we use the digit 0 for not having it and 1 for having it.
\end{definition}

\begin{definition} \rm We denote by $I_{7}(S)$ the sequence of digits (each one ranging from 0 to 3) such that each digit describes the total number of global or local separatrices ending (or starting) at a finite antisaddle.
\end{definition}

\begin{theorem} \label{t11}
Consider the subfamily $Q\overline{sn}\overline{SN}(A)$ of all quadratic systems with a finite saddle--node $\overline{sn}_{(2)}$ and an infinite saddle--node of type $\overline{\!{0\choose 2}\!\!}\ SN$ located in the direction defined by the eigenvector with null eigenvalue. Consider now all the phase portraits that we have obtained for this family. The values of the affine invariant ${\cal I}=(I_{1},I_{2},I_{3},I_{4},I_{5},I_{6})$ given in the following diagram yield a partition of these phase portraits of the family $Q\overline{sn}\overline{SN}(A)$.

Furthermore, for each value of $\cal I$ in this diagram there corresponds a single phase portrait; i.e. $S$ and $S'$ are such that $I(S)=I(S')$, if and only if $S$ and $S'$ are topologically equivalent.
\end{theorem}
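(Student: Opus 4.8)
The plan is to proceed by exhaustive verification against the bifurcation diagram already constructed in Section~\ref{sec:bifur_a}, rather than by any clever abstract argument. The statement asserts two things: first, that the six--component invariant $\cal I$ is constant on each of the (finitely many) parts of the partition of the parameter space carrying a fixed phase portrait; and second, that $\cal I$ separates the $29$ distinct phase portraits, i.e. distinct phase portraits receive distinct values of $\cal I$. The first claim follows essentially from the construction: each $I_j$ is a topological invariant (number of real finite singular points, sum of their indices, number of real infinite singular points, separatrix--configuration digit sequences, number of limit cycles, presence of a separatrix connection off $\{y=0\}$), so it is automatically constant on each topological--equivalence class, and by Theorems~\ref{th:1.1} each part of the partition carries exactly one phase portrait. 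So the real content is the injectivity: no two of the $29$ phase portraits share the same sextuple.

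The key steps I would carry out, in order, are as follows. First I would assemble the master table: for each of the $29$ representatives displayed in Figs.~\ref{fig:phase_a1} and \ref{fig:phase_a2}, read off $I_1$ through $I_6$ directly from the picture on the Poincar\'e disk, using the clockwise convention (starting at the saddle--node at the origin of $U_1$) fixed just before the theorem for $I_4$. Here I would lean on the structural facts already established: by Theorem~\ref{th:1.1}(a) the line $\{y=0\}$ is always invariant, so the infinite singular point $[1:0:0]$ is always the $\overline{\!{0\choose 2}\!\!}\,SN$ and its nature never varies; by the normal form \eqref{eqsna} the finite singular point at the origin is always the $\overline{sn}_{(2)}$; by items (iii)--(v) of Sec.~\ref{sec:basicwf} together with the presence of the invariant line, $I_5\in\{0,1\}$; and by Theorem~\ref{th:1.1}(b),(c) the phase portraits with limit cycles are exactly those in $V_{11},V_{14},1S_2$ and those with graphics exactly those in $7S_1,8S_4,1.8L_1$. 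Second, I would organize the $29$ tuples by coarsest invariant first: split by $I_3$ (number of real infinite singularities — either $2$ or $3$, or $\infty$ when an extra invariant line of singularities occurs), then within each block by $I_1$ (which ranges over the possible numbers of finite singularities — $1$, $2$ or $3$ given the double saddle--node forces $\mathbb{D}=0$), then by $I_2$, then by $I_4$, and finally use $I_5$ and $I_6$ to break the few remaining ties. Third, I would present the resulting classification as the promised diagram (a decision tree on the values of $\cal I$), checking at each leaf that exactly one phase portrait survives.

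The main obstacle I expect is not conceptual but combinatorial--visual: correctly and consistently reading the digit sequence $I_4$ (and, where needed, $I_7$, though $I_7$ is not part of $\cal I$ here) off each Poincar\'e disk, and verifying that the chosen starting point and orientation convention really does produce a well--defined sequence that coincides for topologically equivalent portraits. In particular I must check the handful of pairs of portraits that agree on $(I_1,I_2,I_3)$ — these are the cases where $I_4$, $I_5$, $I_6$ must do the separating work, and it is precisely there that $I_6$ (the presence of a separatrix connection off $\{y=0\}$, detecting the surface $(\mathcal{S}_7)$) and the limit--cycle count $I_5$ are indispensable; one must confirm that in each such pair at least one of these two digits differs. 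A secondary subtlety is the treatment of parts separated only by the black surface $(\mathcal{S}_6)$: by Remark~\ref{rem:f-n} a node turning into a focus changes no topological invariant, so such parts share both the phase portrait and the value of $\cal I$, and the diagram must be read as classifying portraits, not parts — this is consistent with the theorem's ``single phase portrait'' wording but should be stated explicitly. Once the table is complete and these tie--breaks are verified, the ``if and only if'' is immediate: equal $\cal I$ forces the same leaf of the tree hence the same portrait, and topological equivalence forces equal $\cal I$ since every $I_j$ is a topological invariant.
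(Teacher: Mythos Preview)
Your approach is essentially the same as the paper's: the authors' proof is a one-line appeal to ``the results in the previous sections and a careful analysis of the bifurcation diagrams \ldots\ the definition of the invariants $I_j$ and their explicit values for the corresponding phase portraits,'' with the actual verification encoded in Table~1, which is organized as a decision tree branching on $I_1,I_2,I_3,I_4,I_5,I_6$ in that order (not $I_3$ first as you propose, though either ordering works). One small slip to correct when you build your table: for this family $I_3\in\{1,2,\infty\}$, not $\{2,3,\infty\}$ --- the value $I_3=1$ occurs on the surface $(\mathcal{S}_5)$ where the three infinite singularities collapse to a triple point.
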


\begin{theorem} \label{t12}
Consider the subfamily $Q\overline{sn}\overline{SN}(B)$ of all quadratic systems with a finite saddle--node $\overline{sn}_{(2)}$ and an infinite saddle--node of type $\overline{\!{0\choose 2}\!\!}\ SN$ located in the direction defined by the eigenvector with non--null eigenvalue. Consider now all the phase portraits that we have obtained for this family. The values of the affine invariant ${\cal I}=(I_{1},I_{2},I_{3},I_{4},I_{7})$ given in the following diagram yield a partition of these phase portraits of the family $Q\overline{sn}\overline{SN}(B)$.

Furthermore, for each value of $\cal I$ in this diagram there corresponds a single phase portrait; i.e. $S$ and $S'$ are such that $I(S)=I(S')$, if and only if $S$ and $S'$ are topologically equivalent.
\end{theorem}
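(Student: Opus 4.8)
The plan is to prove this by explicitly going through the list of phase portraits obtained for the family $Q\overline{sn}\overline{SN}(B)$, computing for each the value of the affine invariant ${\cal I}=(I_{1},I_{2},I_{3},I_{4},I_{7})$, and verifying the two assertions: that the map $S\mapsto{\cal I}(S)$ has constant value on each topological equivalence class, and that it separates the classes. Since there are only $16$ distinct phase portraits and finitely many parameter regions, this is in principle a finite check; the structure of the proof mirrors the corresponding argument in \cite{Artes-Llibre-Schlomiuk:2006}, to which we refer for the general philosophy.

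First I would fix, for each of the $16$ phase portraits shown in Fig. \ref{fig:phase_b}, a concrete representative set of parameters $(h,\l,m)$ drawn from the appropriate part of the bifurcation diagram (using the labelling of Figs. \ref{sliceh0B_labels} and \ref{sliceh1B_labels}), and read off the data of the global phase portrait obtained with P4. From this data each component of ${\cal I}$ is computed directly: $I_{1}$ counts the real finite singularities (always at least one, the saddle--node at the origin; at most three by the remark in the introduction); $I_{2}$ is the sum of their indices, computed from the local classification of each point (saddle--node has index $0$, nodes and foci $+1$, saddles $-1$), also using Proposition \ref{th2.19} at the origin; $I_{3}$ counts the real infinite singularities, where one always has the $\overline{\!{0\choose 2}\!\!}\,SN$ at the origin of $U_{2}$ coming from the normal form \eqref{eqsnb}, plus whatever else survives on the equator, read from \eqref{ja4} and \eqref{26}--\eqref{27}; $I_{4}$ is the separatrix--count sequence along the line at infinity, read clockwise starting from the infinite saddle--node in $U_{2}$ as stipulated; and $I_{7}$ is the separatrix--count sequence at the finite antisaddles. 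I would record all of this in a table, one row per part of the partition, and observe that parts differing only by a piece of the black surface (${\cal S}_{6}$) (node vs.\ focus) give the same ${\cal I}$, consistent with Remark \ref{rem:f-n}.

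The two directions of the ``if and only if'' are then handled separately. For the forward direction — equal invariants imply topological equivalence — I would inspect the table and check that whenever two parts yield the same value of ${\cal I}$, the explicit phase portraits are in fact homeomorphic by an orientation--preserving (or, if necessary, orientation--reversing) homeomorphism of the Poincar\'e disk carrying orbits to orbits; here the geometric features already encoded in ${\cal I}$ (number and index of finite points, infinite configuration, separatrix connections via the presence/absence data and the invariant line $\{x=0\}$ guaranteed by Theorem \ref{th:1.2}(a)) pin down the portrait, and the special degenerate cases — the integrable center in $2S_{1}$ and the integrable saddle in $2S_{2}$ of Theorem \ref{th:1.2}(c)--(d), and the four graphics listed in Theorem \ref{th:1.2}(b) — have to be inspected individually since they carry extra structure not visible to ${\cal I}$ but which does not create new classes. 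For the converse — distinct portraits get distinct invariants — I would simply point, for each pair of non--equivalent portraits in the list, to a component of ${\cal I}$ on which they differ; this is the ``diagram'' of values referred to in the statement, organized as a decision tree branching first on $I_{1}$, then $I_{2}$, then $I_{3}$, then $I_{4}$, then $I_{7}$.

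The main obstacle is the verification that the explicitly computed portraits are genuinely topologically inequivalent whenever ${\cal I}$ distinguishes them and genuinely equivalent whenever it does not — in other words, that ${\cal I}$ is neither too coarse nor accidentally finer than topological type on this particular finite list. The delicate subcases are those where two regions share all of $I_{1},I_{2},I_{3}$ and the infinite--separatrix data $I_{4}$, and one must appeal to $I_{7}$ (the finite--antisaddle separatrix structure) to separate them, or conversely where a node--to--focus transition across (${\cal S}_{6}$) or a purely $C^{\infty}$ crossing of (${\cal S}_{8}$) (drawn dashed) must be recognized as not producing a new topological class; here one leans on items \eqref{item_iv}--\eqref{item_v} of Sec. \ref{sec:basicwf} to rule out hidden limit cycles (so that $I_{5}$ need not even enter ${\cal I}$ for family $(B)$) and on the rigidity forced by the invariant straight line $\{x=0\}$. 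Once every entry of the table is checked against its neighbours in the bifurcation diagram and the coherence established in Sec. \ref{sec:islands} is invoked to exclude islands, the partition claimed in the theorem follows.
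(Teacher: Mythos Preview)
Your proposal is correct and follows essentially the same approach as the paper's own proof, which is a terse one--sentence appeal to the bifurcation analysis of Sec.~\ref{sec:bifur_b}, the definitions of the $I_j$, and the explicit values recorded in Table~3 (the decision tree) and Table~4 (the equivalences); you have simply spelled out in more detail what that finite check actually entails. One minor slip: you refer to a $C^\infty$ crossing of the surface $({\cal S}_8)$, but in family $(B)$ there is no $({\cal S}_8)$ --- that surface only appears for family $(A)$ --- so that clause should be dropped.
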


The bifurcation diagram for $Q\overline{sn}\overline{SN}(A)$ has 66 parts which produce 29 topologically different phase portraits as described in Table 1. The remaining 37 parts do not produce any new phase portrait which was not included in the 29 previous ones. The difference is basically the presence of a strong focus instead of a node and vice versa.

Similarly, the bifurcation diagram for $Q\overline{sn}\overline{SN}(B)$ has 30 parts which produce 16 topologically different phase portraits as described in Table 3. The remaining 14 parts do not produce any new phase portrait which was not included in the 16 previous ones. The phase portraits are basically different to each other under some algebro--geometric features related to the position of the orbits.

The phase portraits having neither limit cycle nor graphic have been denoted surrounded by parenthesis, for example $(V_{1})$ (in Tables 1 and 3); the phase portraits having one limit cycle have been denoted surrounded by brackets, for example $[V_{11}]$ (in Table 1); the phase portraits having one graphic have been denoted surrounded by $\{\}$, for example $\{7S_{1}\}$ (in Table 1).

\begin{proof}
The above two results follow from the results in the previous sections and a careful analysis of the bifurcation diagrams given in Secs. \ref{sec:bifur_a} and \ref{sec:bifur_b}, in Figs. \ref{sliceh0A_labels}, \ref{sliceh1A_purple_labels}, \ref{sliceh0B_labels} and \ref{sliceh1B_labels}, the definition of the invariants $I_{j}$ and their explicit values for the corresponding phase portraits.
\end{proof}

We first make some observations regarding the equivalence relations used in this paper: the affine and time rescaling, $C^1$ and topological equivalences.

The coarsest one among these three is the topological equivalence and the finest is the affine equivalence. We can have two systems which are topologically equivalent but not $C^1$--equivalent. For example, we could have a system with a finite antisaddle which is a structurally stable node and in another system with a focus, the two systems being topologically equivalent but belonging to distinct $C^1$--equivalence classes, separated by the surface ${\cal S}_{6}$ on which the node turns into a focus.

In Table 2 (Table 3, respectively) we listed in the first column 29 parts (16 parts, respectively) with all the distinct phase portraits of Figs. \ref{fig:phase_a1} and \ref{fig:phase_a2} (Fig. \ref{fig:phase_b}, respectively). Corresponding to each part listed in column 1 we have in its horizontal block, all parts whose phase portraits are topologically equivalent to the phase portrait appearing in column 1 of the same horizontal block.

In the second column we have put all the parts whose systems yield topologically equivalent phase portraits to those in the first column, but which may have some algebro--geometric features related to the position of the orbits.

In the third (respectively, fourth, and fifth) column we list all parts whose phase portraits have another antisaddle which is a focus (respectively, a node which is at a bifurcation point producing foci close to the node in perturbations, a node--focus to shorten, and a finite weak singular point). In the sixth column of Table 1 we list all phase portraits which have a triple infinite singularity.

Whenever phase portraits appear on a horizontal block in a specific column, the listing is done according to the decreasing dimension of the parts where they appear, always placing the lower dimensions on lower lines.

\subsection {Proof of the main theorem}

The bifurcation diagram described in Sec. \ref{sec:bifur_a}, plus Table 1 of the geometrical invariants distinguishing the 29 phase portraits, plus Table 2 giving the equivalences with the remaining phase portraits lead to the proof of the main statement of Theorem \ref{th:1.1}. Analogously, we have the proof of Theorem \ref{th:1.2}, but considering the description in Sec. \ref{sec:bifur_b} and Tables 3 and 4.

To prove statements \emph{(c)} and \emph{(d)} of Theorem \ref{th:1.2} we recall the Main Theorem of \cite{Vulpe:2011} and verify that:
    \begin{enumerate}[(i)]
        \item  Any representative of $2S_{1}$ is such that $h>0$, $\l=0$ and $m>1/2$. Then, we have: $\mathcal{T}_{4}=0$, $\mathcal{T}_{3}=8h^{2}(-1+2m) \neq 0$, $\mathcal{T}_{3} \mathcal{F}=-8h^{4}(-1+2m)^{3}<0$, $\mathcal{F}_{1} = \mathcal{F}_{2} = \mathcal{F}_{3} \mathcal{F}_{4} = 0$, which imply that it has an integrable center $c$;
        \item Any representative of $2S_{2}$ is such that $h>0$, $\l=0$ and $m<1/2$. Then, we have: $\mathcal{T}_{4}=0$, $\mathcal{T}_{3}=8h^{2}(-1+2m) \neq 0$, $\mathcal{T}_{3} \mathcal{F}=-8h^{4}(-1+2m)^{3}>0$, $\mathcal{F}_{1} = \mathcal{F}_{2} = \mathcal{F}_{3} \mathcal{F}_{4} = 0$, which imply that it has an integrable saddle $\is$.
    \end{enumerate}

\onecolumn
\begin{center}
{\sc Table 1:} Geometric classification for the subfamily $Q\overline{sn}\overline{SN}(A)$.
{
\[
I_{1}= \left\{ \ba{ll}
\mbox{$3$ \& } I_{2} = \left\{ \ba{l}
\mbox{$2$ \& $I_{3}=$} \left\{ \ba{l}

\mbox{$2$ \& $I_{4}=$} \left\{ \ba{l}
\mbox{$2111$ \& $I_{5}=$} \left\{ \ba{l}
\mbox{$1$} \,\, [V_{14}], \\
\mbox{$0$} \,\, (V_{12}), \ea \right. \\
\mbox{$1111$ \& $I_{6}=$} \left\{ \ba{l}
\mbox{$1$} \,\, \{8S_{4}\}, \\
\mbox{$0$} \,\, (V_{15}), \ea \right. \\
\ea \right. \\

\mbox{$1$} \, \, (5S_{3}) \\
\ea \right.\\

\mbox{$0$ \& $I_{3}=$} \left\{ \ba{l}

\mbox{$2$ \& $I_{4}=$} \left\{ \ba{l}
\mbox{$4111$ \& $I_{5}=$} \left\{ \ba{l}
\mbox{$1$} \,\, [V_{11}], \\
\mbox{$0$} \,\, (V_{9}), \ea \right. \\
\mbox{$2111$ \& $I_{6}=$} \left\{ \ba{l}
\mbox{$1$} \,\, (8S_{1}), \\
\mbox{$0$} \,\, (V_{6}), \ea \right. \\
\mbox{$3112$} \,\, (V_{3}), \\
\mbox{$2120$} \,\, (V_{16}), \\
\mbox{$3111$} \,\, \{7S_{1}\}, \\ \ea \right. \\

\mbox{$1$} \, \, (5S_{2}) \\
\ea \right.\\
\ea \right. \\

\mbox{$2$ \& } I_{2} = \left\{ \ba{l}
\mbox{$1$ \& $I_{3}=$} \left\{ \ba{l}

\mbox{$\infty$} \, \, (1.2L_{2}) \\

\mbox{$2$ \& $I_{4}=$} \left\{ \ba{l}
\mbox{$2111$ \& $I_{5}=$} \left\{ \ba{l}
\mbox{$1$} \,\, [1S_{2}], \\
\mbox{$0$} \,\, (1S_{4}), \ea \right. \\
\mbox{$1111$ \& $I_{6}=$} \left\{ \ba{l}
\mbox{$1$} \,\, \{1.8L_{1}\}, \\
\mbox{$0$} \,\, (1S_{1}), \ea \right. \\
\mbox{$2110$} \,\, (1S_{5}), \\ \ea \right. \\
\ea \right.\\

\mbox{$0$ \& $I_{3}=$} \left\{ \ba{l}

\mbox{$2$ \& $I_{4}=$} \left\{ \ba{l}
\mbox{$2111$ \& $I_{6}=$} \left\{ \ba{l}
\mbox{$1$} \,\, (3.8L_{1}), \\
\mbox{$0$} \,\, (3S_{4}), \ea \right. \\
\mbox{$3112$} \,\, (3S_{1}), \\
\mbox{$4111$} \,\, (3S_{2}), \\
\mbox{$2120$} \,\, (3S_{3}), \\
\mbox{$3111$} \,\, (2.3L_{1}), \\ \ea \right. \\

\mbox{$1$} \,\, (3.5L_{1}), \\
\ea \right.\\
\ea \right. \\

\mbox{$1$ \& } I_{3} = \left\{ \ba{l}

\mbox{$\infty$} \, \, (P_{1}), \\

\mbox{$2$} \, \, (V_{1}), \\

\mbox{$1$} \, \, (5S_{1}). \\
\ea \right. \\
    \ea \right.%
\]
}
\end{center}
\twocolumn

\onecolumn
\begin{center}
{\sc Table 2:} Topological equivalences for the subfamily $Q\overline{sn}\overline{SN}(A)$. \linebreak
\begin{tabular}{cccccc}
\hline
Presented & Identical       & Finite      & Finite       & Finite   & Triple \\
phase     & under           & antisaddle  & antisaddle   & weak     & infinite\\
portrait  & perturbations   & focus       & node--focus  & point    & point \\
\hline
\multirow{2}{*}{$V_{1}$} & $V_{2}$ & & & & \\
        & & & & & $1.3L_{1}$ \\
\hline
\multirow{2}{*}{$V_{3}$} & $V_{4}$, $V_{5}$ & & & & \\
        & & & $6S_{2}$ & $2S_{2}$ & \\
\hline
\multirow{2}{*}{$V_{6}$} & $V_{18a}$, $V_{18b}$ & $V_{7}$, $V_{8}$, $V_{19}$, $V_{20}$ & & &\\
        & $8S_{5}$ & & $6S_{1}$, $6S_{7}$ & $2S_{1}$, $2S_{5}$ &\\
\hline
\multirow{2}{*}{$V_{9}$} & & $V_{10}$ & & &\\
        & & & $6S_{3}$ & $2S_{3}$ &\\
\hline
$V_{11}$ & & & & &\\
\hline
\multirow{2}{*}{$V_{12}$} & & $V_{13}$ & & &\\
         & & & $6S_{4}$ & $2S_{4}$ &\\
\hline
$V_{14}$ & & & & &\\
\hline
\multirow{2}{*}{$V_{15}$} & $V_{21}$ & $V_{17}$, $V_{22}$ & & &\\
         & & & $6S_{5}$, $6S_{6}$ & &\\
\hline
$V_{16}$ & & & & &\\
\hline
$1S_{1}$ & $1S_{6}$ & & & &\\
\hline
$1S_{2}$ & & & & &\\
\hline
\multirow{2}{*}{$1S_{4}$} & & $1S_{3}$ & & &\\
         & & $1.6L_{1}$ & & $1.2L_{1}$ &\\
\hline
$1S_{5}$ & & & & &\\
\hline
$3S_{1}$ & & & & &\\
\hline
$3S_{2}$ & & & & &\\
\hline
$3S_{3}$ & & & & &\\
\hline
$3S_{4}$ & & & & &\\
\hline
$5S_{1}$ & & & & &\\
\hline
$5S_{2}$ & & & & &\\
\hline
$5S_{3}$ & & & & &\\
\hline
$7S_{1}$ & & & & &\\
\hline
\multirow{2}{*}{$8S_{1}$} & & $8S_{2}$, $8S_{3}$ & & &\\
         & & & $6.8L_{1}$ & $2.8L_{1}$ &\\
\hline
$8S_{4}$ & & & & &\\
\hline
$1.2L_{2}$ & & & & &\\
\hline
$1.8L_{1}$ & & & & &\\
\hline
$2.3L_{1}$ & & & & &\\
\hline
$3.5L_{1}$ & & & & &\\
\hline
$3.8L_{1}$ & $3.8L_{2}$ & & & &\\
\hline
$P_{1}$ & & & & &\\
\hline
\end{tabular}
\end{center}
\twocolumn

\onecolumn
\begin{center}
{\sc Table 3:} Geometric classification for the subfamily $Q\overline{sn}\overline{SN}(B)$.
{
\[
I_{1}= \left\{ \ba{ll}
\mbox{$3$ \& } I_{2} = \left\{ \ba{l}
\mbox{$2$ \& $I_{3}=$} \left\{ \ba{l}

\mbox{$2$ \& $I_{4}=$} \left\{ \ba{l}
\mbox{$1111$ \& $I_{7}=$} \left\{ \ba{l}
\mbox{$32$} \,\, (V_{7}), \\
\mbox{$31$} \,\, (V_{6}), \\
\mbox{$20$} \,\, \{2S_{1}\}, \ea \right. \\
\mbox{$1121$} \, \, (V_{3}), \\
\ea \right. \\

\mbox{$1$} \, \, (5S_{1}) \\
\ea \right.\\

\mbox{$0$ \& $I_{3}=$} \left\{ \ba{l}

\mbox{$2$ \& $I_{4}=$} \left\{ \ba{l}
\mbox{$2120$} \,\, (V_{2}), \\
\mbox{$1121$} \,\, (V_{1}), \\ \ea \right. \\

\mbox{$1$} \, \, (5S_{3}) \\
\ea \right.\\
\ea \right. \\

\mbox{$2$ \& } I_{2} = \left\{ \ba{l}
\mbox{$1$ \& $I_{3}=$} \left\{ \ba{l}

\mbox{$\infty$} \, \, (1.2L_{1}) \\

\mbox{$2$ \& $I_{4}=$} \left\{ \ba{l}
\mbox{$1120$} \,\, (1S_{2}), \\
\mbox{$1111$} \,\, (1S_{1}), \\
\ea \right. \\
\ea \right.\\
\ea \right. \\

\mbox{$1$ \& $I_{3}=$} \left\{ \ba{l}

\mbox{$\infty$} \, \, (P_{1}), \\

\mbox{$2$ \& $I_{4}=$} \left\{ \ba{l}
\mbox{$2121$} \, \, \{1S_{4}\}, \\
\mbox{$1111$} \, \, \{1.2L_{2}\}, \\
\mbox{$1011$} \, \, (1S_{3}), \\
\ea \right. \\

\mbox{$1$} \, \, \{1.5L_{1}\}. \\
\ea \right.\\
    \ea \right.%
\]
}
\end{center}
%
\begin{center}
{\sc Table 4:} Topological equivalences for the subfamily $Q\overline{sn}\overline{SN}(B)$. \linebreak
\begin{tabular}{ccccc}
\hline
Presented & Identical       & Finite      & Finite       & Finite \\
phase     & under           & antisaddle  & antisaddle   & weak   \\
portrait  & perturbations   & focus       & node--focus  & point \\
\hline
\multirow{2}{*}{$V_{1}$} & $V_{9}$ & & & \\
        & & & & $2S_{2}$ \\
\hline
$V_{2}$ & & & & \\
\hline
\multirow{2}{*}{$V_{3}$} & & $V_{4}$ & & \\
        & & & $6S_{3}$ & \\
\hline
\multirow{2}{*}{$V_{6}$} & & $V_{5}$ & & \\
        & & & $6S_{1}$ & \\
\hline
\multirow{2}{*}{$V_{7}$} & & $V_{8}$ & & \\
        & & & $6S_{2}$ & \\
\hline
$1S_{1}$ & & & & \\
\hline
$1S_{2}$ & & & & \\
\hline
\multirow{2}{*}{$1S_{3}$} & $1S_{6}$ & & & \\
         & & & & $1.2L_{3}$ \\
\hline
$1S_{4}$ & $1S_{5}$ & & & \\
\hline
$2S_{1}$ & & & & \\
\hline
\multirow{2}{*}{$5S_{1}$} & & $5S_{2}$ & & \\
         & & & $5.6L_{1}$ & \\
\hline
$5S_{3}$ & & & & \\
\hline
$3S_{1}$ & & & & \\
\hline
$1.2L_{1}$ & & & & \\
\hline
$1.2L_{2}$ & & & & \\
\hline
$1.5L_{1}$ & $1.5L_{2}$ & & & \\
\hline
$P_{1}$ & & & & \\
\hline
\end{tabular}
\end{center}
\twocolumn


\noindent\textbf {Acknowledgements.} The first author is partially supported by a MEC/FEDER grant number MTM 2008--03437 and by a CICYT grant number 2005SGR 00550, the second author is supported by CAPES/DGU grant number BEX 9439/12--9 and the last author is partially supported by CAPES/DGU grant number 222/2010 and by FP7--PEOPLE--2012--IRSES--316338.

%
%
%

\newcommand{\journal}[6]{#1 [#5] ``#2,'' \emph{#3} {\bf #4}, #6.}

\end{document}